\documentclass{article}

\usepackage{authblk}

\usepackage[utf8]{inputenc}
\usepackage[T1]{fontenc}
\usepackage[english]{babel}
\usepackage{textcomp}
\usepackage{amsmath,amssymb,amsthm}
\usepackage{lmodern}
\usepackage[a4paper]{geometry}
\usepackage{xcolor, pict2e}
\usepackage{microtype}
\usepackage{caption}
\usepackage{listings}
\usepackage{multicol}
\usepackage{moreverb}
\usepackage{hyperref}
\hypersetup{pdfstartview=XYZ}
\usepackage{wrapfig}
\usepackage[sans]{dsfont}

\usepackage{tikz, pgfplots}
\usetikzlibrary{positioning}
\usepackage{amsmath,amssymb}
\usetikzlibrary{decorations.pathreplacing}

\usepackage{ytableau}
\usepackage{mathdots}

\usepackage{hyperref}
\hypersetup{
    colorlinks=true,
    linkcolor=blue,
    citecolor=magenta,
    urlcolor=blue,
    pdfborder={0 0 0}
}

\newcommand{\Poiss}{\ensuremath{\textup{Poiss}}}
\newcommand{\dtv}{\ensuremath{\textup{d}_\textup{TV}}}
\newcommand{\RI}{\ensuremath{\textup{RI}}}
\newcommand{\TRI}{\ensuremath{\textup{TRI}}}

\newcommand{\Inv}{\ensuremath{\textup{Inv}}}
\newcommand{\tmix}{\ensuremath{\text{t}_{\textup{mix}}}}

\newcommand{\ch}{\ensuremath{\textup{ch}}}
\newcommand{\Id}{\ensuremath{\textup{Id}}}

\newcommand{\Unif}{\ensuremath{\textup{Unif}}}
\newcommand{\Bin}{\ensuremath{\textup{Bin}}}

\newcommand{\rsym}{\ensuremath{r}_\textup{sym}}

\newcommand{\kS}{{\ensuremath{\mathfrak{S}}} }
\newcommand{\kA}{{\ensuremath{\mathfrak{A}}} }
\newcommand{\kE}{{\ensuremath{\mathfrak{E}}} }

\newcommand{\kY}{{\ensuremath{\mathfrak{Y}}} }

\newcommand{\Tr}{{\ensuremath{\textup{Tr}}} }
\newcommand{\RT}{{\ensuremath{\textup{RT}}} }
\newcommand{\pRT}{{\ensuremath{\textup{pRT}}} }
\newcommand{\T}{{\ensuremath{\textup{T}}} }
\newcommand{\height}{{\ensuremath{\textup{ht}}} }
\newcommand{\supp}{{\ensuremath{\textup{supp}}} }
\newcommand{\Conj}{{\ensuremath{\textup{Conj}}} }

\newcommand{\sgn}{{\ensuremath{\textup{sgn}}}}
 
\usepackage[font=sf, labelfont={sf,bf}, margin=1cm]{caption}
\usepackage{enumitem}
\setlist[itemize,1]{nosep}
\setlist[enumerate,1]{itemsep=0pt,label=(\alph*)}

\usepackage{float}
\usepackage[caption = false]{subfig}
\usepackage{graphicx}

\newtheorem*{theorem*}{Theorem}
\newtheorem{theorem}{Theorem}[section]
\newtheorem{proposition}[theorem]{Proposition}
\newtheorem{lemma}[theorem]{Lemma}
\newtheorem{corollary}[theorem]{Corollary}

\theoremstyle{remark}
\newtheorem{remark}{Remark}[section]

\usepackage{todonotes}

\newcommand{\cC}{{\ensuremath{\mathcal C}} }

\newcommand{\cT}{{\ensuremath{\mathcal T}} }

\newcommand{\bbE}{{\ensuremath{\mathbb E}} }
\newcommand{\bbN}{{\ensuremath{\mathbb N}} }
\newcommand{\bbZ}{{\ensuremath{\mathbb Z}} }

\newcommand{\bbR}{{\ensuremath{\mathbb R}} }

\newcommand{\bbP}{{\ensuremath{\mathbb P}} }

\newcommand{\ag}{\left\{ }

\newcommand{\ad}{\right\} }

\newcommand{\cg}{\left[}
\newcommand{\cd}{\right]}
\newcommand{\pg}{\left(}
\newcommand{\pd}{\right)}
\newcommand{\bg}{\left|}
\newcommand{\bd}{\right|}
\newcommand{\lf}{\left\lfloor}
\newcommand{\rf}{\right\rfloor}
\newcommand{\lc}{\left\lceil}
\newcommand{\rc}{\right\rceil}

\newcommand{\du}{{\ensuremath{\;:\;}}}

\makeatletter
\newcommand*\bigcdot{\mathpalette\bigcdot@{.5}}
\newcommand*\bigcdot@[2]{\mathbin{\vcenter{\hbox{\scalebox{#2}{$\m@th#1\bullet$}}}}}
\makeatother

\numberwithin{equation}{section}

\pgfplotsset{compat=1.18}

\begin{document}

\renewcommand{\theparagraph}{\thesubsection.\arabic{paragraph}} 
\title{Cutoff profiles for conjugacy invariant random walks on symmetric groups}

\author{Lucas Teyssier}
\affil{Université de Lorraine, \texttt{lucas.teyssier@univ-lorraine.fr}}

\maketitle
\begin{abstract}
We prove asymptotic equivalents of characters for finite-level representations of symmetric groups, that is, for Young diagrams which have all but finitely many boxes on their first row. The proofs rely on computing the number of ribbon tableaux of different types, which allows us to estimate characters via the Murnaghan--Nakayama rule. 

We deduce that random walks on symmetric groups generated by conjugacy classes with a macroscopic number of fixed points have a Poissonian cutoff profile. We also prove that the random involution walk exhibits cutoff and find its cutoff profile. Finally, we obtain numerics for the random transposition walk on a deck of 52 cards, giving concrete estimates on the question that originally motivated Diaconis and Shahshahani.
\end{abstract}

\tableofcontents

\section{Introduction}
\subsection{Some history}
Let $n\geq 2$ and let $\kS_n$ be the symmetric group of index $n$. Given a probability measure $\mu$ on $\kS_n$, we refer to the discrete time Markov chain with transition probabilities $P(\sigma, \tau) = \mu(\sigma^{-1}\tau)$ for $\sigma, \tau \in \kS_n$ as the \textit{random walk driven by $\mu$} or \textit{$\mu$-walk}.
Denote the uniform measure on a finite set $S$ by $\Unif_S$, and the conjugacy class of transpositions of $\kS_n$ by $\cT$.

The $\Unif_{\cT}$-walk (with laziness $1/n$) is a historically important model known as the random transposition shuffle.
In terms of card shuffling, this corresponds to picking at each step two cards independently and uniformly at random, and swapping them. The initial problem that emerged at the Bell Laboratories in the 1970's was to understand how many random transpositions are needed to mix a deck of 52 cards, and we refer to the conversation between Aldous and Diaconis \cite{Aldous2013AnotherConversationWithPersiDiaconis} for a more detailed history of the problem.

After initial attempts relying purely on probability failed, Diaconis and Shahshahani \cite{DiaconisShahshahani1981} solved this problem (asymptotically) using representation theory. They proved that a phase transition, called cutoff, occurs around $t_n := \frac{1}{2}n\ln n$ steps. Essentially, what they proved is that for any $\varepsilon\in (0,1)$, denoting the distribution of the walk after $t$ steps by $\mu_{t}$, we have
\begin{equation}\label{eq: Diaconis--Shahshahani cutoff}
        \dtv\pg \mu_{\lf (1-\varepsilon) t_n\rf}, \Unif_{\kS_n}\pd \xrightarrow[n\to\infty]{} 1  \quad \text{ and } \quad   \dtv\pg \mu_{\lc (1+\varepsilon) t_n\rc}, \Unif_{\kS_n}\pd \xrightarrow[n\to\infty]{}  0,
    \end{equation}
where the total variation distance between two probability measures $\mu$ and $\nu$ on a finite set $S$ is given by $\dtv(\mu,\nu) = \frac{1}{2}\sum_{x\in S}|\mu(x) - \nu(x)|$.

This led to several refinements and generalizations, and in particular to $\Unif_{\cC}$-walks, where $\cC$ is a conjugacy class:
\cite{Roussel2000, BerestyckiSchrammZeitouni2011} 
for conjugacy classes with small support,
\cite{Hough2016, BerestyckiSengul2019} for conjugacy classes of support size $o(n)$, \cite{Lulovthesis1996, LarsenShalev2008, TeyssierThevenin2025virtualdegreeswitten} for fixed point free permutations, \cite{Roichman1996, MullerSchlagePuchta2007precutoff} for general $L^2$ bounds (that are however weaker than cutoff). Recently, in a companion paper, we proved the $L^2$ cutoff for all conjugacy classes with $\Theta(n)$ fixed points \cite{Olesker-TaylorTeyssierThevenin2025sharpboundsandcutoff}. The cutoff result of Diaconis and Shahshahani was also generalized to several models of non-uniform transpositions \cite{Diaconis1988StFlour,Lacoin2016cutoffadjacent, BateConnorMatheau-Raven2021,NestoridiYan2025BiasedTranspositions, ArfaeeNestoridi2025JucysMurphyTranspositions}. 

\medskip

What happens within the phase transition is also well understood for transpositions. In \cite{Teyssier2020}, we found the \textit{cutoff profile}: for any $a\in \bbR$, we have
\begin{equation}\label{eq:convergence profile transpositions}
    \dtv\pg \mu_{\lf t_n(1+a/\ln n)\rf} , \Unif_{\kS_n}\pd \xrightarrow[n\to \infty]{} \dtv\pg \Poiss(1), \Poiss\pg 1+e^{-a} \pd \pd,
\end{equation}
where $\Poiss(\alpha)$ denotes the Poisson law of parameter $\alpha>0$.
The convergence in \eqref{eq:convergence profile transpositions} was extended to cycles of length $o(n)$ in \cite{NestoridiOlesker-Taylor2022limitprofiles}, and to models that are not conjugacy invariant, such as star transpositions \cite{Nestoridi2024comparisonstar}. Intuitively, the profile involves Poisson laws because the last \textit{observable} to be mixed is the number of fixed points. 
This intuition was recently made rigorous for transpositions  \cite{JainSawhney2026transpositionprofileotherproof}, and generalized to decks with repeated cards \cite{Chen2026kcyclesrepeatedcards}. The separation profile for transpositions (and for other models) was also recently found \cite{FrancisNestoridi2026separation}.

Cutoff profiles have also been found in several other contexts: for instance for the hypercube \cite{DiaconisGrahamMorrison1990hypercube}, the riffle shuffle \cite{BayerDiaconis1992}, repeated averages \cite{ChatterjeeDiaconisSlyZhang2022repeatedaverages}, the symmetric and asymmetric simple exclusion processes \cite{Lacoin2016cutoffprofileexclusioncircle, BufetovNejjar2022}. Many other cutoff profiles have been found in the last few years. Recent accounts of the literature on cutoff profiles can be found in \cite{Olesker-TaylorSchmid2026ProfileBernoulliLaplace, Teyssier2026EveryCutoffProfileIsPossible}.

\subsection{Main results}

The main novelty of the paper consists of Theorems~\ref{thm: character bound finite level general with M sigma} and~\ref{thm: character estimate many fixed points with error term}, which give precise asymptotics for finite-level characters of symmetric groups. Our main application is Theorem~\ref{thm: profil TV pour les CC}, which finds the profile for conjugacy classes whose cycle structure has $\Theta(n)$ fixed points. We defer representation theoretic statements (and in particular those of Theorems~\ref{thm: character bound finite level general with M sigma} and~\ref{thm: character estimate many fixed points with error term}) to Section~\ref{s: Asymptotics of low-level characters}.

\medskip

Given a probability measure $\mu$ on $\kS_n$, the distribution of the $\mu$-walk (started at the identity permutation $\Id$) after $t$ steps is given by the $t$-fold convolution product $\mu^{*t}$, where the convolution product of two functions $f,g \du \kS_n \to \bbR$ is given by $(f \ast g)(\sigma)  = \sum_{\tau \in \kS_n} f(\tau)g(\tau^{-1}\sigma)$ for $\sigma \in \kS_n$.

Denote the set of conjugacy classes of $\kS_n$ by $\Conj(\kS_n)$, and set $\Conj^*(\kS_n) = \Conj(\kS_n)\backslash \ag \ag \Id \ad \ad$.
If $\cC\in \Conj(\kS_n)$, $\sigma \in \cC$, and $F$ is a class function, we may write $F(\cC)$ for $F(\sigma)$; for instance $\sgn(\cC)$ and $\sgn(\sigma)$ both denote the sign of the permutation $\sigma$, and for $i\geq 1$,
$f_i(\cC)$ and $f_i(\sigma)$ denote the number of $i$-cycles of $\sigma$.
For $\cC\in \Conj^*(\kS_n)$ such that $1\leq f_1(\cC)\leq n-1$,
we also set
\begin{equation}
    t_{\cC} = \frac{\ln n}{\ln(n/f_1(\cC))}.
\end{equation}

Given a conjugacy class $\cC$ of $\kS_n$ and an integer $t\geq 0$, we set $\kE(\cC,t) = \kS_n \backslash\kA_n$ if both $t$ and $\cC$ are odd (meaning for the second one that the sign of $\cC$ is $-1$), and $\kE(\cC,t) = \kA_n$ otherwise.

In \cite{Olesker-TaylorTeyssierThevenin2025sharpboundsandcutoff}, we proved that random walks associated to conjugacy classes $\cC$ with $\Theta(n)$ fixed points have an $L^2$ cutoff at $t_\cC$. 
The next statement shows that they furthermore have a Poissonian cutoff profile, as conjectured (for conjugacy classes with $o(n)$ fixed points) in \cite[Section~1.2]{Teyssier2020} and \cite[Conjecture~6.1]{Nestoridi2024comparisonstar}.

\begin{theorem}\label{thm: profil TV pour les CC}
Let $\delta>0$.
For each $n$, let $ \cC^{(n)} \in\Conj^*(\kS_n)$. Assume that $f_1(\cC^{(n)})\geq \delta n$ for $n$ large enough. Let  $a\in \bbR$.
Let $S\subset \bbN^*$ and $(t_n)_{n\in S}$ be a sequence of integers such that $t_n = t_{\cC^{(n)}} \pg 1 + \frac{a+o(1)}{\ln n} \pd $. 
As $S\ni n\to \infty$, we have 
    \begin{equation}
\dtv(\Unif_{\cC^{(n)}}^{*t_n}, \Unif_{\kE({\cC^{(n)}},t_n)})  \to \dtv\pg \Poiss(1), \Poiss\pg 1+e^{-a}\pd \pd.
    \end{equation}
\end{theorem}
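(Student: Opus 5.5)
We follow the strategy of \cite{Teyssier2020}: compare the walk with an explicit ``model'' measure whose total variation distance to uniform is computable, and control the difference in $L^2$ (or via Fourier on a restricted set). Write $\mu_n = \Unif_{\cC^{(n)}}^{*t_n}$ and $\kE_n=\kE(\cC^{(n)},t_n)$. By Fourier inversion,
\begin{equation*}
\mu_n(\sigma) = \frac{1}{n!}\sum_{\lambda\vdash n} d_\lambda\, \pg \frac{\ch^\lambda(\cC^{(n)})}{d_\lambda}\pd^{t_n} \ch^\lambda(\sigma),
\end{equation*}
and the trivial and sign representations account exactly for $\Unif_{\kE_n}$. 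Split the remaining $\lambda$ (and their conjugates $\lambda'$) into \emph{finite-level} diagrams, where $n-\lambda_1\le K$ for a large fixed $K$, and the rest. For the rest, the $L^2$ cutoff bounds of \cite{Olesker-TaylorTeyssierThevenin2025sharpboundsandcutoff} give $\sum_{\lambda:\, n-\lambda_1>K,\, n-\lambda_1'>K} d_\lambda^2 |\ch^\lambda(\cC)/d_\lambda|^{2t_n} \le \varepsilon(K)$ with $\varepsilon(K)\to 0$ as $K\to\infty$, uniformly in $n$, at times $t_{\cC}(1+(a+o(1))/\ln n)$ for $a$ fixed. Indeed, each level $k$ contributes roughly $e^{-ak}/k!$-type terms, and the deep levels are handled by the existing character bounds. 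By Cauchy--Schwarz this part changes the total variation by $O(\sqrt{\varepsilon(K)})$.

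For finite levels $\lambda=(n-k,\mu)$ with $|\mu|=k\le K$, use Theorems~\ref{thm: character bound finite level general with M sigma} and~\ref{thm: character estimate many fixed points with error term}. They should give
\begin{equation*}
\frac{\ch^\lambda(\cC)}{d_\lambda} = \pg \frac{f_1(\cC)}{n}\pd^{k}\pg 1+o(1/\ln n)\pd
\end{equation*}
uniformly, for every fixed $\mu$; the needed precision follows since $f_1\ge\delta n$, so that $(f_1/n)^k$ is the dominant term and the corrections are $O(1/n)$. Raising this to the power $t_n$ gives $(f_1/n)^{kt_n}\to n^{-k}e^{-ak}$, since $t_n\ln(n/f_1)=\ln n + a+o(1)$. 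The obstacle is that $f_1/n$ may tend to $1$ or stay bounded away from it. Because $t_{\cC}$ is normalized precisely by $\ln(n/f_1)$, however, the limit $n^{-k}e^{-ak}$ is uniform; we only need relative error $o(1/t_n)$ in the character ratio, and $t_n=O(\ln n/\ln(1/(1-\cdots)))$ could be as large as $n\ln n/|\supp|$. Handling classes with small support but many fixed points is then the delicate case. There the relative error must be $o(|\supp \cC|/(n\ln n))$, and this is exactly the sort of refined estimate Theorem~\ref{thm: character estimate many fixed points with error term} is designed to provide. Signed versions ($\lambda'$) behave the same up to the factor $\sgn(\cC)^{t_n}$, which is absorbed into $\kE_n$.

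Thus, up to a vanishing $L^2$ error, the walk agrees on the finite-level isotypic components with the measure $\nu_n$ whose Fourier coefficients are $e^{-ak}n^{-k}$ at level $k$ (symmetrized by sign). This matches the random transposition case of \cite{Teyssier2020}, where it was shown that such a measure has total variation distance to uniform converging to $\dtv(\Poiss(1),\Poiss(1+e^{-a}))$. Concretely, the function $\sigma\mapsto\sum_{\lambda\text{ level}\le K} d_\lambda e^{-ak}n^{-k}\ch^\lambda(\sigma)$ depends asymptotically only on the small-cycle counts $(f_1,f_2,\dots)$ of $\sigma$, and equals the Radon--Nikodym density of $\Poiss(1+e^{-a})$ relative to $\Poiss(1)$ evaluated at $f_1(\sigma)$, truncated at $K$. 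Under uniform measure $f_1(\sigma)\to\Poiss(1)$ in law. One then concludes by letting $n\to\infty$ and then $K\to\infty$, restricting to $\kE_n$ (which only doubles densities on $\kA_n$ or its coset and does not change the fixed-point law). I expect the main obstacle to be the uniform relative-error estimate for the finite-level characters across all $\cC$ with $f_1\ge\delta n$, especially those with $f_1/n\to1$.
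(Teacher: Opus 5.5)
Your proposal is correct in outline, and it uses the same two inputs as the paper:
\begin{itemize}
\item the window-level bound \cite[Theorem~5.8]{Olesker-TaylorTeyssierThevenin2025sharpboundsandcutoff} plus \cite[Theorem~1.8]{TeyssierThevenin2025virtualdegreeswitten}, to discard all $\lambda$ with $\rsym(\lambda)>R_0$;
\item Theorem~\ref{thm: character estimate many fixed points with error term}, for the finitely many remaining shapes.
\end{itemize}

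Where you differ is the reference object. The paper never identifies the limit itself. It picks $s_n=\frac12 n(\ln n+a+o(1))$ with the appropriate parity, and applies Lemma~\ref{lem: comparaison familles u v} with $u_\lambda=\chi^\lambda(\cC^{(n)})^{t_n}$ and $v_\lambda=\chi^\lambda(\cT^{(n)})^{s_n}$. It then shows $\sum_\lambda d_\lambda|u_\lambda-v_\lambda|\to0$: at level $r$, both $d_\lambda u_\lambda$ and $d_\lambda v_\lambda$ tend to $\frac{e^{-ar}}{r!}d_{\lambda^*}$, and the parity of $s_n$ makes the signs on conjugates match. Finally it imports the pure-transposition profile, Theorem~\ref{thm: profile pure transpositions}, as a black box.

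You instead compare with the explicit limiting Fourier model and redo the last step of \cite{Teyssier2020}. That step is the identity $\sum_{\mu\vdash k}d_\mu\ch^{(n-k,\mu)}(\sigma)=\sum_{j=0}^{k}\binom{k}{j}(-1)^{k-j}f_1(\sigma)^{\downarrow j}$, exact for $n\geq 2k$ by decomposing $\mathrm{Ind}_{\kS_{n-k}}^{\kS_n}\mathbf 1$ and applying binomial inversion. Its generating function in $c=e^{-a}$ is $e^{-c}(1+c)^{f_1(\sigma)}$, which is the Poisson density ratio you describe.

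The two limits you take need slightly more than you wrote. Convergence in law of $f_1$ under $\Unif_{\kE_n}$ does not by itself pass $\bbE|P_K(f_1)|$ to the limit. You also need uniform integrability, which holds because the factorial moments of $f_1$ under $\Unif_{\kE_n}$ equal $1$ up to order $n-2$. The limit $K\to\infty$ then uses orthogonality of the Charlier polynomials in $L^2(\Poiss(1))$.

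What each route buys: yours shows directly why the density of $\Poiss(1+e^{-a})$ with respect to $\Poiss(1)$ appears. The paper's is shorter and modular, and the same comparison is reused for random involutions against lazy transpositions.

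Your Cauchy--Schwarz ($L^2$) truncation instead of the paper's $\ell^1$ bound is immaterial. However, it is Theorem~5.8's estimate inside the window, not the $L^2$ cutoff statement itself, that makes the deep tail small at $t_{\cC}(1+a/\ln n)$.

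Finally, the case $f_1/n\to1$ that you flag as the main obstacle is immediate.
\begin{itemize}
\item The requirement you eventually state, relative error $o(1/t_n)$, is the right one. The factor $1+o(1/\ln n)$ in your first display would not survive the power $t_n$ when $t_n\gg\ln n$.
\item Theorem~\ref{thm: character estimate many fixed points with error term} gives relative error $O((n-f_1)/f_1^2)$.
\item Since $\ln(n/f_1)\geq (n-f_1)/n$, we get $t_n(n-f_1)/f_1^2\leq(\ln n+O(1))\,n/f_1^2=O(\ln n/n)$ when $f_1\geq\delta n$.
\end{itemize}
This is exactly what the paper packages as $\chi^\lambda(\sigma_n)=(f_1(\sigma_n)/n)^{r+o(1/\ln n)}$.
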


Given an even integer $n\geq 2$, the random involution walk on $\kS_n$ with parameter $p\in (0,1)$ is the walk driven by the measure
\begin{equation}\label{eq: RI def}
    \RI_{n,p} = \sum_{s=0}^{n/2} \binom{n/2}{s}(1-p)^{s}p^{n/2-s} \Inv_{n,s},
\end{equation}
where $\Inv_{n,s}$ is the uniform measure on permutations of $\kS_n$ with cycle type $[2^{s}, 1^{n-2s}]$, i.e.\ which are a product of $s$ disjoint transpositions.
Bernstein \cite{Bernstein2018involutions} found the order of magnitude of the mixing time for these walks with a lower bound at $\log_{1/p}n$, and conjectured a cutoff at this time. We prove this conjecture and furthermore find the cutoff profile.

\begin{theorem}\label{thm: profil pour les involutions aléatoires}
    Let $p\in (0,1)$.
    Let $a\in \bbR$. Let $S\subset \bbN$ and $(t_n)_{n\in S}$ be a sequence of integers such that $t_n =\frac{\ln n +a+o(1)}{\ln(1/p)} $.
    As $S \ni n\to \infty$ we have 
\begin{equation}
    \dtv \pg \pg\RI_{n,p}\pd^{*t_n}, \Unif_{\kS_n}\pd \to \dtv\pg  \Poiss\pg 1+e^{-a}\pd, \Poiss(1)\pd.
\end{equation}
\end{theorem}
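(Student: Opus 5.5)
Since $\RI_{n,p}$ is a mixture of the class measures $\Inv_{n,s}$ with binomial weights, its Fourier transform at an irreducible $\lambda$ is
\[
\widehat{\RI}_{n,p}(\lambda)=\sum_{s}\binom{n/2}{s}(1-p)^s p^{n/2-s}\,\frac{\ch^\lambda([2^s,1^{n-2s}])}{d_\lambda},
\]
and $\RI_{n,p}^{*t}$ is the class function $\sum_\lambda d_\lambda \widehat{\RI}_{n,p}(\lambda)^t\ch^\lambda/n!$. The plan follows the structure of the proof of Theorem~\ref{thm: profil TV pour les CC}. We split the Fourier sum into finite-level diagrams $\lambda$, meaning those with $n-\lambda_1\le K$ for a large constant $K$, and the rest. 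The finite-level part determines the law of the fixed-point statistics. The high-level part is shown to be negligible in $L^2$ at time $t_n$. Note that $\RI_{n,p}$ charges both even and odd classes, since $s$ has both parities with positive weight. So there is no parity obstruction, and the limit is $\Unif_{\kS_n}$. For the sign representation, $\widehat{\RI}_{n,p}(\sgn)=\sum_s\binom{n/2}{s}(1-p)^sp^{n/2-s}(-1)^s=(2p-1)^{n/2}$, which is exponentially small. The same holds for other diagrams with $\lambda'_1$ close to $n$, via the same decomposition applied to $\lambda'$.

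\textbf{Step 1 (finite level).} For fixed $\mu\vdash k$ and $\lambda=(n-k,\mu)$, I would apply Theorem~\ref{thm: character estimate many fixed points with error term} to each class $[2^s,1^{n-2s}]$. The binomial weight concentrates on $s=(1-p)n/2+O(\sqrt n)$, so $f_1=n-2s\approx pn=\Theta(n)$, and the theorem applies. It should give $\ch^\lambda(\sigma)/d_\lambda=(f_1/n)^{k}(1+O(k^2/n))$ up to lower-order corrections that do not affect the leading term. Averaging over $s$ gives $\widehat{\RI}_{n,p}(\lambda)=p^k(1+O(1/n))$; the $O(\sqrt n)$ fluctuations of $f_1$ only produce a relative error of $O(1/n)$ after averaging. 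Hence $\widehat{\RI}_{n,p}(\lambda)^{t_n}=p^{kt_n}(1+o(1))=(e^{-a}/n)^k(1+o(1))$. Combined with $d_\lambda\sim n^k d_\mu/k!$, the finite-level contribution is $\sum_{\mu}\frac{d_\mu}{k!}e^{-ak}\ch^{(n-k,\mu)}(\sigma)$. This is exactly the expansion in \cite{Teyssier2020}: it shows that the law of the cycle counts $(f_1,f_2,\dots)$ under the walk is asymptotically that of independent Poissons with $f_1\sim\Poiss(1+e^{-a})$ and $f_i\sim\Poiss(1/i)$ for $i\ge2$. I would therefore reuse the argument of Theorem~\ref{thm: profil TV pour les CC} (or \cite{Teyssier2020}) that turns this into convergence of total variation to $\dtv(\Poiss(1+e^{-a}),\Poiss(1))$. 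The lower bound comes from projecting onto $f_1$. The upper bound comes from a sufficiency argument: conditionally on the cycle type the measure is uniform, so only the cycle-type law matters, and it is close to the mixture described above.

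\textbf{Step 2 (high level, the main obstacle).} We need $\sum_{\lambda:\,n-\lambda_1>K,\ n-\lambda'_1>K} d_\lambda^2\,|\widehat{\RI}_{n,p}(\lambda)|^{2t_n}\to0$ as $K\to\infty$, uniformly in $n$. The difficulty is that the Fourier coefficient is an average of character ratios over $s$. Small $s$ (many fixed points) gives ratios near $1$, but such $s$ carry exponentially small weight $\approx p^{n/2}$. I would bound $|\widehat{\RI}_{n,p}(\lambda)|\le \sum_s w_s|\ch^\lambda(\sigma_s)/d_\lambda|$. Then I would use the character bounds from \cite{Olesker-TaylorTeyssierThevenin2025sharpboundsandcutoff}, which give $L^2$ cutoff for classes with $\Theta(n)$ fixed points, in the form $|\ch^\lambda(\sigma)/d_\lambda|\le (f_1/n+\epsilon)^{\,c(n-\lambda_1)}$-type estimates (uniform for $f_1\ge\delta n$). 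On the typical window $f_1\in[(p-\eta)n,(p+\eta)n]$ this yields $|\widehat{\RI}_{n,p}(\lambda)|\lesssim (p+\eta')^{n-\lambda_1}$ up to the polynomial-in-level corrections handled there. The atypical $s$ contribute at most $e^{-cn}$, and this is negligible once we account for $d_\lambda\le \sqrt{n!}$ and $t_n\asymp\ln n$: we need $e^{-cnt_n}$ to beat $n!$, which holds since $nt_n\gg n\ln n/\ln(1/p)$ only with constants. I expect to split further here. Levels $n-\lambda_1\le \epsilon n$ are controlled by the typical-window estimate. For levels of order $n$, I would apply a uniform bound $|\widehat{\RI}_{n,p}(\lambda)|\le e^{-c n}$ directly to the whole mixture, since each character ratio is then exponentially small for all $s\ge\eta n$. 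The $s<\eta n$ terms have weight $\le e^{-c'n}$ with $c'$ growing as $\eta\to0$. Then $\sum d_\lambda^2 e^{-2cnt_n}\le n!\,e^{-2c n t_n}\to0$ because $t_n\to\infty$. Summing the level-$k$ bounds against $\sum_{n-\lambda_1=k}d_\lambda^2\le \binom nk^2k!\le n^{2k}/k!$ gives a tail $\sum_{k>K}e^{-2ak}/k!\to0$, which completes the proof.
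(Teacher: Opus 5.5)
\textbf{There is a genuine gap in your Step~2.}

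Your Step~1 is fine and matches the paper: you average Theorem~\ref{thm: character estimate many fixed points with error term} over the classes $[2^s,1^{n-2s}]$, use sign compensation near $[1^n]$, and compare with a transposition walk. The problem is that Step~2 tries to kill the high-level Fourier tail of $\RI_{n,p}$ itself, treating atypical $s$ as an additive error inside $\widehat{\RI}_{n,p}(\lambda)$. That tail does not go to $0$. Since $\RI_{n,p}$ is a class function with $\RI_{n,p}(\Id)=p^{n/2}$,
\[
\sum_{\lambda\neq[n]} d_\lambda^2\,\widehat{\RI}_{n,p}(\lambda)^{2t_n}= n!\,\RI_{n,p}^{*2t_n}(\Id)-1\;\geq\; n!\,p^{nt_n}-1 = e^{-(1+a)n+o(n)}-1 ,
\]
which tends to infinity when $a<-1$. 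The finite-level part stays bounded by your Step~1, so the tail you want to send to $0$ actually blows up. The $L^1$-type tail $\sum d_\lambda|\widehat{\RI}_{n,p}(\lambda)|^{t_n}$ blows up too, since it dominates the square root of the above.

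Your own numbers show the same thing. The weight of $\{s<\eta n\}$ is at least $p^{n/2}$, so $c'\leq\frac12\ln(1/p)$ whatever $\eta$ is. The step ``$n!\,e^{-2cnt_n}\to0$ because $t_n\to\infty$'' is not valid: $e^{-2cnt_n}=n^{-2cn/\ln(1/p)}e^{O(n)}$ against $n!=n^ne^{-n+O(\ln n)}$ is a contest between $c$ and $\frac12\ln(1/p)$, which you cannot win for $a<-1$. The crude bound $\sum_\lambda d_\lambda^2\leq n!$ also fails at levels $\epsilon n$ with $\epsilon<1/2$, where character ratios are only about $p^{\epsilon n}$.

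\textbf{A second, independent loss is exponent precision.} With a window $f_1\in[(p-\eta)n,(p+\eta)n]$ for fixed $\eta$, a bound $(p+\eta')^{n-\lambda_1}$ gives
\[
(p+\eta')^{kt_n}=p^{kt_n}\,n^{k\ln(1+\eta'/p)/\ln(1/p)}e^{O(k)},
\]
a polynomial loss at each level $k$. That ruins the summation against $\sum_{n-\lambda_1=k}d_\lambda^2\approx n^{2k}/k!$. You need $f_1/n=p+o(1/\ln n)$ on the window, and a character bound sharp to that precision.

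\textbf{The missing idea is to truncate at the level of measures, not inside the Fourier coefficients.} Couple $\RI_{n,p}$ with $\TRI_{n,p}$, where the binomial is restricted to $|s-(1-p)n/2|\leq n^{4/5}$. A union bound over the steps makes this cost $O(t_n n^{-3/5})=o(1)$ in total variation (Lemma~\ref{lem: coupling RI TRI}). The discarded mass is only polynomially small, so it could never be absorbed inside $\widehat{\RI}_{n,p}(\lambda)^{t_n}$, but it is harmless for a coupling.

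The eigenvalues of $\TRI_{n,p}$ are averages of $\chi^\lambda$ over permutations with $f_1=pn+O(n^{4/5})$ only. So the uniform bound $d_\lambda|\chi^\lambda(\sigma)|^{t_n}\leq d_\lambda^{(a-\zeta(R)+o(1))/\ln n}$ for $\rsym(\lambda)>R$ from \cite[Theorem~5.8]{Olesker-TaylorTeyssierThevenin2025sharpboundsandcutoff} applies directly. Summed using \cite[Theorem~1.8]{TeyssierThevenin2025virtualdegreeswitten}, it makes the high-level tail at most $\varepsilon$. Your Step~1 then gives $\alpha_\lambda=p^{r(\lambda)}(1+o(1/\ln n))$ at finite level. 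Comparing with the lazy transposition walk at time $\frac12 n(\ln n+a+o(1))$ finishes the proof.
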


Finally, we give estimates on the random transposition walk for $n=52$.
They rely on computers only for elementary computations. This illustrates that using representation theory can give very precise results not only asymptotically, but also for small values of $n$, and gives concrete estimates on the question at the origin of the study of the cutoff phenomenon. 

\begin{proposition}\label{prop: numerics for transpositions for n=52}
    Let $n=52$. Recall that $\cT$ denotes the conjugacy class of transpositions. Write $\tmix(\varepsilon) = \min \ag t\geq 0 \mid \dtv(\Unif_{\cT}^{*t}, \Unif_{\kE({\cT},t)}) \leq \varepsilon \ad$ for $\varepsilon\in (0,1)$. We have
\begin{enumerate}
    \item $187 \leq \tmix(10^{-2})\leq 191$;
    \item $246 \leq \tmix(10^{-3})\leq 247$;
    \item $\tmix(10^{-4}) = 304$.
\end{enumerate}
\end{proposition}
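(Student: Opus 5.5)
Since $\Unif_{\cT}^{*t}$ is a class function, everything reduces to computations over partitions of $52$. The total variation distance can be written as
\[
\dtv\pg \Unif_{\cT}^{*t}, \Unif_{\kE(\cT,t)}\pd = \frac12\sum_{\cC\in\Conj(\kS_{52})} \bg \Unif_{\cT}^{*t}(\cC) - \Unif_{\kE(\cT,t)}(\cC)\bd .
\]
By Fourier inversion on $\kS_n$, the mass of a class is
\[
\Unif_{\cT}^{*t}(\cC) = \frac{|\cC|}{n!}\sum_{\lambda\vdash n} d_\lambda \chi^\lambda(\cC)\, r_\lambda^t,
\]
where $r_\lambda = \chi^\lambda(\cT)/d_\lambda$. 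This ratio has the closed form $r_\lambda = \frac{2}{n(n-1)}\sum_i \big(\binom{\lambda_i}{2}-\binom{\lambda'_i}{2}\big)$. The trivial and sign representations give exactly $\Unif_{\kE(\cT,t)}$, since $r_{(1^n)}=-1$. So the deviation is the sum over the remaining $\lambda$. There are $p(52)\approx 2.8\cdot 10^5$ partitions, so a full character table is out of reach. I would therefore not compute the total variation exactly, and instead sandwich it.

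\textbf{Upper bound.} Pick a small finite level $k$, say $\lambda_1\ge n-k$ with $k$ around $4$ to $6$. I would split the deviation into two parts.
\begin{itemize}
\item The finite-level part. Here $d_\lambda$ and $\chi^\lambda(\sigma)$ are explicit polynomials in the cycle counts $f_1(\sigma),\dots,f_k(\sigma)$, via the Murnaghan--Nakayama rule and the ribbon-tableau counts of Section~\ref{s: Asymptotics of low-level characters}. So this part of the signed density is a function $g_t$ of $(f_1,\dots,f_k)$ only. The quantity $\sum_\sigma \frac{1}{n!}|g_t(\sigma)|$ is then an expectation over the joint law of the small cycle counts of a uniform permutation of $\kS_{52}$. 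That law is known exactly (by inclusion--exclusion and cycle-index formulas), and it can be computed with a modest enumeration over $f_1,\dots,f_k\le 52$.
\item The remainder. I would bound it in $L^2$ by Cauchy--Schwarz: its contribution is at most $\frac12\big(\sum_{\lambda\ \mathrm{level}>k,\ \lambda\neq(1^n)} d_\lambda^2 r_\lambda^{2t}\big)^{1/2}$. This uses only $d_\lambda$ (hook length formula) and $r_\lambda$, both cheap to compute for all partitions of $52$. Alternatively one can bound it analytically as in Diaconis--Shahshahani, grouping by level.
\end{itemize}

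\textbf{Lower bound.} I would use a test set depending on fixed points only, $A=\ag \sigma \du f_1(\sigma)\ge m\ad$, with $m$ optimized, and restricted to the right parity coset. The law of $f_1$ under the walk is exactly computable. Indeed, $\bbE[\binom{f_1}{j}]$ after $t$ steps is a linear combination of $r_\lambda^t$ over the finitely many $\lambda$ of level at most $j$, because $\binom{f_1}{j}$ is a combination of characters of level $\le j$. Inverting the factorial moments then gives $\bbP(f_1=m)$ exactly at $n=52$. Equivalently, one can track $f_1$ as a Markov chain on $\{0,\dots,52\}$ (restricted to transpositions, the fixed-point count evolves as a birth--death chain), and then compare $\bbP(A)$ with its value under the uniform law. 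One could refine with $f_2$ if needed. This gives rigorous lower bounds $\dtv\ge \bbP_t(A)-\Unif(A)$.

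\textbf{Main obstacle.} The hard part is making the upper bound tight enough to match the lower bound exactly at $\varepsilon=10^{-4}$, as item (c) claims, and within the stated gaps in (a) and (b). The plan is to increase $k$ until the $L^2$ tail is negligible compared with $\varepsilon$ at the relevant $t$. Near $t\approx 300$, levels beyond about $6$ should contribute below $10^{-6}$. Meanwhile the exact finite-level $L^1$ computation over the cycle counts should capture the true total variation up to that error. All floating-point work must be done with rational arithmetic or certified rounding for the bounds to be rigorous.
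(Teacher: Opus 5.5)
\textbf{The gap: the representations near the sign.} The pure transposition walk has no laziness, so $d_{\lambda'}=d_\lambda$ and $r_{\lambda'}=-r_\lambda$. Every $\lambda$ with $\lambda_1'\ge n-k$ therefore decays exactly as slowly as its conjugate. In your split, the finite block is $\lambda_1\ge n-k$, computed under the uniform law on all of $\kS_{52}$, and the remainder is everything else except $(n)$ and $(1^n)$. So these representations land in the remainder.

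Already $(2,1^{50})$, with $d=51$ and $|r|=49/51$, makes your $L^2$ tail at least $\frac{51}{2}(49/51)^t$. That is $e/2$ times the main term $\frac{51}{e}(49/51)^t$, and at $t=304$ it is about $1.3\cdot 10^{-4}$, not below $10^{-6}$. A sharper tail estimate cannot help, because the $L^1$ norm of the conjugate block genuinely equals that of your finite block. Separating them by the triangle inequality therefore more than doubles the upper bound, and none of the upper bounds in (a)--(c) follows.

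The missing step is to merge the two blocks. Since $\ch^{\lambda'}=\sgn\cdot\ch^\lambda$, and $\Unif_{\cT}^{*t}$ is supported on $\kE(\cT,t)$ where $\sgn=(-1)^t$, one has $d_{\lambda'}r_{\lambda'}^t\ch^{\lambda'}(\sigma)=d_\lambda r_\lambda^t\ch^\lambda(\sigma)$ there. Hence the signed density vanishes off the coset, and its finite-level part equals $2g_t$ on it. You must then compute $\sum_{\sigma\in\kE(\cT,t)}|g_t(\sigma)|$ using the joint law of $(f_1,\dots,f_k)$ for a uniform element of the coset, which means carrying the sign in the cycle-index computation. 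Only $\{\rsym(\lambda)>k\}$ goes into the tail. This is exactly the observation the paper uses to cancel the factor $\frac12$ in its level-one term.

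\textbf{A smaller error.} The fixed-point count is not a birth--death chain. Multiplying $\sigma$ by a uniform transposition raises $f_1$ by $2$ with probability $f_2(\sigma)/\binom{n}{2}$, and by $1$ with probability $(n-f_1(\sigma)-2f_2(\sigma))/\binom{n}{2}$. So the law of $f_1$ alone is not Markov. Your factorial-moment route is fine, and truncating $\bbP(f_1=0)=\sum_j(-1)^j\bbE\binom{f_1}{j}$ at even or odd $j$ gives one-sided bounds from low levels only. A test set is not needed anyway: after the repair, the same decomposition gives $\bigl|\dtv(\Unif_{\cT}^{*t},\Unif_{\kE(\cT,t)})-\sum_{\sigma\in\kE(\cT,t)}|g_t(\sigma)|\bigr|\le\frac12\sum_{\sigma\in\kE(\cT,t)}|h_t(\sigma)|$, which works in both directions.

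\textbf{Comparison with the paper.} Once repaired, your route is valid but much heavier than necessary, and your ``main obstacle'' is not one. The paper isolates only $\rsym(\lambda)=1$. There $\ch^{(n-1,1)}=f_1-1$ and $\bbE|f_1-1|=2\bbP(f_1=0)$, so the main term is $(n-1)(\frac{n-3}{n-1})^t\,\bbP(f_1=0)$ under the uniform law on $\kE(\cT,t)$. The even/odd derangement counts place this within $1/(n-2)!$ of $\frac{n-1}{e}(\frac{n-3}{n-1})^t$. Everything else is bounded in both directions by the crude quantity $\frac12\sum_{\rsym(\lambda)\ge 2}d_\lambda|r_\lambda|^t$, using the classical eigenvalue bound and $\sum_{r(\lambda)=r}d_\lambda\le n^r$.

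That error is at most $10^{-7}$ for $t\ge 303$, while $m_{52}(303)\approx 1.021\cdot 10^{-4}$ and $m_{52}(304)\approx 0.981\cdot 10^{-4}$. So (c) needs only level one, and the windows left open in (a) and (b) are exactly where the crude error is too large. Your exact $L^1$ computation up to level $k$ with an $L^2$ tail would give a smaller error and could narrow those windows. The cost is character polynomials, a parity-refined joint cycle-count law on $\kS_{52}$, and a sweep over all $p(52)$ partitions.
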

\begin{remark}
    Laziness has a significant impact for small values of $n$ for the original lazy walk (the $\pg \frac{1}{n} + \frac{n-1}{n}\Unif_{\cT}\pd$-walk (started at the identity permutation $\Id$), which we denote by $(X_t)_{t\geq 0}$). 
    
For $n=52$, at time 100, due to the small laziness $1/52$, the sign of permutations is not fully mixed yet, and one can even bet on $X_{100}$ being even.
More precisely, the number of times $\Id$ has been picked before time 100 follows a binomial law $\Bin(100, 1/52)$, which is approximately $\Poiss(\alpha)$, where $\alpha = 100/52$ (this approximation is precise). Therefore, the probability that $X_{100}$ is an even permutation is about the probability that a $\Poiss(\alpha)$ random variable is even, that is $\frac{\cosh(\alpha)}{e^\alpha} = \frac{1+e^{-2\alpha}}{2} \approx 51\%$, which is significant. Similarly, the probability that $X_{160}$ is even is about $50.1\%$.
\end{remark}

\subsection{Obtaining cutoff profiles with eigen-analysis}

There are different techniques to obtain cutoff profiles, using spectral analysis, probabilistic arguments, or a bit of both. For walks that have lots of symmetries, it is often possible to obtain detailed spectral information. Then to obtain the cutoff profile it is sufficient to have the following three ingredients:
\begin{enumerate}
    \item[(1)] precise-enough bounds on eigenvalues and their multiplicities;
    \item[(2)] precise-enough Taylor expansions of the eigenvalues that \textit{asymptotically contribute};
    \item[(3)] identities that relate eigenvectors corresponding to eigenvalues that \textit{asymptotically contribute}.
\end{enumerate}

In this paper, we focus on the $\Unif_{\cC}$-walks, where $\cC$ is a conjugacy class of $\kS_n$ with $\Theta(n)$ fixed points (or a mixture of such classes for random involutions). For these walks, (1) was obtained in \cite{Olesker-TaylorTeyssierThevenin2025sharpboundsandcutoff} and (3) was obtained in \cite{Teyssier2020}. The last piece needed to prove Theorem~\ref{thm: profil TV pour les CC} is therefore to prove asymptotic equivalents for some of the eigenvalues, i.e.\ to prove (2). 

\medskip

Let us give more context on methods leading to (1), (2), and (3), and how (2) differs from (1) and (3). This paragraph assumes some knowledge of representation theory.

For (1), one needs to obtain precise enough uniform bounds on characters (i.e.\ on eigenvalues). There is no known way to do this using only the Murnaghan--Nakayama rule. In \cite{Olesker-TaylorTeyssierThevenin2025sharpboundsandcutoff} we relied on the Naruse hook length formula (see \cite{MoralesPakPanova2018I}), and had to introduce deep triple decompositions of Young diagrams to quantify which parts of the diagrams contribute to characters and which parts contribute to dimensions (i.e.\ to multiplicities of eigenvalues).
This combinatorial technique was designed for \textit{high frequencies}, which are the most delicate to control despite not contributing asymptotically. The counting arguments worked well for conjugacy classes with, say, $\varepsilon n$ to $(1-\varepsilon)n$ fixed points, and some probabilistic bootstrapping was needed to transfer the bounds to conjugacy classes with support $o(n)$.

For (3), we explained in \cite[Section~2]{Teyssier2020} how to filter out \textit{low frequencies} (the ones which asymptotically contribute) and obtained, mostly via the Murnaghan--Nakayama rule, a character identity \cite[Lemma~4.3]{Teyssier2020} which tells how some weighted sums of characters (i.e.\ eigenvectors) at the same \textit{level} asymptotically compensate (for most permutations). We thank Christian Krattenthaler for identifying the polynomials from this identity to be Poisson--Charlier polynomials.

For (2), which is the goal of the present paper, we will prove asymptotic equivalents of characters for low frequencies in Theorem~\ref{thm: equivalent asymptotique r fini pour les caractères suffisamment de points fixes}. This also relies on the Murnaghan--Nakayama rule but in a different way: we study characters individually and we count how many ribbon tableaux of a Young diagram there are (the ribbons are also peeled in a different order compared to what was done for (3)), depending on the number of boxes over the first row which are covered by ribbons of length at least 2. We also complement these equivalents with bounds, such as Theorem~\ref{thm: borne asymptotique r fini pour les caractères peu de points fixes}, which can be proved by the same technique but are useful for other problems. We discuss some of these other potential applications at the end of Section~\ref{s: statements for applications}.

\subsection{Notation}

We denote the symmetric group of degree $n$ by $\kS_n$ and its set of irreducible representations by $\widehat{\mathfrak{S}_n}$. Since $\widehat{\mathfrak{S}_n}$ is in bijection with the set $\kY_n$ of integer partitions of size $n$, which can be represented by Young diagrams, for convenience we use $\lambda$ to denote at the same time a representation, the associated integer partition, and the associated Young diagram, and we may write $\lambda \vdash n$ for $\lambda \in \widehat{\kS_n}$.
If $\lambda\in \widehat{\mathfrak{S}_n}$ and $\sigma \in \mathfrak{S}_n$ is a permutation, we denote the dimension of $\lambda$ by $d_\lambda$, the associated character by $\ch^\lambda(\sigma) =\Tr \rho^\lambda(\sigma)$, and the associated renormalized character by $\chi^\lambda(\sigma) = \frac{\ch^\lambda(\sigma)}{d_\lambda}$.
We assume familiarity with the representation theory of symmetric groups and refer to \cite{LivreMeliot2017RepresentationTheoryofSymmetricGroups} for a detailed exposition.

\medskip

Given a Young diagram $\lambda \vdash n$, we denote the length of its $i$-th row (for $i\geq 1$) by $\lambda_i$.

\medskip

We write 
\begin{itemize}
    \item $f(n) = O(g(n))$ or $f(n) \lesssim g(n)$ or $g(n) = \Omega(f(n))$ if there exists a constant $C>0$ such that $|f(n)| \leq C |g(n)|$ for all $n$ large enough;
    \item $f(n) = o(g(n))$ or $g(n) = \omega(f(n))$ if $f(n)/g(n) \to 0$ as $n\to \infty$;
    \item $f(n) = \Theta(g(n))$ or $f(n) \asymp g(n)$ if $f(n) = O(g(n))$ and $g(n) = O(f(n))$;
    \item $f(n) \sim g(n)$ if $g(n) = (1+o(1)) f(n)$.
\end{itemize}
We often emphasize the dependence on a parameter by writing it as an index. For example, for $r>0$, as $n\to \infty$, we may write $5n + r^r\sqrt{n} \lesssim_r n$ or $(r^2+ r + 1) n! = O_r(n!)$.

\section{Asymptotics of finite-level characters}\label{s: Asymptotics of low-level characters}

\subsection{General statements}
We introduce a new quantity to better capture the contribution of the cycle structure of permutations to characters: for $\sigma \in \kS_n$ we set
\begin{equation}
    M(\sigma)   = \max_{i\geq 1} f_i(\sigma)^{1/i};
\end{equation}
and the rescaled version
\begin{equation}
    J(\sigma) = \frac{\ln M(\sigma)}{\ln n} = \max_{i\geq 1} \frac{1}{i}\frac{\ln f_i(\sigma)}{\ln n}.
\end{equation}
Recall that $\ch^\lambda$ denotes the character and $\chi^\lambda = \ch^\lambda/d_\lambda$ is the renormalized character. The \textit{level}, which we denote by $r(\lambda)$, of a representation (or equivalently of a Young diagram) $\lambda\vdash n$ is the number of boxes outside of its first row, that is, $r(\lambda) = n-\lambda_1$. Intuitively, representations with a low level correspond to low frequencies in terms of Fourier analysis.

We prove the following general bound on finite-level characters of symmetric groups.
\begin{theorem}\label{thm: character bound finite level general with M sigma}
    Let $r\geq 0$. As $n\to \infty$, uniformly over all $\lambda \vdash n$ such that $\lambda_1 = n-r$ and every $\sigma \in \kS_n$, we have
    \begin{equation}
        \bg \ch^\lambda(\sigma) \bd \lesssim_r M(\sigma)^r;
    \end{equation}
and equivalently,
\begin{equation}\label{eq: form with J sigma}
     \bg \ch^\lambda(\sigma) \bd \lesssim_r d_\lambda^{J(\sigma)}.
\end{equation}
\end{theorem}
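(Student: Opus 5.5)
We will prove the bound $|\ch^\lambda(\sigma)|\lesssim_r M(\sigma)^r$ with the Murnaghan--Nakayama rule, removing ribbons in an order adapted to finite-level diagrams. Write $\lambda=(n-r,\mu)$ with $\mu\vdash r$. Since $r$ is fixed, there are only $O_r(1)$ choices of $\mu$. By conjugacy invariance we may order the cycles of $\sigma$ as we like. We first peel off all cycles of length $\ge r+2$, then the short cycles (length $\le r+1$).

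The first step handles long cycles. A ribbon of length $\ell\geq r+2$ cannot fit inside the rows below the first, since those contain at most $r$ boxes plus the hook through the corner. So every long ribbon must contain boxes of the first row, and at each step there are at most $r+1$ valid positions for it, depending on how far it climbs into the lower rows. After removing all long cycles we are left with a diagram of level $r'\le r$, and the number of ways to reach each intermediate diagram is at most $(r+1)^{(\#\text{long cycles})}$. This is too crude as it stands. I would refine it by noting that a ribbon which does not climb into the lower part has a unique position and leaves the lower shape $\mu$ unchanged. A ribbon that does climb reduces the level, and this can happen at most $r$ times. Hence the number of ribbon paths through the long cycles is $\lesssim_r \binom{f_{\ge r+2}}{\le r}$-type counts. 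Those are polynomial in the numbers of long cycles, which is too big.

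The fix is to process long cycles before short ones but let the \emph{level budget} pay for climbs. Climbs into the lower part are absorbed by the short-cycle count, and the subtle point is signs. The remedy is to instead peel the \emph{short} cycles first from the bottom part.

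Cleaner plan: expand $\ch^\lambda$ via the determinantal/Jacobi--Trudi formula as a finite signed combination ($O_r(1)$ terms) of characters of the induced representations $\mathrm{Ind}(\mathrm{triv}_{n-k}\otimes\lambda')$ with $k\le r$. Equivalently, $\ch^\lambda$ is an $O_r(1)$ combination of functions $\sigma\mapsto \prod_i \binom{f_i(\sigma)}{m_i}$ for a partition of $k\le r$ with $m_i$ cycles of length $i$, up to bounded coefficients. This is the classical fact that $\ch^{(n-|\mu|,\mu)}$ is a polynomial in $f_1,\dots,f_r$ with coefficients bounded in terms of $r$, independent of $n$: it is a character polynomial. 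Each monomial $\prod_i f_i^{m_i}$ with $\sum i m_i\le r$ is bounded by $\prod_i M^{i m_i}\le M^{r}$, since $f_i\le M^i$ and $M\geq 1$. The main obstacle is establishing the character-polynomial structure with $n$-uniform coefficients; I would derive it via Murnaghan--Nakayama by removing, for $\sigma$, all non-fixed short cycles from the lower part and using the known formula for $\ch^{(n-k,\nu)}$ on permutations supported on fixed points.

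For the equivalent form, it remains to compare with the dimension. We have $d_\lambda\asymp_r n^r$, so $d_\lambda^{J(\sigma)}\asymp_r M(\sigma)^r$. Here $M\le n$ keeps the implicit constants controlled, since $d_\lambda^{J}=(n^r c_\lambda)^{J}$ with $c_\lambda\in[c,1]$ and $J\in[0,1]$.
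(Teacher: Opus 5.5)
Your final argument (the ``cleaner plan'') is correct and follows a genuinely different route from the paper's. The first half of your proposal (peeling long cycles first, level budgets, signs) is abandoned by your own account and should simply be deleted.

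\textbf{The paper's route.} The paper works entirely inside the Murnaghan--Nakayama rule and bounds the unsigned number $|\RT(\lambda,\alpha)|$ of ribbon tableaux. For $f_1(\sigma)\geq 2r$ it stratifies tableaux by the number $K$ of boxes of $\lambda^*$ covered by ribbons of length at least $2$, and gets at most $d_\mu\prod_i\binom{f_i}{s_i}\lesssim_r f^{r-K}G(\sigma)^K$ per stratum. For $I(\alpha)>r$ the long ribbons are forced once the $1$-ribbons are placed. The general case is reduced to these two by an injective full-fragmentation map, applied at most $r$ times, which can only increase $|\RT(\lambda,\cdot)|$ and changes $M$ by $O_r(1)$.

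\textbf{Your route.} You expand along the first row of the Jacobi--Trudi determinant, $s_{(n-r,\lambda^*)}=\sum_{j\geq 0}(-1)^j h_{n-r+j}\,s_{\lambda^*/(1^j)}$ (your $\mu$ is $\lambda^*$). Then $\ch^\lambda$ is an alternating sum of at most $r+1$ induced characters $\mathrm{Ind}_{\kS_{n-k}\times\kS_k}^{\kS_n}\bigl(1\otimes\ch^{\lambda^*/(1^j)}\bigr)$ with $k=r-j$. The value of such a character at $\sigma$ is $\sum_A \ch^{\lambda^*/(1^j)}(\sigma|_A)$ over $\sigma$-stable $k$-subsets $A$. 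Two facts finish the argument:
\begin{itemize}
\item $|\ch^{\lambda^*/(1^j)}|\leq r!$;
\item the number of such $A$ on which $\sigma$ has cycle type $\rho\vdash k$ is $\prod_i\binom{f_i(\sigma)}{m_i(\rho)}\leq M(\sigma)^{k}\leq M(\sigma)^r$, using $M(\sigma)\geq 1$.
\end{itemize}
So the bound holds with constant $(r+1)p(r)\,r!$ and no case distinction on $f_1$.

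This computation already establishes the character-polynomial structure with $n$-independent coefficients. So the ``main obstacle'' you flag is not one, and the vague Murnaghan--Nakayama derivation you sketch for it is unnecessary; alternatively, you can cite Macdonald's character polynomials.

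\textbf{Comparison.} Your route is shorter. The paper's route gives the stronger bound on $|\RT(\lambda,\alpha)|$ itself rather than only on the signed sum. Its ribbon stratification is also reused for Theorem~\ref{thm: character estimate many fixed points with error term}. Your expansion would give that too: the terms with $\rho=1^k$ sum to $d_{(f-r,\lambda^*)}$ by the same identity in $\kS_f$ when $f\geq 2r$.

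Your treatment of the equivalent form is the same as the paper's: $d_\lambda^{J(\sigma)}=M(\sigma)^r c_\lambda^{J(\sigma)}$, with $c_\lambda=d_\lambda/n^r\in[c_r,1]$ and $J(\sigma)\in[0,1]$.
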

\begin{remark}
    Larsen and Shalev \cite{LarsenShalev2008} introduced an orbit growth exponent $E(\sigma)$ and proved a uniform bound $\bg \ch^\lambda(\sigma) \bd \leq d_\lambda^{E(\sigma) + o(1)}$, which we improved to $\bg \ch^\lambda(\sigma) \bd \leq d_\lambda^{E(\sigma) + O(1/\ln n)}$ in \cite{TeyssierThevenin2025virtualdegreeswitten}. For representations of level $r$, this bound can be rewritten as 
\begin{equation}
    \bg \ch^\lambda(\sigma) \bd \lesssim_r d_\lambda^{E(\sigma)}.
\end{equation}
The quantity $E(\sigma)$ is optimal for permutations $\sigma$ with a \textit{pure} cycle structure, that is if $\sigma \sim m^{n/m}$ for some $m$. In this case we have $J(\sigma)= E(\sigma)$. However, $E(\sigma)$ fails to give the correct exponent for hybrid cycle structures.
For example, omitting integer parts, if $\sigma$ is an involution with $\sqrt{n}$ fixed points, i.e.\ if $\sigma \sim (1^{\sqrt{n}}$, we have $2^{(n-\sqrt{n})/2})$, $E(\sigma) = 3/4 + o(1)$ while $J(\sigma) = 1/2$.
\end{remark}

We conjecture that the sharpest exponential character bound, uniform over all $\lambda \in \widehat{\kS_n}$ and all $\sigma\in \kS_n$, should be written in terms of $J(\sigma)$, with an error term that is so small that it does not affect the applications to mixing times. 

\begin{remark}
    The main result of \cite{Olesker-TaylorTeyssierThevenin2025sharpboundsandcutoff} is a character bound that is uniform over all $\lambda$ and all $\sigma$ with $\Theta(n)$ fixed points, and that can be rewritten as $\bg \ch^\lambda(\sigma) \bd \leq d_\lambda^{J(\sigma)}$. However the bound for permutations with $o(n)$ fixed points are currently weaker than this.
\end{remark}

\begin{theorem}\label{thm: character estimate many fixed points with error term}
    Let $r\geq 1$. As $n\to \infty$, uniformly over all diagrams $\lambda \vdash n$ with $n-\lambda_1 = r$, and all permutations $\sigma \in \kS_n$ with $f_1(\sigma) \geq \sqrt{n}$, we have, denoting $f = f_1(\sigma)$ and $k=n-f$,
\begin{equation}
        \chi^\lambda(\sigma) = \pg \frac{f}{n}\pd^r \pg 1 +O_r\pg \frac{k}{f^2} \pd \pd. 
    \end{equation}
\end{theorem}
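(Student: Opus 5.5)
We plan to prove the estimate with the Murnaghan--Nakayama rule, removing the non-trivial cycles of $\sigma$ first and the fixed points last. Write $\lambda = (n-r,\mu)$ with $\mu \vdash r$, and let $\sigma$ have cycle type $(\nu, 1^{f})$, where $\nu$ is a partition of $k$ with no parts equal to $1$. The rule expresses $\ch^\lambda(\sigma)$ as a signed sum over sequences of ribbons. We peel the ribbons for the parts of $\nu$ first, leaving a diagram $\lambda' \vdash f$. The remaining boxes are all removed as ribbons of length $1$, so $\ch^\lambda(\sigma) = \sum_{\lambda'} \pm\, N(\lambda/\lambda')\, d_{\lambda'}$. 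Here $N(\lambda/\lambda')$ is the signed number of ribbon tableaux of the skew shape $\lambda/\lambda'$ of type $\nu$.

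We classify the terms by $j$, the number of boxes of $\lambda'$ outside its first row, so $0\le j\le r$.

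\textbf{The term $j=r$.} All long ribbons then lie inside the first row. Each part is removed as a horizontal ribbon with sign $+1$, and there is exactly one way to do it. This term equals $d_{(f-r,\mu)}$.

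\textbf{The terms $j<r$.} At least one ribbon must cover boxes outside the first row. Such ribbons are constrained: every ribbon lying on the first row adds no choice, and a ribbon that touches lower rows must start within $O_r(1)$ columns of them. Both $\lambda$ and $\lambda'$ are finite-level diagrams, so the number of possible skew shapes is $O_r(1)$. We need to bound $|N(\lambda/\lambda')|$ uniformly in $\nu$. The parts of $\nu$ that are placed on the bottom rows form a subset of total size at most $r$. For each ordering, we choose which cycles go there, giving at most $O_r(1)\cdot\prod$ (number of cycles of each length used). A part of length $i\ge2$ covering $r-j$ lower boxes contributes at most $f_i\le k/i$ choices. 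Overall, the terms with level $j$ give $O_r(k^{?})\, d_{(f-j,\cdot)}$.

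Carefully: since each lower box uses a ribbon of length at least $2$ and $r-j\ge 1$ boxes are removed, the number of cycles involved is at most $r-j$. This gives $|N|\lesssim_r k^{r-j}$. Then, using $d_{(f-j,\mu')}\asymp_r f^j$ and $d_\lambda \asymp_r n^r/\prod$, the relative error compared with the main term $d_{(f-r,\mu)}\asymp f^r$ is
\[
\sum_{j<r} k^{r-j} f^{j}/f^{r} = \sum_{m\ge1}(k/f)^m .
\]
That is too crude when $k\asymp f$, so a finer count is needed. A ribbon of length $i$ covering boxes below the first row covers at most $r$ such boxes. Hence a ribbon covering $b$ lower boxes has length $i\ge b+1$, which is needed to wrap around the first row's end. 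Its number of choices is then $f_i$ with $i\ge b+1$, and $f_i\le k/i$. The key gain is to pair two lower boxes per cycle of length $\ge 2$ while charging $f^{-1}$ per lower box. We aim for the bound
\[
\frac{|N(\lambda/\lambda')|\, d_{\lambda'}}{d_{(f-r,\mu)}} \lesssim_r \frac{k}{f^2}
\]
for every $j<r$. We would like to prove it by showing that removing $b$ lower boxes costs a factor $f^{-b}$ in dimension but gains at most $f_i\le k$ with $b\ge 2$ whenever a single cycle is used. When several cycles are used, each covers at least one lower box and comes with $i\ge2$. The condition $f\ge\sqrt n$ ensures $k/f^2\lesssim 1$, so these terms are dominated. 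This is the main obstacle. A ribbon of length $2$ can cover just one lower box, a vertical domino between rows $1$ and $2$. The estimate then gives $k/f$, not $k/f^2$. We must therefore exploit cancellation: for each such configuration the ribbon can alternatively be laid horizontally, with opposite sign or the same shape weight. We would first try to pair tableaux via the first-order identity $\sum$ over placements of $\pm d$, mimicking the Jucys--Murphy computation. The alternative is to compare with the exact $r=1$ formula $\chi=(f-1)/(n-1)$ and induct on $r$ through the branching rule.

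**Final assembly.** Finally, $d_{(f-r,\mu)}/d_{(n-r,\mu)} = \frac{(f)_r}{(n)_r}\cdot\frac{\prod \text{hooks}}{\prod\text{hooks}}$. The first-row hooks give $(f/n)^r(1+O_r(1/f))$ multiplied by factors $(1+O(r/f))$. The error $1/f$ is $\le k/f^2$ only when $k\ge f$; otherwise the exact expression $\frac{(f-r)\cdots}{(n-r)\cdots}$ must be expanded using $n=f+k$, which yields $(f/n)^r(1+O(k/f^2))$ after cancellation. Combining the two parts gives the theorem.
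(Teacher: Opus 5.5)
There is a genuine gap: you never establish the bound for the terms with $j<r$. The configuration you single out as the ``main obstacle'' (a long ribbon wrapping around the end of the first row, e.g.\ a vertical domino in rows $1$ and $2$ covering a single lower box) in fact never occurs under the hypothesis.

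\textbf{The missing observation.} For $n$ large, $f\geq\sqrt n\geq 2r$. Any $\lambda'\vdash f$ with $\lambda'\subset\lambda$ has at most $r$ boxes outside its first row, so $\lambda'_1\geq f-r\geq r\geq\lambda_2$. The cells of $\lambda/\lambda'$ in row $1$ lie in columns $>\lambda'_1\geq\lambda_2$, while those in row $2$ lie in columns $\leq\lambda_2$. So the first-row part of $\lambda/\lambda'$ is disconnected from the rest. Every ribbon is therefore either horizontal in row $1$ or entirely contained in rows $\geq 2$. Consequently the $r-j$ lower boxes are tiled by whole ribbons of lengths $i\geq 2$. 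If $s_i$ of them have length $i$, then $r-j=\sum_i i s_i$. In particular $j=r-1$ is impossible, and at most $(r-j)/2$ cycles are used. Your claim that a ribbon covering $b$ lower boxes needs length $i\geq b+1$ is wrong in this regime: it has length exactly $b$.

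\textbf{The bound this gives.} With this observation no cancellation, pairing of tableaux, or induction via the branching rule is needed; the triangle inequality suffices. For fixed $\lambda'$ and $(s_i)$, the tableaux are counted as follows:
\begin{itemize}
\item there are $O_r(1)$ tilings of $\lambda^*/(\lambda')^*$;
\item there are $d_{\lambda'}\lesssim_r f^{j}$ standard fillings of $\lambda'$;
\item there are at most $\prod_i f_i^{s_i}\leq k^{\sum_i s_i}\leq k^{(r-j)/2}$ choices of which cycles go to the lower rows;
\item the first-row ribbons are then placed uniquely.
\end{itemize}
Relative to the main term $d_{(f-r,\mu)}\asymp_r f^r$, these terms contribute $\lesssim_r (\sqrt k/f)^{r-j}\leq k/f^2$, since $r-j\geq 2$ and $\sqrt k\leq\sqrt n\leq f$.

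This is precisely the paper's argument: it peels in the same order you do, its Lemmas on $S(\lambda,\alpha,\mu,s)$ and $S'(\lambda,\alpha,K)$ exclude $K=1$, and it uses $G(\sigma)\leq k^{1/2}$. Your main term $d_{(f-r,\mu)}$ and your final expansion of $d_{(f-r,\mu)}/d_\lambda$ agree with the paper's. Without the disconnection step, however, your own estimates only give the insufficient $\sum_{m\geq 1}(k/f)^m$. The proposed cancellation (``laid horizontally, with opposite sign'') is not a concrete argument and would not be needed anyway.
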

\begin{remark}
The case of [$k$-cycles with $k\leq n/3$] of Theorem~\ref{thm: character estimate many fixed points with error term} is known as \cite[Corollary~3.4]{NestoridiOlesker-Taylor2022limitprofiles}, and the case of finite $k$-cycles was reproved independently in \cite[Lemma~4.3]{Fulman2024commutators}, both with the same the error term. Both results above rely on the character estimates of \cite{Hough2016}, which rely on complex analysis.
\end{remark}

\begin{remark}\label{rk: sign change if transpose and odd}
   Let $n\geq 2$, $\sigma \in \kS_n$ and $\lambda \vdash n$. Then $\chi^{\lambda'}(\sigma) = \sgn(\sigma) \chi^\lambda(\sigma)$, where $\lambda'$ is the conjugate of $\lambda$. Therefore Theorem~\ref{thm: character estimate many fixed points with error term} also applies up to a factor $\sgn(\sigma)$ to diagrams with $\lambda_1' = n-O(1)$.
\end{remark}

\subsection{Statements for applications}\label{s: statements for applications}

The following consequences of Theorems~\ref{thm: character bound finite level general with M sigma} and~\ref{thm: character estimate many fixed points with error term} will be useful in practice. Corollary~\ref{cor: equivalent asymptotique d lambda fois caractère à une certaine puissance r fini pour les caractères suffisamment de points fixes} is a generalization of \cite[Lemma~4.4]{Fulman2024commutators} (which is implicit in \cite{NestoridiOlesker-Taylor2022limitprofiles}). It follows from Theorem~\ref{thm: equivalent asymptotique r fini pour les caractères suffisamment de points fixes}, which itself follows from Theorem~\ref{thm: character estimate many fixed points with error term}. Theorem~\ref{thm: borne asymptotique r fini pour les caractères peu de points fixes} and Corollary~\ref{cor: equivalent asymptotique d lambda fois caractère à une certaine puissance r fini pour les caractères peu de points fixes} are sharp versions of \cite[Corollary~7.5~(a)]{TeyssierThevenin2025virtualdegreeswitten}, restricted to finite-level representations.

\begin{theorem}\label{thm: equivalent asymptotique r fini pour les caractères suffisamment de points fixes} 
Let $r\geq 0$. For each $n\geq 1$, let $\sigma_n\in \kS_n$.
Assume that $f_1(\sigma_n)/\sqrt{n}\to \infty$. Then as $n\to \infty$, uniformly over all $\lambda \vdash n$ such that $\lambda_1 = n-r$, we have
\begin{equation}
    \chi^{\lambda}(\sigma_n) = \pg f_1(\sigma_n)/n\pd^{r + o_r(1/\ln n)}.
\end{equation}
\end{theorem}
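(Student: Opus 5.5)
The approach is to deduce the statement directly from Theorem~\ref{thm: character estimate many fixed points with error term}, converting its multiplicative error into an additive error in the exponent. For $r=0$ we have $\lambda=(n)$ and $\chi^\lambda\equiv 1$, so there is nothing to prove; assume $r\geq 1$. Write $f=f_1(\sigma_n)$ and $k=n-f$. We may assume $f\leq n-1$, since otherwise $\chi^\lambda(\sigma_n)=1=(f/n)^r$ and any exponent works. The hypothesis $f/\sqrt n\to\infty$ gives $f\geq\sqrt n$ for $n$ large. Theorem~\ref{thm: character estimate many fixed points with error term} then yields, uniformly over $\lambda$ with $\lambda_1=n-r$,
\[
\chi^\lambda(\sigma_n)=(f/n)^r(1+\eta_n),\qquad |\eta_n|\leq C_r\frac{k}{f^2}\leq C_r\frac{n}{f^2}\to 0 .
\]
In particular $\chi^\lambda(\sigma_n)>0$ for $n$ large. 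We can therefore define the exponent error $e_n$ by $\chi^\lambda(\sigma_n)=(f/n)^{r+e_n}$, which gives
\[
e_n=-\frac{\ln(1+\eta_n)}{\ln(n/f)},\qquad |e_n|\lesssim_r \frac{k}{f^2\ln(n/f)} .
\]
It remains to show that $\frac{k\ln n}{f^2\ln(n/f)}\to 0$ uniformly. I split into regimes according to the size of $f$.

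\emph{Case $f\geq n/2$.} Here $\ln(n/f)=-\ln(1-k/n)\geq k/n$ and $f^2\geq n^2/4$. Hence
\[
\frac{k\ln n}{f^2\ln(n/f)}\leq \frac{4k\ln n}{n^2}\cdot\frac{n}{k}=\frac{4\ln n}{n}\to0 .
\]
This is the delicate regime, where $\ln(n/f)$ can be tiny. The bound works precisely because the error in Theorem~\ref{thm: character estimate many fixed points with error term} carries the factor $k$, which cancels the smallness of $\ln(n/f)\approx k/n$.

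\emph{Case $f<n/2$.} Use $k\leq n$ and write $f=n^{1-\alpha}$ with $\alpha\in[\ln 2/\ln n,\,1/2]$. The ratio is then at most $\frac{n}{f^2}\cdot\frac{\ln n}{\ln(n/f)}=\frac{n}{f^2}\cdot\frac1\alpha$.
\begin{itemize}
\item If $\alpha\geq 1/4$, then $1/\alpha\leq 4$, and $n/f^2\to0$ by the hypothesis $f/\sqrt n\to\infty$.
\item If $\alpha<1/4$, then $n/f^2=n^{2\alpha-1}\leq n^{-1/2}$, while $1/\alpha\leq \ln n/\ln 2$. The product tends to $0$.
\end{itemize}
The key point in the sub-regime $\alpha\geq 1/4$, where $f$ is close to $\sqrt n$, is that $\ln(n/f)\asymp\ln n$ exactly compensates the $\ln n$ in the target. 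So only $n/f^2\to0$ is needed, which is what the hypothesis $f/\sqrt n\to\infty$ provides.

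Combining the cases gives $e_n\ln n\to0$ uniformly in $\lambda$ (the constants depend only on $r$), that is, $e_n=o_r(1/\ln n)$. The main obstacle is purely bookkeeping: one must not bound $\ln(n/f)$ below by a constant near $f\approx n$. Instead, the factor $k$ in the error term $O_r(k/f^2)$ must be used against $\ln(n/f)\geq k/n$.
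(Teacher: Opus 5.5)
Your proposal is correct and follows the paper's proof: apply Theorem~\ref{thm: character estimate many fixed points with error term}, turn the multiplicative error $O_r(k/f^2)$ into an exponent error $O_r\big(k/(f^2\ln(n/f))\big)$, and use $\ln(n/f)\geq k/n$ in the regime where $f$ is close to $n$. The differences are cosmetic: the paper uses a single split at $f=n^{2/3}$ instead of your three regimes, and you make explicit the trivial edge cases $r=0$ and $f=n$, which the paper leaves implicit.
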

\begin{corollary}\label{cor: equivalent asymptotique d lambda fois caractère à une certaine puissance r fini pour les caractères suffisamment de points fixes} 
    Let $r\geq 0$. Let $c\in \bbR$. For each $n\geq 1$, let $\sigma_n\in \kS_n$. Let $(t_n)$ be a sequence of positive real numbers such that $t_n = \frac{(\ln n)+c +o(1)}{\ln(n/f_1(\sigma_n))}$.
Assume that $f_1(\sigma_n)/\sqrt{n}\to \infty$. Then as $n\to \infty$, uniformly over all $\lambda \vdash n$ such that $\lambda_1 = n-r$, we have
\begin{equation}
    d_\lambda \chi^{\lambda}(\sigma_n)^{t_n}\to \frac{e^{-rc}}{r!}d_{\lambda^*},
\end{equation}
where $\lambda^* = (\lambda_2, \lambda_3, \ldots) \vdash r$ is the diagram $\lambda$ minus its first row.
\end{corollary}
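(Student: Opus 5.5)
We apply Theorem~\ref{thm: equivalent asymptotique r fini pour les caractères suffisamment de points fixes} and then treat the dimension factor $d_\lambda$ separately. Write $f = f_1(\sigma_n)$ and $L = \ln(n/f)$, so that $t_n = (\ln n + c + o(1))/L$. First we need $f<n$ so that $t_n$ is well defined; we assume this. The case $r=0$ is trivial, since then $d_\lambda=1$ and $\chi^\lambda=1$, which matches $e^{0}/0!\cdot d_{\emptyset}=1$.

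\emph{Step 1: the character power.} By Theorem~\ref{thm: equivalent asymptotique r fini pour les caractères suffisamment de points fixes}, uniformly in $\lambda$ with $\lambda_1=n-r$ (and for $n$ large, so that $\chi^\lambda(\sigma_n)>0$),
\begin{equation*}
\ln \chi^\lambda(\sigma_n) = -L\pg r + o_r(1/\ln n)\pd.
\end{equation*}
Multiplying by $t_n$ gives
\begin{equation*}
t_n\ln\chi^\lambda(\sigma_n) = -(\ln n + c + o(1))\pg r+o_r(1/\ln n)\pd = -r\ln n - rc + o_r(1).
\end{equation*}
The cross term $\ln n\cdot o(1/\ln n)$ is $o(1)$; this is precisely why the theorem was stated with an error $o(1/\ln n)$ in the exponent. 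Note that the factor $L$ cancels exactly, so it does not matter whether $L$ is small (when $f$ is close to $n$) or large (when $f$ is close to $\sqrt n$). Hence
\begin{equation*}
\chi^\lambda(\sigma_n)^{t_n} = n^{-r}e^{-rc}(1+o_r(1)).
\end{equation*}

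\emph{Step 2: the dimension.} By the hook length formula, the hooks of boxes in rows $2,3,\dots$ of $\lambda$ coincide with the hooks of $\lambda^*$. The hooks of the first-row boxes are $\lambda_1 - j + 1 + (\text{column-}j\text{ height in }\lambda^*)$. Their product is $(n-r)!\,(1+O_r(1/n))$, since at most $\lambda_2\le r$ columns are affected, each by a factor $1+O_r(1/n)$. Therefore
\begin{equation*}
d_\lambda = \frac{n!}{(n-r)!\,\prod_{\square\in\lambda^*}h(\square)}(1+O_r(1/n)) = \frac{n^r}{r!}d_{\lambda^*}(1+o_r(1)),
\end{equation*}
using $d_{\lambda^*}=r!/\prod h$. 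Multiplying this with the result of Step 1 gives the claimed limit $\frac{e^{-rc}}{r!}d_{\lambda^*}$, uniformly in $\lambda$ because there are finitely many possible $\lambda^*\vdash r$.

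The only delicate point is Step 1: making sure that the $o(1/\ln n)$ error, once multiplied by $t_n\ln(n/f)=\ln n+O(1)$, stays $o(1)$. This is automatic from the form of Theorem~\ref{thm: equivalent asymptotique r fini pour les caractères suffisamment de points fixes}, so the main work has already been done in that theorem. Uniformity is automatic as well, since the error terms there are uniform over $\lambda$.
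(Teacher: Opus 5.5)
Your proof is correct and matches the paper's argument. The paper likewise multiplies $d_\lambda \sim \frac{n^r}{r!}d_{\lambda^*}$ (it cites Lemma~\ref{lem: asymptotic hook length formula}, which you re-derive directly from the hook length formula) by $\chi^\lambda(\sigma_n)^{t_n} = e^{-r(\ln n + c + o(1))} \sim n^{-r}e^{-rc}$, which comes from the $(f_1(\sigma_n)/n)^{r+o_r(1/\ln n)}$ character estimate.
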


\begin{theorem}\label{thm: borne asymptotique r fini pour les caractères peu de points fixes} 
Let $r\geq 0$. For each $n\geq 1$, let $\sigma_n\in \kS_n$.
Assume that $f_1(\sigma_n) = o(\sqrt{n})$ and $f_2(\sigma_n) = o(n)$. Then as $n\to \infty$, uniformly over all $\lambda \vdash n$ such that $\lambda_1 = n-r$, we have
\begin{equation}
    \chi^{\lambda}(\sigma_n) = o_r(n^{-r/2}).
\end{equation}
\end{theorem}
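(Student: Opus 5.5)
The plan is to deduce the statement from Theorem~\ref{thm: character bound finite level general with M sigma} together with the growth of $d_\lambda$ at fixed level. For $\lambda_1 = n-r$ the hook length formula gives $d_\lambda = \binom{n}{r} d_{\lambda^*}\prod(\text{correction})\asymp_r n^r$. More precisely, $d_\lambda \sim d_{\lambda^*} n^r/r!$ uniformly over the finitely many possible $\lambda^*\vdash r$, and $d_\lambda \geq c_r n^r$ for $n$ large. Theorem~\ref{thm: character bound finite level general with M sigma} gives $|\ch^\lambda(\sigma_n)| \lesssim_r M(\sigma_n)^r$. It therefore suffices to show $M(\sigma_n) = o(\sqrt n)$, since then $|\chi^\lambda(\sigma_n)| \lesssim_r M(\sigma_n)^r n^{-r} = o_r(n^{-r/2})$. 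The case $r=0$ is excluded implicitly, since for $r=0$ we have $\chi=1$; the meaningful statement is for $r\geq 1$, and I will treat that case.

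Next I control $M(\sigma_n)=\max_i f_i(\sigma_n)^{1/i}$ term by term. For $i=1$, $f_1^{1}=o(\sqrt n)$ by hypothesis. For $i=2$, $f_2^{1/2} = o(\sqrt n)$ since $f_2 = o(n)$. For $i\geq 3$, we have $f_i \leq n/i$, so $f_i^{1/i}\leq (n/3)^{1/3} = O(n^{1/3}) = o(\sqrt n)$, because $x\mapsto (n/x)^{1/x}$ is decreasing for $x\geq 3$ when $n$ is large (or simply $f_i^{1/i}\leq n^{1/i}\leq n^{1/3}$). Hence $M(\sigma_n)=o(\sqrt n)$, uniformly in the sense needed: the bound depends only on $\sigma_n$, not on $\lambda$.

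Combining these, uniformly over $\lambda$ with $\lambda_1=n-r$,
\[
|\chi^\lambda(\sigma_n)| = \frac{|\ch^\lambda(\sigma_n)|}{d_\lambda} \lesssim_r \frac{M(\sigma_n)^r}{n^r} = o_r\pg n^{-r/2}\pd.
\]
The only step requiring care is the uniform lower bound $d_\lambda \gtrsim_r n^r$. This follows from the hook length formula: the hooks in the first row have lengths at most $n-r+\lambda_2\le n$ and at least $(n-r-j+1)$ for $j$ beyond the columns of $\lambda^*$, giving $d_\lambda = \frac{n!}{\prod h} \geq \frac{n!}{(n-r)!\,\prod_{\lambda^*} h \cdot \prod(\text{at most } r \text{ extra factors} \le n+1)/\prod(\ldots)}$. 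More simply, one uses the known equivalent $d_\lambda \sim n^r d_{\lambda^*}/r!$, with only finitely many $\lambda^*$.

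The real difficulty is hidden in Theorem~\ref{thm: character bound finite level general with M sigma}, which is assumed here. Given that theorem, the statement is a short corollary.
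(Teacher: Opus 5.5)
Your proposal is correct and follows the paper's proof: you show $M(\sigma_n)=o(\sqrt n)$ by bounding $f_1$, $f_2^{1/2}$, and $f_i^{1/i}\le n^{1/3}$ for $i\ge 3$, then apply Theorem~\ref{thm: character bound finite level general with M sigma} together with $d_\lambda\asymp_r n^r$ (Lemma~\ref{lem: asymptotic hook length formula}), which the paper uses only implicitly. You are also right that $r=0$ must be excluded, since there $\chi^{(n)}\equiv 1$. Your garbled hook-length display can simply be dropped, because that lemma already gives the uniform lower bound on $d_\lambda$.
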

\begin{corollary}\label{cor: equivalent asymptotique d lambda fois caractère à une certaine puissance r fini pour les caractères peu de points fixes} 
   Let $r\geq 0$. For each $n\geq 1$, let $\sigma_n\in \kS_n$.
Assume that $f_1(\sigma_n) = o(\sqrt{n})$ and $f_2(\sigma_n) = o(n)$. Then as $n\to \infty$, uniformly over all $\lambda \vdash n$ such that $\lambda_1 = n-r$, we have
\begin{equation}
    d_\lambda \chi^{\lambda}(\sigma_n)^{2}\to 0.
\end{equation}
\end{corollary}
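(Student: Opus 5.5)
The statement to prove is Corollary~\ref{cor: equivalent asymptotique d lambda fois caractère à une certaine puissance r fini pour les caractères peu de points fixes}, and the plan is to deduce it directly from Theorem~\ref{thm: borne asymptotique r fini pour les caractères peu de points fixes} together with an upper bound on $d_\lambda$ for level-$r$ diagrams.

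First I bound the dimension. For $\lambda\vdash n$ with $\lambda_1=n-r$, the hook length formula (or the branching rule) gives
\begin{equation*}
d_\lambda \leq \binom{n}{r} d_{\lambda^*} \leq \frac{n^r}{r!}\, d_{\lambda^*} \leq n^r ,
\end{equation*}
using $d_{\lambda^*}\leq \sqrt{r!}$. One quick justification: a standard Young tableau of $\lambda$ is determined by choosing which $r$ entries lie outside the first row, together with a standard filling of $\lambda^*$ by those entries. Hence $d_\lambda \lesssim_r n^r$ uniformly over all such $\lambda$.

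Second, Theorem~\ref{thm: borne asymptotique r fini pour les caractères peu de points fixes} applies under the same hypotheses, $f_1(\sigma_n)=o(\sqrt n)$ and $f_2(\sigma_n)=o(n)$. It gives $|\chi^\lambda(\sigma_n)|\le \varepsilon_n n^{-r/2}$ uniformly over level-$r$ diagrams, for some $\varepsilon_n\to 0$ depending only on $r$ and the sequence. Therefore
\begin{equation*}
d_\lambda\,\chi^\lambda(\sigma_n)^2 \lesssim_r n^r\cdot \varepsilon_n^2\, n^{-r} = \varepsilon_n^2 \to 0,
\end{equation*}
uniformly in $\lambda$.

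Finally, the case $r=0$ needs separate handling. Here $\lambda$ is the trivial representation and $d_\lambda\chi^\lambda(\sigma_n)^2=1$, so the conclusion can only hold for $r\geq 1$, and the statement should be read that way. Similarly, Theorem~\ref{thm: borne asymptotique r fini pour les caractères peu de points fixes} with $r=0$ would claim $\chi^{(n)}(\sigma_n)=o(1)$, which is false since that character is identically $1$, so it too is only meaningful for $r\geq 1$.

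There is no real obstacle here. The only point needing care is uniformity: the $o_r$ in the theorem must be uniform over all $\lambda$ of level $r$, which is exactly how the theorem is stated, and the dimension bound is uniform as well.
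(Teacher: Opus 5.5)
Your proof is correct and is the paper's argument: combine the preceding bound $\chi^\lambda(\sigma_n)=o_r(n^{-r/2})$ with $d_\lambda\le n^r$ for level-$r$ diagrams. Your caveat is also right. For $r=0$ the trivial character gives $d_\lambda\chi^\lambda(\sigma_n)^2=1$, so both this corollary and the underlying theorem only make sense for $r\geq 1$, which the paper's one-line proof tacitly assumes.
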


\begin{remark}
    It is not hard to check that the assumptions “$f_1(\sigma_n)/\sqrt{n}\to \infty$” and “$f_1(\sigma_n)=o(\sqrt{n})$ and $f_2(\sigma_n) = o(n)$” in the results above are sharp, by looking at diagrams with two rows, i.e.\ diagrams $\lambda = (n-r,r)$.
\end{remark}

For applications to cutoff profiles, we will only need Theorem~\ref{thm: equivalent asymptotique r fini pour les caractères suffisamment de points fixes}. 

\medskip

Asymptotic equivalents of characters are useful to obtain cutoff profiles, but also on their own (even if one is not able to bound characters uniformly) to obtain information on the statistics of random permutations. Fulman \cite[Theorem~4.5]{Fulman2024commutators} proved this way that asymptotically the number of fixed points of a product of about $(1/k)n(\ln n + c)$ $k$-cycles is $\Poiss(1+e^{-c})$ (we note that similar ideas were soon after independently developed by Nestoridi and Yan \cite{NestoridiYan2025BiasedTranspositions} to compute the number of fixed points for biased transpositions), and Theorem~\ref{thm: points fixes pour les CC} generalizes this to conjugacy classes. Fulman's results were also generalized by Arcona \cite{Arcona2026smallcyclestructure}, who obtained limiting results for the structure of small cycles in a product of $k$-cycles of $\kS_n$. Corollary~\ref{cor: equivalent asymptotique d lambda fois caractère à une certaine puissance r fini pour les caractères suffisamment de points fixes} may be used to further generalize these results to conjugacy classes with $\omega(\sqrt{n})$ fixed points. On the other hand, the small cycle structure was proved to be close to that of a uniform permutation for some products of two conjugacy invariant random permutations in \cite{KammounMaida2020conjugacyinvariant} and in \cite{BudzinskiCurienPetri2019}.
We believe that combining Arcona's arguments with Theorem~\ref{thm: borne asymptotique r fini pour les caractères peu de points fixes} would enable recovering these results.

Since its proof is very short, we include here the generalization of \cite[Theorem~4.5]{Fulman2024commutators}, which shows that a product of about $\frac{\ln n + a + o(1)}{\ln(n/f_1(\cC^{(n)}))}$ independent $\Unif_{\cC^{(n)}}$-permutations has in law about $\Poiss(1+e^{-a})$ fixed points.

\begin{theorem}\label{thm: points fixes pour les CC}
For each $n$ let $ \cC^{(n)} \in\Conj^*(\kS_n)$, and assume that $f_1(\cC^{(n)})/\sqrt{n}\to \infty$ as $n\to \infty$. Let  $a\in \bbR$. Assume that there exist $S\subset \bbN^*$ and a sequence $(t_n)_{n\in S}$ of integers such that $t_n = t_{\cC^{(n)}} \pg 1 + \frac{a+o(1)}{\ln n} \pd$, and let, for $n\in S$, $Y_n$ be a $\Unif_{\cC^{(n)}}^{*t_n}$ random variable. Then, as $S\ni n\to \infty$, $f_1(Y_n)$ converges in distribution to $\Poiss\pg 1+e^{-a}\pd$.
\end{theorem}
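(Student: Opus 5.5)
We will use the method of moments, working with falling factorial moments of $f_1(Y_n)$. A random variable $Z\sim\Poiss(\alpha)$ satisfies $\bbE[(Z)_m]=\alpha^m$, and the Poisson law is determined by its moments. It therefore suffices to show that for each fixed $m\geq 0$,
\begin{equation*}
\bbE\cg (f_1(Y_n))_m\cd \to (1+e^{-a})^m \quad \text{ as } S\ni n\to\infty.
\end{equation*}

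The function $\sigma\mapsto (f_1(\sigma))_m$ counts injective $m$-tuples of fixed points. It is the character of the permutation representation on injective $m$-tuples, $\mathrm{Ind}_{\kS_{n-m}}^{\kS_n}\mathbf{1}$. By Young's rule this representation decomposes as $\bigoplus_{\lambda\vdash n} K_\lambda\, \lambda$, where the multiplicities $K_\lambda$ are nonzero only for $\lambda_1\geq n-m$. Moreover, for $n\geq 2m$, the multiplicity of $\lambda$ with $\lambda^*=\mu\vdash r$ ($r\leq m$) equals the number of ways to build $\mu$ from the restriction structure, namely $\binom{m}{r} d_\mu$. This is the classical identity $(f_1)_m=\sum_{r\le m}\binom{m}{r}\sum_{\mu\vdash r} d_\mu \ch^{(n-r,\mu)}$. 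To double-check it, specialize to $\sigma=\Id$, where it reads $(n)_m=\sum_r \binom{m}{r}\sum_\mu d_\mu d_{(n-r,\mu)}$. Alternatively, since we only need the limit, we may note that these multiplicities are independent of $n$ for $n$ large and then identify them by an independent check.

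Since $Y_n$ is a product of independent uniform elements of the conjugacy class, the characters are multiplicative in expectation: $\bbE[\ch^\lambda(Y_n)] = d_\lambda\,\chi^\lambda(\cC^{(n)})^{t_n}$. Hence
\begin{equation*}
\bbE\cg (f_1(Y_n))_m\cd = \sum_{r=0}^m \binom{m}{r}\sum_{\mu\vdash r} d_\mu\, d_{(n-r,\mu)}\chi^{(n-r,\mu)}(\cC^{(n)})^{t_n}.
\end{equation*}
Set $f=f_1(\cC^{(n)})$. The hypothesis $t_n = t_{\cC^{(n)}}(1+\frac{a+o(1)}{\ln n})$ gives $t_n=\frac{\ln n + a+o(1)}{\ln(n/f)}$. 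The sum is finite, and Corollary~\ref{cor: equivalent asymptotique d lambda fois caractère à une certaine puissance r fini pour les caractères suffisamment de points fixes} applies with $c=a$, so each term converges to $\binom{m}{r} d_\mu \frac{e^{-ra}}{r!}d_\mu$. Summing over $\mu\vdash r$ and using $\sum_{\mu\vdash r}d_\mu^2=r!$, the limit is
\begin{equation*}
\sum_{r=0}^m\binom{m}{r}e^{-ra}=(1+e^{-a})^m.
\end{equation*}
The method of moments then concludes the proof.

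There is one technical caveat. Corollary~\ref{cor: equivalent asymptotique d lambda fois caractère à une certaine puissance r fini pour les caractères suffisamment de points fixes} requires $t_n$ of the stated form, which holds here. It also requires $\ln(n/f)$ to be bounded away from $0$ so that the $o(1)$ term transfers correctly; if $f/n\to1$ this needs a short check, but the corollary is stated for general such sequences, so we simply invoke it. The main obstacle is pinning down the multiplicities $\binom{m}{r}d_\mu$ in the decomposition of $(f_1)_m$. We would derive them by iterating the Pieri rule: restrict from $\kS_n$ to $\kS_{n-m}\times\kS_1^m$ and count the boxes added outside the first row.
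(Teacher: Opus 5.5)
Your proposal is correct and follows essentially the paper's route: the paper reruns Fulman's method-of-moments argument, expanding a fixed-point moment function into finitely many characters of bounded level, using $\bbE[\ch^\lambda(Y_n)]=d_\lambda\chi^\lambda(\cC^{(n)})^{t_n}$, and passing to the limit termwise with Fulman's $k$-cycle asymptotic replaced by the Corollary $d_\lambda\chi^\lambda(\sigma_n)^{t_n}\to \frac{e^{-rc}}{r!}d_{\lambda^*}$, exactly as you do with the multiplicities $\binom{m}{r}d_\mu$ (which are indeed $d_{\lambda/(n-m)}$ by Frobenius reciprocity and branching). Your caveat about $\ln(n/f)$ is moot: in the Corollary's proof the factor $\ln(n/f_1)$ cancels exactly, and the underlying estimate $\chi^\lambda(\sigma_n)=(f_1/n)^{r+o(1/\ln n)}$ is proved including the regime $f_1/n\to 1$.
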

\begin{proof}
The proof of \cite[Theorem~4.5]{Fulman2024commutators} carries verbatim, except for the use of \cite[Lemma~4.4]{Fulman2024commutators} to obtain Equation 12, which is replaced by Corollary~\ref{cor: equivalent asymptotique d lambda fois caractère à une certaine puissance r fini pour les caractères suffisamment de points fixes}.
\end{proof}

\subsection{The Murnaghan--Nakayama rule}
Our proofs rely on the Murnaghan--Nakayama rule (see for instance \cite[Theorem~3.10]{LivreMeliot2017RepresentationTheoryofSymmetricGroups}) which is an explicit combinatorial formula that enables computing characters.

We refer the reader unfamiliar with the Murnaghan--Nakayama rule (and the related notions of skew partition, ribbon, ribbon tableau, height) to a companion paper \cite[Section~3.1]{Teyssier2025SkewDimensionsAndCharacters}, which explains it with a fair amount of details and illustrations.

We use the following notation:
\begin{itemize}
    \item $d_{\lambda\backslash \mu}$ for the number of standard tableaux of a skew partition $\lambda \backslash \mu$;
    \item $\RT(\lambda, \alpha)$ for the set of ribbon tableaux of shape $\lambda$ and weight $\alpha$;
    \item $\RT(\lambda)$ for the set of all ribbon tableaux of shape $\lambda$ (and of any weight);
    \item $\height(\rho)$ for the height of a ribbon $\rho$;
    \item $\height(T) = \sum_{\rho\in T} \height(\rho)$ for the height of a ribbon tableau $T$.
\end{itemize}
The Murnaghan--Nakayama rule can then be stated as follows.
If $n\geq 1$, $\lambda\vdash n$, and $\sigma\in \mathfrak{S}_n$, writing $\alpha$ for the cycle lengths of $\sigma$ (in any order), we have
\begin{equation}
    \ch^\lambda(\sigma) = \sum_{T \in \RT(\lambda, \alpha)} (-1)^{\height(T)}.
\end{equation}

\subsection{Estimates on dimensions of representations}
There are many bounds on dimensions of irreducible representations in the literature. We will use the following one, which follows directly from the hook length formula. Recall that given a Young diagram $\lambda \vdash n$, we write $r(\lambda) = n- \lambda_1$.
\begin{lemma}[{\cite[Proposition~3.2]{Teyssier2020}}]\label{lem: asymptotic hook length formula}
    Let $r\geq 0$. As $n\to \infty$, uniformly over all $\lambda \vdash n$ such that $r(\lambda) = r$, we have
    \begin{equation}
            d_\lambda = \binom{n}{r} d_{\lambda^*} \pg 1 -\frac{r}{n} + O_r\pg \frac{1}{n^2}\pd\pd.
        \end{equation}
\end{lemma}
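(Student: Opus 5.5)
The statement is Lemma (asymptotic hook length formula): $d_\lambda = \binom{n}{r} d_{\lambda^*}(1 - r/n + O_r(1/n^2))$ uniformly over $\lambda\vdash n$ with $r(\lambda)=r$. I will prove it directly from the hook length formula, $d_\lambda = n!/\prod_{b\in\lambda} h(b)$, by splitting the product of hooks into the hooks of boxes in the first row and those of boxes below it.

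\textbf{Boxes below the first row.} A box below the first row lies in $\lambda^*$, and its hook in $\lambda$ equals its hook in $\lambda^*$, since the hook only looks right and down. So the product of these hooks is $\prod_{b\in\lambda^*}h_{\lambda^*}(b) = r!/d_{\lambda^*}$.

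\textbf{Boxes in the first row.} For the box in column $j$ of the first row, the hook is $\lambda_1 - j + 1 + \lambda'_j - 1$, where $\lambda'_j$ is the length of column $j$.
\begin{itemize}
\item For $j > \lambda_2$ we have $\lambda'_j=1$, so the hook is $\lambda_1-j+1$. These columns contribute $\prod_{j>\lambda_2}(\lambda_1-j+1) = (\lambda_1-\lambda_2)!$.
\item For $j\le \lambda_2$, with $m=\lambda_2\le r$, the hook is $\lambda_1-j+\lambda'_j = \lambda_1 - j + 1 + c_j$, where $c_j=\lambda'_j-1\ge1$ is the number of boxes of $\lambda^*$ in column $j$. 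Note $\sum_{j\le m} c_j = r$.
\end{itemize}
Hence
\[
d_\lambda = \frac{n!\, d_{\lambda^*}}{r!\,(\lambda_1-m)!\,\prod_{j=1}^m (\lambda_1-j+1+c_j)}
= \binom{n}{r} d_{\lambda^*}\cdot \frac{\lambda_1!}{(\lambda_1-m)!\prod_{j=1}^m(\lambda_1-j+1+c_j)},
\]
using $n!/(r!\,\lambda_1!) = \binom{n}{r}$.

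\textbf{Expanding the correction factor.} The correction factor is
\[
\prod_{j=1}^m \frac{\lambda_1-j+1}{\lambda_1-j+1+c_j}=\prod_{j=1}^m\Bigl(1+\frac{c_j}{\lambda_1-j+1}\Bigr)^{-1}.
\]
Since $m,c_j\le r$ and $\lambda_1 = n-r$, each factor is $1 - c_j/n + O_r(1/n^2)$. Here the shifts $-r-j+1$ in the denominator only affect the $1/n^2$ term, and $c_j^2/n^2$ is also $O_r(n^{-2})$. The product of at most $r$ such factors is therefore $1-\sum_j c_j/n + O_r(n^{-2}) = 1 - r/n + O_r(n^{-2})$.

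\textbf{Uniformity.} There are only finitely many possible $\lambda^*\vdash r$, hence finitely many tuples $(m,c_1,\dots,c_m)$, so the implied constant can be taken as a maximum over them.

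No step is a real obstacle. The only point requiring care is the bookkeeping showing that the $O(1/n^2)$ error does not pick up an extra $1/n$ term: the shifts by $j$ and $r$ in the denominators must contribute only at order $1/n^2$, which holds because $c_j\cdot(r+j)/n^2=O_r(n^{-2})$.
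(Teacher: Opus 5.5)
Your proof is correct and takes essentially the paper's route. The paper gives no proof of its own: it cites Proposition~3.2 of \cite{Teyssier2020} and notes that the lemma follows directly from the hook length formula, and your split of the hooks into those of $\lambda^*$ and those of the first-row boxes (with the $\prod_{j\le \lambda_2}(1+c_j/(\lambda_1-j+1))^{-1}$ correction) is exactly that direct derivation.
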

We also prove a second order expansion on descending factorials.
Given two integers $0\leq r \leq n$ we write $n^{\downarrow r} = r! \binom{n}{r} = n(n-1)\cdots(n-r+1)$. For $m\geq 0$, denote the $m$-th triangular number by $T(m) = \sum_{i=1}^m i = \frac{m(m+1)}{2}$.
\begin{lemma}
    Let $r\geq 1$. As $n\to \infty$ we have
    \begin{equation}
        n^{\downarrow r} = n^r \pg 1 - \frac{T(r-1)}{n} + O_r\pg \frac{1}{n^2} \pd\pd.
    \end{equation}
\end{lemma}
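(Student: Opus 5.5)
The plan is to write the descending factorial as a product and expand it to second order. By definition,
\begin{equation*}
    n^{\downarrow r} = \prod_{i=0}^{r-1} (n-i) = n^r \prod_{i=0}^{r-1}\pg 1 - \frac{i}{n}\pd .
\end{equation*}
So it suffices to show that the finite product $P_r(n) := \prod_{i=0}^{r-1}(1-i/n)$ equals $1 - T(r-1)/n + O_r(1/n^2)$.

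I will expand $P_r(n)$ as a polynomial in $x = 1/n$ of degree at most $r$. Its coefficients are the elementary symmetric functions of $\{0,1,\ldots,r-1\}$, with alternating signs. That is,
\begin{equation*}
    P_r(n) = \sum_{j=0}^{r} (-1)^j e_j(0,1,\ldots,r-1)\, n^{-j}.
\end{equation*}
The constant term is $e_0 = 1$. The linear coefficient is $-e_1 = -\sum_{i=0}^{r-1} i = -T(r-1)$. All the remaining terms, $j\geq 2$, have coefficients bounded by a constant depending only on $r$; for instance $e_j \leq \binom{r}{j} r^j$. Since $n^{-j} \leq n^{-2}$ for $j \geq 2$ and $n\geq 1$, their total is $O_r(1/n^2)$. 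Multiplying back by $n^r$ gives the claimed expansion.

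An alternative route is induction on $r$, using $n^{\downarrow (r+1)} = n^{\downarrow r}(n-r)$ and $T(r) = T(r-1) + r$. The product $\pg 1 - T(r-1)/n + O_r(n^{-2})\pd\pg 1 - r/n\pd$ produces a cross term $rT(r-1)/n^2$, which is absorbed into the error.

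There is no real obstacle here. The only point needing care is that the implicit constant depends on $r$ but not on $n$, which holds because the polynomial has at most $r+1$ terms with $r$-dependent coefficients.
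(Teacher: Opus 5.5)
Your proposal is correct and matches the paper's proof: the paper likewise writes $n^{\downarrow r}/n^r=\prod_{i=0}^{r-1}(1-i/n)$ and expands this finite product. The linear term is $-\sum_{i=0}^{r-1} i/n = -T(r-1)/n$, and all higher-order terms are absorbed into $O_r(1/n^2)$.
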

\begin{proof}
    Expanding the product (which has finitely many terms), and using that $r = O_r(1)$, we get
\begin{equation}\label{lem: estimate on descending factorials}
    \frac{n^{\downarrow r}}{n^r} = \prod_{i=0}^{r-1}\pg 1-\frac{i}{n}\pd = 1 - \frac{\sum_{i=0}^{r-1}i}{n} + O_r\pg \frac{1}{n^2} \pd = 1 - \frac{T(r-1)}{n} + O_r\pg \frac{1}{n^2} \pd. \qedhere
\end{equation}
\end{proof}
We deduce the following.
\begin{lemma}\label{lem: asymp of dimension ratio with r K f}
    Let $r,K,f$ be three integers such that $0\leq K \leq r\leq f/2$. As $n\to \infty$, uniformly over all $\lambda \vdash n$ such that $r(\lambda)= r$ and all $\mu \vdash f$ such that $\mu \subset \lambda$ and $r(\mu) = r - K$, we have
    \begin{equation}
     \frac{d_{\mu}}{d_\lambda} \asymp_r \pg\frac{f}{n} \pd^r f^{-K};
    \end{equation}
and moreover, if $K=0$ (that is, if $\mu^* = \lambda^*$), we have, denoting $k = n-f$, 
\begin{equation}
    \frac{d_{\mu}}{d_\lambda} = \pg \frac{f}{n} \pd^r \pg 1 - T(r)\frac{k}{fn} + O_r\pg \frac{1}{f^2}\pd \pd.
\end{equation}
\end{lemma}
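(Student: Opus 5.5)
The plan is to reduce both estimates to the hook length formula, treating the first row of each diagram separately from the rest. That is, compare $d_\mu$ with $\binom{f}{r-K} d_{\mu^*}$ and $d_\lambda$ with $\binom{n}{r}d_{\lambda^*}$.

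\textbf{First estimate.} Let $r' = r-K$ and $\ell$ the number of rows of $\mu$. The hook length formula gives
\[
\frac{d_\mu}{\binom{f}{r'} d_{\mu^*}} = \prod_{j\ge 2}\frac{\mu_1-\mu_j+j-1}{\mu_1+j-1},
\]
since only the hooks of the first-row cells lying above the columns of $\mu^*$ differ from those of a single row. This product has at most $r'+1\le r+1$ factors, each at most $1$. Since $f\ge 2r$ we have $\mu_1 = f-r' \ge r$, so $\mu_1-\mu_j \ge \mu_1 - r \ge 0$. Hence each factor is bounded below by a positive constant depending only on $r$, and
\[
d_\mu \asymp_r \binom{f}{r'} d_{\mu^*}.
\]
The same holds for $\lambda$, which is also Lemma~\ref{lem: asymptotic hook length formula}. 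Next, $d_{\mu^*}$ and $d_{\lambda^*}$ lie in $[1, r!]$. Moreover $\binom{f}{r'}\asymp_r f^{r'}$ because $f\ge 2r'$, and $\binom{n}{r}\asymp_r n^r$. Dividing gives
\[
\frac{d_\mu}{d_\lambda}\asymp_r \frac{f^{r-K}}{n^r} = \pg\frac{f}{n}\pd^r f^{-K}.
\]
This argument is uniform in $f$ and does not need $f\to\infty$.

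\textbf{Second estimate ($K=0$).} Now $\mu^*=\lambda^*$, so the factors $d_{\mu^*}=d_{\lambda^*}$ cancel exactly. Apply Lemma~\ref{lem: asymptotic hook length formula} to both $\mu$ (size $f$) and $\lambda$ (size $n$) to get
\[
\frac{d_\mu}{d_\lambda} = \frac{f^{\downarrow r}}{n^{\downarrow r}}\cdot\frac{1-r/f+O_r(f^{-2})}{1-r/n+O_r(n^{-2})}.
\]
Expand the descending factorials with the second-order lemma:
\[
\frac{f^{\downarrow r}}{n^{\downarrow r}} = \pg\frac fn\pd^r\pg 1 - T(r-1)\pg\tfrac1f-\tfrac1n\pd + O_r(f^{-2})\pd,
\]
using $f\le n$ so that $n^{-2}\le f^{-2}$ and cross terms are $O_r(f^{-2})$. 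The second quotient equals $1 - r\pg\tfrac1f-\tfrac1n\pd + O_r(f^{-2})$. Multiplying the two, and using $T(r-1)+r = T(r)$ together with $\frac1f-\frac1n = \frac{k}{fn}$, gives the claimed expansion.

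\textbf{Main obstacle.} The only delicate point is uniformity when $f$ stays bounded, where Lemma~\ref{lem: asymptotic hook length formula} is not literally applicable to $\mu$. In that regime $O_r(f^{-2})$ is $O_r(1)$, and $T(r)k/(fn)\le T(r)/f$ is also $O_r(1)$. So the second statement reduces to $d_\mu/d_\lambda = (f/n)^r(1+O_r(1))$, which is exactly the first estimate with $K=0$. Therefore I would split into $f\ge f_0(r)$, where the expansion is used, and $f<f_0(r)$, where the first part suffices.
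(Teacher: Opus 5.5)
Your proof is correct and follows the same route as the paper. You compare $d_\mu$ and $d_\lambda$ with $\binom{f}{r-K}d_{\mu^*}$ and $\binom{n}{r}d_{\lambda^*}$, cancel $d_{\mu^*}=d_{\lambda^*}$ when $K=0$, expand the descending factorials, and conclude with $T(r-1)+r=T(r)$ and $\frac1f-\frac1n=\frac{k}{fn}$. Your exact first-row hook product and the split at $f_0(r)$ only add rigour to the uniformity in $f$ (including bounded $f$), which the paper leaves implicit when it applies Lemma~\ref{lem: asymptotic hook length formula} to $\mu\vdash f$.
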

\begin{proof}
    Uniformly over all $\lambda$ and $\mu$ as above, by Lemma~\ref{lem: asymptotic hook length formula} we have $d_\lambda \asymp_r \binom{n}{r}\asymp_r  n^r$ and $d_\mu \asymp_r \binom{f}{r-K} \asymp_r f^{r-K}$, so $\frac{d_{\mu}}{d_\lambda} \asymp_r \pg f/n \pd^r f^{-K}$, which proves the first point. For the second point, assuming that $K=0$, we have $d_{\lambda^*} = d_{\mu^*}$, so by Lemma~\ref{lem: asymptotic hook length formula} we have
    \begin{equation}
        \frac{d_{\mu}}{d_\lambda} = \frac{\binom{f}{r}}{\binom{n}{r}}\pg 1 - r \pg \frac{1}{f}- \frac{1}{n}\pd + O_r\pg \frac{1}{f^2} \pd \pd.
    \end{equation}
Moreover, by Lemma~\ref{lem: estimate on descending factorials}, we have
\begin{equation}
   \frac{\binom{f}{r}}{\binom{n}{r}} =  \pg\frac{f}{n} \pd^{r} \pg 1 - T(r-1)\pg \frac{1}{f} - \frac{1}{n} \pd + O_r\pg \frac{1}{f^2} \pd \pd.
\end{equation}
The result follows, since $T(r-1) + r = T(r)$ and $\frac{1}{f} - \frac{1}{n} = \frac{k}{fn}$.
\end{proof}
\subsection{Upper configurations}

For the proofs it will be convenient to introduce a truncated version of $M(\sigma)$: for $\sigma \in \kS_n$, we set
\begin{equation}
     G(\sigma) = \max_{i\geq 2} f_i(\sigma)^{1/i}.
\end{equation}

\begin{lemma}\label{lem: technical lemma bound on S}
    Let $r\geq 1$.
    Let $n\geq 3r$. Let $\lambda \vdash n$ such that $r(\lambda) = r$. Let $\sigma \in \kS_n$ such that $f:= f_1(\sigma) \geq 2r$, and denote its cycle structure written in weakly increasing order by $\alpha$. Let $\mu \vdash f$ be a sub Young diagram of $\lambda$. Denote $K = |\lambda^*\backslash \mu^*|$. Assume that $K\geq 2$, and let $s=(s_i)_{i\geq 2}$ such that $\sum_{i\geq 2} is_i = K$. Denote the set of ribbon tableaux of $\lambda$ of weight $\alpha$, which cover $\lambda^*\backslash \mu^*$ with $s_i$ ribbons of length $i$ for each $i\geq 2$, by $S(\lambda, \alpha, \mu, s)$. Then
    \begin{equation}
        |S(\lambda, \alpha, \mu, s)| \lesssim_r
        f^{r-K} G(\sigma)^K \leq M(\sigma)^r.
    \end{equation}
\end{lemma}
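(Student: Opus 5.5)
The plan is to read off the structure of a ribbon tableau in $S(\lambda,\alpha,\mu,s)$ from the ordering of $\alpha$, and then count in two stages. Since $\alpha$ is written in weakly increasing order, the first $f$ ribbons all have length $1$, and together they fill a sub-diagram of $\lambda$ of size $f$. In the definition of $S(\lambda,\alpha,\mu,s)$ this sub-diagram is $\mu$, so the first $f$ steps form a standard tableau of $\mu$. The remaining ribbons all have length $\ge 2$ and they tile the skew shape $\lambda\backslash\mu$. This skew shape is the union of a segment of the first row and the $K$ boxes of $\lambda^*\backslash\mu^*$. We therefore bound $|S(\lambda,\alpha,\mu,s)|$ by
\[
d_\mu \times \#\{\text{ribbon tilings of } \lambda\backslash\mu \text{ compatible with } \alpha \text{ and } s\}.
\]
By Lemma~\ref{lem: asymptotic hook length formula} we have $d_\mu\asymp_r \binom{f}{r-K}\lesssim_r f^{r-K}$, using $r(\mu)=r-K$ and $f\ge 2r$.

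For the second factor, first choose which of the ribbons of length $\ge 2$ are the ones meeting $\lambda^*\backslash\mu^*$. A ribbon of length $i\ge 2$ corresponds to one of the $f_i(\sigma)$ cycles of length $i$ in the sequence $\alpha$. Exactly $s_i$ of these ribbons meet the lower part, so the number of choices of positions in the sequence is at most
\[
\prod_{i\ge 2}\binom{f_i(\sigma)}{s_i}\le \prod_{i\ge2} f_i(\sigma)^{s_i}\le \prod_{i\ge 2} G(\sigma)^{is_i}=G(\sigma)^K .
\]
Next, once these positions are fixed, the ribbons are added one at a time in the order given by $\alpha$. Each added ribbon must be such that the current shape remains a Young diagram. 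A ribbon of given length added to a given shape is determined by its lower-left end, which must lie just outside an outer corner. The current shape always contains $\mu$ and is contained in $\lambda$, so it has at most $r+1$ rows and hence $O_r(1)$ possible ends.

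Moreover, a ribbon that is declared not to meet $\lambda^*\backslash\mu^*$ must lie entirely in the first row, so it is forced to be appended at the right end of the first row: it has exactly one option. Only the declared lower ribbons have a genuine choice, and each ribbon among them covers at least one lower box. Hence there are at most $K\le r$ of them, each with $O_r(1)$ options, giving $O_r(1)$ tilings in total once the positions are chosen. Multiplying the three factors gives $|S(\lambda,\alpha,\mu,s)|\lesssim_r f^{r-K}G(\sigma)^K$.

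The final inequality is immediate. We have $f=f_1(\sigma)\le M(\sigma)$ and $G(\sigma)\le M(\sigma)$, and $0\le K\le r$. Therefore
\[
f^{r-K}G(\sigma)^K\le M(\sigma)^{r-K}M(\sigma)^K=M(\sigma)^r .
\]

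The step needing the most care is the claim that non-lower ribbons are forced. The concern is a ribbon that lies partly in the first row and partly below, together with the bookkeeping of ``covers $\lambda^*\backslash\mu^*$ with $s_i$ ribbons of length $i$''. I would define a ribbon to be lower if it contains at least one box of $\lambda^*\backslash\mu^*$, regardless of how much of the first row it also uses. Every remaining ribbon then sits in the first row, and the counting above goes through unchanged.
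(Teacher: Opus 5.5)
Your proof is correct and follows essentially the same route as the paper: bound $|S(\lambda,\alpha,\mu,s)|$ by $d_\mu\prod_{i\ge 2}\binom{f_i(\sigma)}{s_i}\cdot O_r(1)$, then use $\prod_{i\ge2} f_i(\sigma)^{s_i}\le G(\sigma)^K$ and $d_\mu\lesssim_r f^{r-K}$, and finish with $\max(f,G(\sigma))\le M(\sigma)$. The one difference is how the straddling concern is handled. The paper rules it out at the start: $\mu_1\ge f-r\ge r\ge\lambda_2$, so the first-row part of $\lambda\backslash\mu$ is disconnected from $\lambda^*\backslash\mu^*$ and no ribbon can meet both. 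You instead make the count robust to straddling by your definition of ``lower'', and you also justify the $O_r(1)$ tilings (each ribbon determined by its bottom row) more explicitly than the paper does.
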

\begin{proof}
First, since $f \geq 2r$ (and in particular $f \geq r + \lambda_2$) we have $\mu_1 \geq \lambda_2$, so the first row of the skew diagram $\nu := \lambda \backslash \mu$ is disconnected from the rest, as illustrated in Figure~\ref{fig: skew diagram 1}. 
\begin{figure}[!ht]
    \centering
{
\ytableausetup{boxsize=1em}
\begin{ytableau}
    \none && \\
    \none &*(gray!60)&&\\
    \none &*(gray!60)&*(gray!60)&*(gray!60)&*(gray!60)&&\\
    \none &*(gray!60)&*(gray!60)&*(gray!60)&*(gray!60)&*(gray!60)&*(gray!60)&*(gray!60)&&&\\
    \none &*(gray!60)&*(gray!60)&*(gray!60)&*(gray!60)&*(gray!60)&*(gray!60)&*(gray!60)&*(gray!60)&*(gray!60)&*(gray!60)&*(gray!60)&*(gray!60)&*(gray!60)&*(gray!60)&*(gray!60)&*(gray!60)&*(gray!60)&*(gray!60)&*(gray!60)&*(gray!60)&*(gray!60)&*(gray!60)&*(gray!60)&*(gray!60)&*(gray!60)&*(gray!60)&*(gray!60)&*(gray!60)&*(gray!60)&*(gray!60)&*(gray!60)&*(gray!60)&*(gray!60)&&&&  \\
    \none
\end{ytableau}
}

    \caption{The diagram $\lambda = (37,10,6,3,2)\vdash 58$ consists of all cells, the sub-diagram $\mu = (33, 7, 4, 1)\vdash 45$ consists of all gray cells, and the skew diagram $\nu = \lambda\backslash \mu$ consists of all white cells.}
    \label{fig: skew diagram 1}
\end{figure}

Let us now (upper-)count how many diagrams there are in $S(\lambda, \alpha, \mu, s)$.
First, $\mu$ is covered by 1-ribbons in a standard way; there are $d_\mu$ such configurations (independently of the rest of the ribbon configuration).
Also, given  $s_i$ ribbons of length $i$ for each $i\geq 2$, there are $O_r(1)$ ways to cover $\lambda^* \backslash \mu^*$ with these ribbons, as illustrated in Figure~\ref{fig: upper skew covered by ribbons}.
\begin{figure}[!ht]
    \centering
{
\ytableausetup{boxsize=1em}
\begin{ytableau}
    \none &*(yellow!60)2&*(yellow!60)2\\
    \none &\none&*(orange!60)1&*(orange!60)1\\
    \none &\none&\none&\none&\none&*(green!60)3&*(green!60)3\\
    \none &\none&\none&\none&\none&\none&\none&\none&*(red!60)4&*(red!60)4&*(red!60)4
\end{ytableau};
\begin{ytableau}
    \none &*(green!60)3&*(green!60)3 \\
    \none &\none&*(orange!60)1&*(orange!60)1\\
    \none &\none&\none&\none&\none&*(yellow!60)2&*(yellow!60)2\\
    \none &\none&\none&\none&\none&\none&\none&\none&*(red!60)4&*(red!60)4&*(red!60)4
\end{ytableau};
\begin{ytableau}
    \none &*(green!60)3&*(green!60)3 \\
    \none &\none&*(yellow!60)2&*(yellow!60)2\\
    \none &\none&\none&\none&\none&*(orange!60)1&*(orange!60)1\\
    \none &\none&\none&\none&\none&\none&\none&\none&*(red!60)4&*(red!60)4&*(red!60)4
\end{ytableau}
}
\caption{All possible ribbon coverings of the skew diagram $ (10,6,3,2)\backslash (7,4,1)$, with weight $(2,2,2,3)$.
Colors are redundant.}
    \label{fig: upper skew covered by ribbons}
\end{figure}
Since there are $\prod_{i\geq 2} \binom{f_i(\sigma)}{s_i}$ ways to pick  $s_i$ ribbons of length $i$ for each $i\geq 2$, this proves that 
 \begin{equation}
        |S(\lambda, \alpha, \mu, s)| \lesssim_r d_\mu \prod_{i\geq 2} \binom{f_i(\sigma)}{s_i}.
    \end{equation}
Now, by the definition of $G(\sigma)$, we have
\begin{equation}
    \prod_{i\geq 2} \binom{f_i(\sigma)}{s_i} \leq \prod_{i\geq 2} f_i(\sigma)^{s_i} = \prod_{i\geq 2} (f_i(\sigma)^{1/i})^{is_i} \leq \prod_{i\geq 2} G(\sigma)^{is_i} = G(\sigma)^K,
\end{equation}
and $d_\mu \lesssim f^{r-K}$ (for example by Lemma~\ref{lem: asymptotic hook length formula}), which concludes the proof of the first point. The second point follows immediately, since $\max(f, G(\sigma))= M(\sigma)$ by definition.
\end{proof}

\begin{lemma}\label{lem: technical lemma bound on S prime} 
    Let $r\geq 1$.
    Let $n\geq 3r$. Let $\lambda \vdash n$ such that $r(\lambda) = r$. Let $\sigma \in \kS_n$ such that $f:= f_1(\sigma) \geq 2r$, and denote its cycle structure written in weakly increasing order by $\alpha$. Let $K\in \ag 0, 2,3,4, \ldots\ad$. Let $S'(\lambda, \alpha, K)$ be the set of all ribbon tableaux of $\lambda$ with weight $\alpha$ such that all but $K$ boxes in $\lambda^*$ are filled with 1-ribbons. Then
\begin{equation}
    |S'(\lambda, \alpha, K)| \lesssim_r
    f^{r-K} G(\sigma)^K \leq M(\sigma)^r.
\end{equation}
Moreover, $S'(\lambda, \alpha, 0)$ consists of $d_{(\lambda_1-k, \lambda_2, \lambda_3, \ldots)}$ ribbon tableaux of height 0.
\end{lemma}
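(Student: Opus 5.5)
The plan is to partition $S'(\lambda,\alpha,K)$ according to the region filled by $1$-ribbons, and then apply Lemma~\ref{lem: technical lemma bound on S} to each piece. Since $\alpha$ is written in weakly increasing order, the $f$ parts equal to $1$ come first. So in any $T\in S'(\lambda,\alpha,K)$ the cells labelled by $1$-ribbons form a sub-Young diagram $\mu\vdash f$ of $\lambda$, filled as a standard tableau. All remaining cells of $\lambda\backslash\mu$ are covered by ribbons of length at least $2$.

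The condition defining $S'$ says exactly that $|\lambda^*\backslash\mu^*| = K$, i.e.\ $r(\mu) = r-K$. The diagram $\mu$ is determined by $\mu^*\subset\lambda^*$, because $\mu_1 = f-|\mu^*|$ is then forced. There are only $O_r(1)$ such sub-diagrams $\mu^*$ of $\lambda^*$. We also check that $\mu_1 = f-(r-K)\geq 2r - r \geq \lambda_2$, so $\mu$ is a genuine diagram contained in $\lambda$, and the first row of $\lambda\backslash\mu$ is disconnected from $\lambda^*\backslash\mu^*$, as in the proof of Lemma~\ref{lem: technical lemma bound on S}.

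For $K\geq 2$, the ribbons of length $\geq 2$ covering $\lambda^*\backslash\mu^*$ have some type $s=(s_i)_{i\geq 2}$ with $\sum_i is_i = K$. There are $O_r(1)$ such $s$. We therefore get the decomposition
\begin{equation*}
S'(\lambda,\alpha,K) = \bigsqcup_{\mu^*}\bigsqcup_{s} S(\lambda,\alpha,\mu,s).
\end{equation*}
Each piece is bounded by $f^{r-K}G(\sigma)^K$ by Lemma~\ref{lem: technical lemma bound on S}. Summing over the $O_r(1)$ pieces gives the first bound, and the inequality $f^{r-K}G(\sigma)^K\leq M(\sigma)^r$ follows from $\max(f,G(\sigma)) = M(\sigma)$. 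The value $K=1$ is excluded in the statement; in any case it is impossible. Indeed, a single cell of $\lambda^*\backslash\mu^*$ cannot be covered by a ribbon of length $\geq 2$ lying inside $\lambda^*\backslash\mu^*$, and ribbons cannot cross over from the disconnected first-row part.

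For $K=0$, we have $\mu^*=\lambda^*$, hence $\mu = (\lambda_1-k,\lambda_2,\lambda_3,\ldots)$ with $k=n-f$. The cells of $\lambda\backslash\mu$ then form the last $k$ cells of the first row. The remaining ribbons (of lengths $\geq 2$, in the increasing order given by $\alpha$) must be placed successively in this single row. Each is therefore a horizontal strip of height $0$, and the placement is forced: each ribbon occupies the next cells to the right. Hence the tableau is determined by the standard filling of $\mu$, giving exactly $d_\mu$ tableaux, all of height $0$.

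The only step needing care is justifying that the $1$-ribbons form a Young sub-diagram $\mu$ and that, after them, no ribbon can mix first-row cells with $\lambda^*$ cells. Both follow from the increasing order of $\alpha$ and the disconnection $\mu_1\geq\lambda_2$ established above.
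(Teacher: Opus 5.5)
Your proposal is correct and follows essentially the same route as the paper: you decompose $S'(\lambda,\alpha,K)$ according to the shape $\mu$ of the region filled by $1$-ribbons and the type $s$ of the longer ribbons, bound each of the $O_r(1)$ pieces by Lemma~\ref{lem: technical lemma bound on S}, and for $K=0$ use that the longer ribbons are forced to lie flat, in order, at the end of the first row. Writing $S'$ as a disjoint union of the sets $S(\lambda,\alpha,\mu,s)$ is slightly cleaner than the paper's count, whose extra $O_r(1)$ factor for coverings of $\lambda^*\backslash\mu^*$ is already built into Lemma~\ref{lem: technical lemma bound on S}, and you also check $\mu_1\geq\lambda_2$ explicitly, which the paper does inside that lemma.
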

\begin{proof}
 There are $O_r(1)$ many diagrams $\mu \vdash f$  such that $\mu \subset \lambda$ and $r(\mu) = r(\lambda) - K$. For each such $\mu$ there are $O_K(1) = O_r(1)$ choices of tuples of non-negative integers $s = (s_2, s_3, \ldots)$ such that $\sum_{i\geq 2}is_i = K$. Finally, given such an $s$, there are $O_r(1)$ ribbon tableaux of $\lambda^*\backslash \mu^*$ having $s_i$ ribbons of length $i$ for each $i\geq 2$. 
We therefore have 
\begin{equation}
    |S'(\lambda, \alpha, K)| \lesssim_r \max_{\mu, s} |S(\lambda, \alpha, \mu, s)|,
\end{equation}
and hence the first claim follows from Lemma~\ref{lem: technical lemma bound on S}.

If $K = 0$, then the diagram $\mu = (\lambda_1-k, \lambda_2, \lambda_3, \ldots)$ is covered by 1-ribbons -- there are $d_\mu$ such possibilities since $d_\mu$ is the number of standard tableaux of $\mu$ -- and the ribbons of length at least 2 must be ordered at the end of the first row, as illustrated in Figure~\ref{fig: ribbon tableaux for K=0}, so there is only one possibility to place the ribbons of length at least 2.
\begin{figure}[!ht]
    \centering
{
\ytableausetup{boxsize=1em}
\begin{ytableau}
    \none &*(gray!60)5&*(gray!60)6 \\
    \none &*(gray!60)1&*(gray!60)2&*(gray!60)3&*(gray!60)4 &*(yellow!60)7&*(yellow!60)7&*(orange!60)8&*(orange!60)8&*(red!60)9&*(red!60)9&*(red!60)9\\
    \none 
\end{ytableau}
\begin{ytableau}
    \none &*(gray!60)4&*(gray!60)6 \\
    \none &*(gray!60)1&*(gray!60)2&*(gray!60)3&*(gray!60)5&*(yellow!60)7&*(yellow!60)7&*(orange!60)8&*(orange!60)8&*(red!60)9&*(red!60)9&*(red!60)9 \\
    \none 
\end{ytableau}
\begin{ytableau}
    \none &*(gray!60)4&*(gray!60)5 \\
    \none &*(gray!60)1&*(gray!60)2&*(gray!60)3&*(gray!60)6&*(yellow!60)7&*(yellow!60)7&*(orange!60)8&*(orange!60)8&*(red!60)9&*(red!60)9&*(red!60)9 \\
    \none 
\end{ytableau}

\begin{ytableau}
    \none &*(gray!60)3&*(gray!60)6 \\
    \none &*(gray!60)1&*(gray!60)2&*(gray!60)4&*(gray!60)5&*(yellow!60)7&*(yellow!60)7&*(orange!60)8&*(orange!60)8&*(red!60)9&*(red!60)9&*(red!60)9 \\
    \none 
\end{ytableau}
\begin{ytableau}
    \none &*(gray!60)3&*(gray!60)5 \\
    \none &*(gray!60)1&*(gray!60)2&*(gray!60)4&*(gray!60)6&*(yellow!60)7&*(yellow!60)7&*(orange!60)8&*(orange!60)8&*(red!60)9&*(red!60)9&*(red!60)9 \\
    \none 
\end{ytableau}
\begin{ytableau}
    \none &*(gray!60)3&*(gray!60)4 \\
    \none &*(gray!60)1&*(gray!60)2&*(gray!60)5&*(gray!60)6&*(yellow!60)7&*(yellow!60)7&*(orange!60)8&*(orange!60)8&*(red!60)9&*(red!60)9&*(red!60)9 \\
    \none 
\end{ytableau}

\begin{ytableau}
    \none &*(gray!60)2&*(gray!60)6 \\
    \none &*(gray!60)1&*(gray!60)3&*(gray!60)4&*(gray!60)5&*(yellow!60)7&*(yellow!60)7&*(orange!60)8&*(orange!60)8&*(red!60)9&*(red!60)9&*(red!60)9
\end{ytableau}
\begin{ytableau}
    \none &*(gray!60)2&*(gray!60)5 \\
    \none &*(gray!60)1&*(gray!60)3&*(gray!60)4&*(gray!60)6&*(yellow!60)7&*(yellow!60)7&*(orange!60)8&*(orange!60)8&*(red!60)9&*(red!60)9&*(red!60)9 
\end{ytableau}
\begin{ytableau}
    \none &*(gray!60)2&*(gray!60)4 \\
    \none &*(gray!60)1&*(gray!60)3&*(gray!60)5&*(gray!60)6&*(yellow!60)7&*(yellow!60)7&*(orange!60)8&*(orange!60)8&*(red!60)9&*(red!60)9&*(red!60)9 
\end{ytableau}

\begin{ytableau}
    \none 
\end{ytableau}
}
\caption{All possible ribbon coverings of the diagram $ (11,2)$, for $\alpha = (1,1,1,1,1,1,2,2,3)$. The ribbons of size 1 are filled with the numbers from 1 to 6, the first ribbon of size 2 is filled with 7's, the second is filled with 8's, and the ribbon of size 3 is filled with 9's. Colors are redundant.}
    \label{fig: ribbon tableaux for K=0}
\end{figure}
Also, all these ribbons are flat so the height of such a ribbon tableau is 0, and the second claim follows.
\end{proof}

\subsection{Proofs of Theorem~\ref{thm: character estimate many fixed points with error term}, Theorem~\ref{thm: equivalent asymptotique r fini pour les caractères suffisamment de points fixes}, and Corollary~\ref{cor: equivalent asymptotique d lambda fois caractère à une certaine puissance r fini pour les caractères suffisamment de points fixes}}
We can already prove a first block of character estimates.

\begin{proof}[Proof of Theorem~\ref{thm: character estimate many fixed points with error term}]
Denote the cycle structure of $\sigma$ written in weakly increasing order by $\alpha$. First, the assumption $f\geq \sqrt{n}$ ensures that $f\geq G(\sigma)$, i.e.\ that $M(\sigma) = f$. Also, we have 
\begin{equation}
    G(\sigma) = \max_{i\geq 2} f_i(\sigma)^{1/i} \leq \max_{i\geq 2} f_i(\sigma)^{1/2} \leq k^{1/2}.
\end{equation}
Therefore by Lemma~\ref{lem: technical lemma bound on S prime} we have
\begin{equation}
    \sum_{K=2}^r |S'(\lambda, \alpha, K)|  \lesssim_r \sum_{K=2}^r  f^{r-K} G(\sigma)^K =  \sum_{K=2}^r f^r \pg \frac{k^{1/2}}{f} \pd^K \lesssim_r f^r \pg \frac{k^{1/2}}{f} \pd^2 = f^r \frac{k}{f^2}.
\end{equation}
Moreover, since $f\geq 2r$, $\RT(\lambda, \alpha)$ is the disjoint union of the $S'(\lambda, \alpha, K)$ for $K \in \ag 0, 2,3, \ldots, r\ad$, and by Lemma~\ref{lem: technical lemma bound on S prime} again, the $d_{(\lambda_1-k, \lambda_2, \lambda_3, \ldots)}$ elements of $S'(\lambda, \alpha, 0)$ have height 0. Therefore
\begin{equation}
    \sum_{T \in S'(\lambda, \alpha, 0)}(-1)^{\height(T)} = d_{(\lambda_1-k, \lambda_2, \lambda_3, \ldots)},
\end{equation}
and by the triangle inequality and recalling that $d_\mu \asymp_r f^r$, we obtain
\begin{equation}
    \bg \sum_{K=2}^r \sum_{T \in S'(\lambda, \alpha, K)}(-1)^{\height(T)} \bd \leq \sum_{K=2}^r |S'(\lambda, \alpha, K)| \lesssim_r f^r  \frac{k}{f^2} \lesssim_r d_{(\lambda_1-k, \lambda_2, \lambda_3, \ldots)} \frac{k}{f^2}.
\end{equation}
We deduce from the Murnaghan--Nakayama rule that
\begin{equation}
\begin{split}
     \ch^\lambda(\sigma) 
     & = \sum_{T \in S'(\lambda, \alpha, 0)}(-1)^{\height(T)} + \sum_{K=2}^r \sum_{T \in S'(\lambda, \alpha, K)}(-1)^{\height(T)} \\
     & = d_{(\lambda_1-k, \lambda_2, \lambda_3, \ldots)} + d_{(\lambda_1-k, \lambda_2, \lambda_3, \ldots)}O_r\pg \frac{k}{f^2}\pd.
\end{split}
\end{equation}
Dividing both sides by $d_\lambda$ gives 
\begin{equation}
       \chi^\lambda(\sigma) = \frac{d_{(\lambda_1-k, \lambda_2, \lambda_3, \ldots)}}{d_\lambda}\pg 1 +O_r\pg \frac{k}{f^{2}} \pd \pd.
 \end{equation}
 The result follows, since  $\frac{d_{(\lambda_1-k, \lambda_2, \lambda_3, \ldots)}}{d_\lambda} = \pg \frac{f}{n} \pd^r \pg 1 + O_r\pg \frac{k}{f^2}\pd \pd$ by the second part of Lemma~\ref{lem: asymp of dimension ratio with r K f}.
\end{proof}
\begin{proof}[Proof of Theorem~\ref{thm: equivalent asymptotique r fini pour les caractères suffisamment de points fixes}]
    Denote $f = f_1(\sigma_n)$ and $k = n- f$. By Theorem~\ref{thm: character estimate many fixed points with error term}, we have
\begin{equation}
    \chi^\lambda(\sigma_n) = (f/n)^r(1+O_r(k/f^2)) = (f/n)^r e^{O_r(k/f^2)} = (f/n)^{r + O_r\pg \frac{k}{f^2 \ln(n/f)} \pd}.
\end{equation}
If $f\leq n^{2/3}$ then $\frac{k}{f^2 \ln(n/f)} \asymp \frac{n}{f^2 \ln n} = o(1/\ln n)$ since $f/\sqrt{n}\to \infty$ by assumption. If $f > n^{2/3}$ then, using that $\ln(n/f) = - \ln (1-k/n) \geq k/n$, we have $\frac{k}{f^2 \ln(n/f)} \leq \frac{n}{f^2} \leq \frac{n}{n^{4/3}} = o(1/\ln n)$.
This concludes the proof.
\end{proof}
\begin{proof}[Proof of Corollary~\ref{cor: equivalent asymptotique d lambda fois caractère à une certaine puissance r fini pour les caractères suffisamment de points fixes}]
    By Lemma~\ref{lem: asymptotic hook length formula} we have $d_\lambda \sim \frac{n^r}{r!}d_{\lambda^*}$. By Theorem~\ref{thm: equivalent asymptotique r fini pour les caractères suffisamment de points fixes}, we have
    \begin{equation}
\chi^{\lambda}(\sigma)^{\frac{(\ln n)+c+o(1)}{\ln(n/f_1(\sigma))}} = e^{-r((\ln n) + c + o(1))} \sim n^{-r} e^{-rc}.
    \end{equation}
Multiplying the two estimates gives the desired result.
\end{proof}

\subsection{Proofs of Theorem~\ref{thm: character bound finite level general with M sigma}, Theorem~\ref{thm: borne asymptotique r fini pour les caractères peu de points fixes}, and Corollary~\ref{cor: equivalent asymptotique d lambda fois caractère à une certaine puissance r fini pour les caractères peu de points fixes}}

The arguments above already enable proving Theorem~\ref{thm: character bound finite level general with M sigma} under the additional assumption “$f_1(\sigma)\geq 2r$”, but we keep it as an intermediate bound in terms of the number of ribbon tableaux for the moment.

\begin{lemma}\label{lem: character bound finite level general with M sigma with extra 2r assumption for fixed points}
    Let $r\geq 0$. As $n\to \infty$, uniformly over all $\lambda \vdash n$ such that $\lambda_1 = n-r$ and every $\sigma \in \kS_n$ such that $f_1(\sigma) \geq 2r$, we have
    \begin{equation}
        |\RT(\lambda, \alpha)| \lesssim_r M(\sigma)^r,
    \end{equation}
where $\alpha$ is the cycle structure of $\sigma$ written in weakly increasing order.
\end{lemma}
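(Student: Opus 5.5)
The plan is to decompose $\RT(\lambda,\alpha)$ according to how many boxes of $\lambda^*$ are \emph{not} covered by $1$-ribbons, and then apply Lemma~\ref{lem: technical lemma bound on S prime} to each piece. The case $r=0$ is trivial: $\lambda=(n)$ and $|\RT(\lambda,\alpha)|=1$. So assume $r\geq 1$. Since the statement is asymptotic, we may also take $n\geq 3r$.

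Because $\alpha$ is written in weakly increasing order, the $f=f_1(\sigma)\geq 2r$ ribbons of length $1$ are placed first, and they fill a sub-diagram $\mu\vdash f$ of $\lambda$. Let $K=|\lambda^*\backslash\mu^*|$ be the number of boxes of $\lambda^*$ not covered by $1$-ribbons. Then $0\leq K\leq r$, and every $T\in\RT(\lambda,\alpha)$ lies in $S'(\lambda,\alpha,K)$ for exactly one such $K$.

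The one point that needs an argument is that $K=1$ cannot occur; this is why Lemma~\ref{lem: technical lemma bound on S prime} only covers $K\in\{0,2,3,\ldots\}$. As in the proof of Lemma~\ref{lem: technical lemma bound on S}, the condition $f\geq 2r\geq r+\lambda_2$ gives $\mu_1\geq\lambda_2$. Hence the first-row part of $\lambda\backslash\mu$ is disconnected from $\lambda^*\backslash\mu^*$. Since a ribbon is connected, each ribbon of length at least $2$ lies entirely in one of these two parts. It follows that $\lambda^*\backslash\mu^*$ is tiled by ribbons of length at least $2$, so $K$ is a sum of integers at least $2$, and therefore $K\neq 1$. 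Consequently
\[
\RT(\lambda,\alpha)=\bigsqcup_{K\in\{0,2,\ldots,r\}} S'(\lambda,\alpha,K).
\]
The same decomposition was already used in the proof of Theorem~\ref{thm: character estimate many fixed points with error term}.

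Now bound each piece with Lemma~\ref{lem: technical lemma bound on S prime}. For $K\geq 2$ it gives $|S'(\lambda,\alpha,K)|\lesssim_r f^{r-K}G(\sigma)^K\leq M(\sigma)^r$. For $K=0$ it gives $|S'(\lambda,\alpha,0)|=d_{(\lambda_1-k,\lambda_2,\ldots)}\lesssim_r f^r\leq M(\sigma)^r$, where the middle bound comes from Lemma~\ref{lem: asymptotic hook length formula}. Summing these $O_r(1)$ bounds gives $|\RT(\lambda,\alpha)|\lesssim_r M(\sigma)^r$.

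I do not expect a real obstacle, since the counting is contained in Lemma~\ref{lem: technical lemma bound on S}. The step that needs care is the disjoint decomposition, in particular excluding $K=1$, and that step uses the hypothesis $f_1(\sigma)\geq 2r$ exactly.
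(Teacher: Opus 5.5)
Your proposal is correct and follows the paper's proof: the paper also writes $\RT(\lambda,\alpha)$ as the union of the sets $S'(\lambda,\alpha,K)$ for $K\in\{0,2,\ldots,r\}$ and applies Lemma~\ref{lem: technical lemma bound on S prime} to each piece. Your disconnection argument ($\mu_1\geq\lambda_2$) ruling out $K=1$, and your separate treatment of $r=0$, spell out details the paper leaves implicit.
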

\begin{proof}
The result follows from Lemma~\ref{lem: technical lemma bound on S prime}, since  $\RT(\lambda, \alpha) = \cup_{K \in \ag 0, 2,3,\ldots, r\ad} S'(\lambda, \alpha, K)$.
\end{proof}
We now see how to extend this to all permutations $\sigma$, including those such that “$f_1(\sigma)< 2r$”. Let us add some notation for this section.

Let $n \geq 1$. Let $\alpha = (\alpha_1, \ldots, \alpha_p)$ such that $1\leq \alpha_1 \leq \ldots \leq \alpha_p$ and $\sum_{i=1}^p \alpha_i = n$. Call such an $\alpha$ a \textit{reversed partition} of $n$. Denote the set of all reversed partitions of $n$ by $\mathfrak{R}_n$, and write alternatively $\alpha \vdash_{\textup{rev}} n$ if $\alpha \in \mathfrak{R}_n$.

For us, the reversed partitions $\alpha$ correspond to the cycle structure of a permutation $\sigma$ written in weakly increasing order. Since $M(\sigma)$ depends only on the cycle type of $\sigma$, we may also write $M(\alpha)$ instead. In this section it will be convenient to represent a reversed partition $\alpha$ by $\alpha=1^{f_1}2^{f_2}\ldots$, where $f_j = f_j(\alpha)$ is the number of $\alpha_i$'s equal to $j$. For example we may write $(1,1,3,3,3,3,3, 7,7,7) =  1^2 3^5 7^3 \in \mathfrak{R}_{38}$.

For $\alpha = 1^{f_1}2^{f_2}\ldots \in \mathfrak{R}_n\backslash \ag (1, \ldots, 1) \ad$, denote also $I(\alpha) = \min \ag i\geq 2 \mid f_i \geq 1\ad$.

We now show that the result of Lemma~\ref{lem: character bound finite level general with M sigma with extra 2r assumption for fixed points} holds for reversed partitions such that “$I(\alpha)>r$”.

\begin{lemma}\label{lem: character bound finite level general with M sigma with extra assumption more than r for I alpha}
    Let $r\geq 0$. As $n\to \infty$, uniformly over all $\lambda \vdash n$ such that $\lambda_1 = n-r$ and every $\sigma \in \kS_n$ such that $I(\alpha) >r$, we have
    \begin{equation}
        |\RT(\lambda, \alpha)| \lesssim_r M(\sigma)^r,
    \end{equation}
where $\alpha$ is the cycle structure of $\sigma$ written in weakly increasing order.
\end{lemma}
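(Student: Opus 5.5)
The plan is to exploit that, when $I(\alpha)>r$, every ribbon of length at least $2$ is longer than $r\geq |\lambda^*|$. Such a ribbon cannot lie entirely inside $\lambda^*$. We order the ribbons following $\alpha$ written increasingly: all $f_1$ one-ribbons are placed first, then the longer ones. So a ribbon tableau $T\in\RT(\lambda,\alpha)$ is determined by two pieces of data. The first is a standard filling of a subdiagram $\mu\vdash f_1$ of $\lambda$ by the 1-ribbons. The second is a ribbon tableau of the skew shape $\lambda\backslash\mu$ with weight $(\alpha_{f_1+1},\ldots,\alpha_p)$, all parts of which are $\geq I(\alpha)>r$.

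The key step is to bound the number of ribbon tableaux of $\lambda\backslash\mu$ with all ribbons of length $>r$ by $O_r(1)$. Each such ribbon is removed from the outer boundary. The remaining diagram after each removal still has at most $r$ boxes outside its first row. At any stage, a removable ribbon of prescribed length $m>r$ is determined by its lowest-leftmost cell (equivalently its endpoint on the boundary). Since the diagram differs from a one-row shape only in $\leq r$ boxes, there are at most $r+1$ possible positions for such a ribbon. Most of them are forbidden because the ribbon must end in the first row or wrap down the rim.

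The number of ribbons of length $\geq 2$ that touch $\lambda^*$ is at most $r$, because each one covers at least one box of $\lambda^*$ (otherwise it lies in the first row). Any ribbons beyond those lie flat in the first row. Their positions are forced by the order of removal, exactly as in the $K=0$ case of Lemma~\ref{lem: technical lemma bound on S prime}. Hence there are $O_r(1)$ choices in total for the long ribbons: at most $(r+1)^r$ for the non-flat ones, and one choice for the flat ones.

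It remains to bound $\sum_\mu d_\mu$ over the $O_r(1)$ admissible $\mu\vdash f_1$ with $\mu\subset\lambda$. Each such $\mu$ has level $r(\mu)\leq r$, and Lemma~\ref{lem: asymptotic hook length formula} gives $d_\mu\lesssim_r f_1^{r(\mu)}\leq \max(f_1,1)^{r}\leq M(\sigma)^r$. A small case to check is $f_1$ tiny, with $\mu$ arbitrary of size $<2r$, where $d_\mu=O_r(1)$. We also need $M(\sigma)\geq 1$, which is clear as soon as some $f_i\geq1$. Combining the two bounds gives $|\RT(\lambda,\alpha)|\lesssim_r M(\sigma)^r$.

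The main obstacle is the $O_r(1)$ count for long-ribbon tilings of $\lambda\backslash\mu$. It must be uniform in the number $p-f_1$ of long ribbons, which can be huge, for instance $n/m$ cycles of length $m$. This uniformity is why the argument has to show that all but boundedly many long ribbons are flat and forced. I would prove it by induction on removals: once $\lambda^*$ is fully removed, the remaining shape is a single row, and the rest is forced.
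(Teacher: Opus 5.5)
Your overall structure is the paper's. You split a tableau into a standard filling of some $\mu\vdash f_1$, $\mu\subset\lambda$, by the 1-ribbons, then bound the number of placements of the long ribbons on $\lambda\backslash\mu$, then conclude with $\sum_\mu d_\mu\lesssim_r f_1^r\le M(\sigma)^r$. That last part is fine. The gap is in the middle step.

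You bound the shapes of the non-flat long ribbons by $(r+1)^r$ and declare the flat ones forced. But you never control \emph{which} of the $J=p-f_1$ long ribbons are the non-flat ones. With ``at most $r+1$ options per step'' and ``at most $r$ non-flat steps'', the honest count is of order $\binom{J}{\le r}(r+1)^r$, which is not $O_r(1)$.

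Concretely, take $r=2$, $\lambda=(n-2,1,1)$ and $\alpha=(3^{n/3})$. Your counting does not rule out the vertical $3$-ribbon being any one of the $n/3$ ribbons. It therefore only yields a bound of order $n$, while the target is $M(\sigma)^2=(n/3)^{2/3}$.

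Your proposed repair, ``once $\lambda^*$ is fully removed, the rest is forced'', only handles ribbons peeled after the last non-flat one. In this example the non-flat ribbon is the last one peeled. So all $n/3-1$ flat ribbons are peeled before it and remain uncontrolled.

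The missing observation is that your bound of $r+1$ positions per step is far from sharp: there is at most one. Let $\nu\subset\nu'\subset\lambda$ with $\nu'\backslash\nu$ a ribbon of length $m>r$.
\begin{itemize}
\item Since $|(\nu')^*|\le r<m$, the ribbon meets the first row, so it contains the cell $(1,\nu'_1)$.
\item A ribbon is a connected subset of the rim of $\nu'$, and that rim is a path starting at $(1,\nu'_1)$.
\item Hence the ribbon is the initial segment of length $m$ of this path, and is determined by $\nu'$ and $m$ alone.
\end{itemize}
Peeling the long ribbons off $\lambda$ in the order $\alpha_p,\alpha_{p-1},\ldots$, every step is therefore forced. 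So for each $\mu$ there is at most one placement, uniformly in $J$. This is exactly the paper's assertion that ``after placing the ribbons of size 1, there is at most one possibility to place the other ribbons''. With this observation replacing your $(r+1)^r$ count, your argument goes through and coincides with the paper's proof.
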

\begin{proof}
    After placing the ribbons of size 1, there is at most one possibility to place the other ribbons as illustrated in Figure~\ref{fig: ribbon tableaux for large I}.
\begin{figure}[!ht]
    \centering
{
\ytableausetup{boxsize=1em}

\begin{ytableau}
    \none &*(yellow!60)6 \\
    \none &*(yellow!60)6&*(yellow!60)6 \\
    \none &*(gray!60)5&*(yellow!60)6&*(yellow!60)6&*(yellow!60)6&*(yellow!60)6 \\
    \none &*(gray!60)1&*(gray!60)2&*(gray!60)3&*(gray!60)4&*(yellow!60)6&*(yellow!60)6&*(yellow!60)6&*(orange!60)7&*(orange!60)7&*(orange!60)7&*(orange!60)7&*(orange!60)7&*(orange!60)7&*(orange!60)7&*(orange!60)7&*(orange!60)7&*(orange!60)7&*(orange!60)7\\
    \none 
\end{ytableau}
\begin{ytableau}
    \none &*(yellow!60)6 \\
    \none &*(yellow!60)6&*(yellow!60)6 \\
    \none &*(gray!60)4&*(yellow!60)6&*(yellow!60)6&*(yellow!60)6&*(yellow!60)6 \\
    \none &*(gray!60)1&*(gray!60)2&*(gray!60)3&*(gray!60)5&*(yellow!60)6&*(yellow!60)6&*(yellow!60)6&*(orange!60)7&*(orange!60)7&*(orange!60)7&*(orange!60)7&*(orange!60)7&*(orange!60)7&*(orange!60)7&*(orange!60)7&*(orange!60)7&*(orange!60)7&*(orange!60)7\\
    \none 
\end{ytableau}

\begin{ytableau}
    \none &*(yellow!60)6 \\
    \none &*(yellow!60)6&*(yellow!60)6 \\
    \none &*(gray!60)3&*(yellow!60)6&*(yellow!60)6&*(yellow!60)6&*(yellow!60)6 \\
    \none &*(gray!60)1&*(gray!60)2&*(gray!60)4&*(gray!60)5&*(yellow!60)6&*(yellow!60)6&*(yellow!60)6&*(orange!60)7&*(orange!60)7&*(orange!60)7&*(orange!60)7&*(orange!60)7&*(orange!60)7&*(orange!60)7&*(orange!60)7&*(orange!60)7&*(orange!60)7&*(orange!60)7\\
    \none 
\end{ytableau}
\begin{ytableau}
    \none &*(yellow!60)6 \\
    \none &*(yellow!60)6&*(yellow!60)6 \\
    \none &*(gray!60)2&*(yellow!60)6&*(yellow!60)6&*(yellow!60)6&*(yellow!60)6 \\
    \none &*(gray!60)1&*(gray!60)3&*(gray!60)4&*(gray!60)5&*(yellow!60)6&*(yellow!60)6&*(yellow!60)6&*(orange!60)7&*(orange!60)7&*(orange!60)7&*(orange!60)7&*(orange!60)7&*(orange!60)7&*(orange!60)7&*(orange!60)7&*(orange!60)7&*(orange!60)7&*(orange!60)7\\
    \none 
\end{ytableau}

\begin{ytableau}
    \none 
\end{ytableau}
}
\caption{All possible ribbon coverings of the diagram $ (18,5,2,1)\vdash 26$, for $\alpha = (1,1,1,1,1,10,11)$, and for which the ribbons of size 1 cover the subdiagram $\mu = (4,1)$.
Colors are redundant.}
    \label{fig: ribbon tableaux for large I}
\end{figure}      
    Therefore
    \begin{equation}
        |\RT(\lambda, \alpha)| \leq \sum_{\mu} d_\mu,
    \end{equation}
where the sum is over all diagrams $\mu \vdash f_1(\sigma)$ such that $\mu \subset \lambda$. Since the sum has $O_r(1)$ terms, and each $\mu$ in the sum has at most $r$ boxes over the first row, we deduce that $|\RT(\lambda, \alpha)| \lesssim_r f_1(\sigma)^r \leq M(\sigma)^r$.
\end{proof}

We now want to generalize the result to every permutation. Our strategy is the following. We will show how to explicitly construct a \textit{thinner} version $\beta$ of $\alpha$ which satisfies either $f_1(\beta)\geq 2r$ or $I(\beta)>r$; and such that $|\RT(\lambda, \alpha)|\leq |\RT(\lambda, \beta)|$ but also such that $M(\beta) \lesssim_r M(\alpha)$. This will allow us to conclude.

The idea is that \textit{fragmenting} ribbons into smaller ribbons increases the number of ribbon tableaux. Note that this works if we \textit{fully} fragment a ribbon into ribbons of length 1, but this \textit{does not} work in general (e.g.\ if a ribbon of size 7 is fragmented into a ribbon of size 3 and one of size 4). For example $|\RT((3,1), (4))| = 1$ but $|\RT((3,1), (1,3))| = 0$, and $|\RT((3,2,1), (1,5))| = 1$ but $|\RT((3,2,1), (1,1,4))| = 0$.

\begin{lemma}[Full fragmentation of the smallest ribbon of size more than 1]\label{lem: Full fragmentation of the smallest ribbon of size more than 1}
    Let $n\geq 1$, $\lambda \vdash n$ and $\alpha = 1^{a_1}2^{a_2} \ldots \vdash_{\textup{rev}} n$. Denote $I = I(\alpha)$. Let $b_1 = a_1 + I$, $b_{I} = a_{I} - 1$, and $b_i = a_i$ if $i\notin \ag 1, I\ad$. Then $\beta := 1^{b_1}2^{b_2} \ldots \vdash_{\textup{rev}} n$ satisfies $|\RT(\lambda, \alpha)|\leq |\RT(\lambda, \beta)|$ and $M(\beta) \lesssim_I M(\alpha)$.
\end{lemma}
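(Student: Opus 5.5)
We prove the two claims separately: an injection of ribbon tableaux for the first, and a direct comparison of cycle counts for the second.

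\textbf{Injection.} Since the number of ribbon tableaux of weight $\alpha$ does not depend on the order of the parts, we reorder $\alpha$ so that the $I$-ribbon being fragmented comes first, i.e.\ it is placed at the very beginning of the filling. Every ribbon of length $I$ containing the corner box $(1,1)$ and forming a valid first shape is a hook of size $I$. Hooks, however, are not the only possibility for the first ribbon, since the first ribbon must itself be a Young diagram.

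So we take a different route and use the "last ribbon" order. In the Murnaghan--Nakayama recursion the parts of $\alpha$ may be removed in any order. We choose the $I$-ribbon to be the \emph{last} one removed, i.e.\ the first one placed. Then the region it covers, say $\nu$, is a Young diagram of size $I$ which is a ribbon, hence a hook.

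Given such a tableau $T$, we replace the $I$-ribbon by $I$ ribbons of size $1$ filling the hook $\nu$ in a fixed standard way (say row first, then column). This gives a ribbon tableau of weight $\beta$, suitably ordered: $I$ ones first, followed by the remaining parts in the same order. The remaining ribbons are untouched, so the map $T\mapsto T'$ is well defined and injective, since $T$ is recovered by merging the first $I$ unit ribbons back. Reordering the weights back to weakly increasing order preserves cardinalities, which gives $|\RT(\lambda,\alpha)|\le|\RT(\lambda,\beta)|$.

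The main point to justify carefully is the order-independence of the count $|\RT(\lambda,\alpha)|$ with respect to permutations of $\alpha$. The character value is order-independent, but the \emph{number} of tableaux need not be a priori. I would avoid relying on it: I would check whether the paper's convention (increasing order, so the $1$'s come first and then the $I$'s) already makes the first $I$-ribbon adjacent to the $1$-region $\mu\vdash a_1$. In that case the injection works directly with $\beta$ in increasing order. The first $I$-ribbon $\rho$ is placed on $\mu$, and $\mu\cup\rho$ is a Young diagram. Refilling $\rho$ with unit ribbons in a standard order extends the standard tableau of $\mu$ to one of $\mu\cup\rho$; this is always possible, since a skew shape $\mu\cup\rho\backslash\mu$ admits a standard filling. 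Injectivity follows because $\rho$ is determined as the set of boxes labelled $a_1+1,\dots,a_1+I$. This is cleaner, and it is the version I would write.

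\textbf{Bound on $M$.} For $i\ge2$ we have $f_i(\beta)\le f_i(\alpha)$, so the corresponding terms are at most $M(\alpha)$. For $i=1$,
\[
f_1(\beta)=a_1+I\le M(\alpha)+I\,a_I^{1/I}\cdot a_I^{1-1/I}.
\]
This is not obviously $\lesssim M(\alpha)$, so instead we argue directly. Since $a_I\ge1$, we have $M(\alpha)\ge a_I^{1/I}\ge1$, hence $a_1+I\le M(\alpha)+I\le (1+I)M(\alpha)$. Therefore $M(\beta)\le (1+I)M(\alpha)$, as required.
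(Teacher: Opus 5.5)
Your final version is correct and is essentially the paper's proof. You keep the weakly increasing order and replace the first $I$-ribbon $\rho$, which sits directly on the region $\mu$ of the $a_1$ unit ribbons with $\mu\cup\rho$ a Young diagram, by a fixed standard filling with labels $a_1+1,\dots,a_1+I$ (shifting later labels), and you bound $M(\beta)\le\max(a_1+I,M(\alpha))\le(1+I)M(\alpha)$ using $M(\alpha)\ge a_I^{1/I}\ge 1$. You were right to abandon the reordering route, since $|\RT(\lambda,\alpha)|$ genuinely depends on the order of the parts (e.g.\ $|\RT((2,1),(1,2))|=0$ while $|\RT((2,1),(2,1))|=2$); in a write-up, delete that paragraph together with the confused hook remark and the unused inequality for $f_1(\beta)$.
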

\begin{proof}
Let $T \in \RT(\lambda, \alpha)$. Denote its ribbons by $\rho_1, \rho_2, \ldots$. For each $i\geq 1$, the $i$-th ribbon is filled with the number $i$, and by definition $\rho_{a_1 + 1}$ is the first ribbon of size $I$. Define a new ribbon tableau $T^*$ as follows:
\begin{itemize}
    \item Start from $T$.
    \item For $i\leq a_1$, let $\rho_i' = \rho_i$. $\rho_i'$ is still filled with the number $i$.
    \item Consider the standard tableau $T'$ of $\rho_{a_1+1}$, filled with the numbers from $a_1 + 1$ to $a_1 + I$, where we first fill the first row from left to right, then the second, and so on. For $i\in \ag a_1 + 1, \ldots, a_1 + I\ad$, let $\rho_i'$ be the ribbon of size 1 filled by the number $i$ in $T'$.
    \item For $i\geq a_1+I+ 1$, let $\rho_{i}'$ be the ribbon $\rho_{i-I+ 1}$, where the content of cells is replaced by $i$ (it was previously $i - I + 1$).
\end{itemize}
We illustrate this process in Figure~\ref{fig: fragmentation process}.
\begin{figure}[!ht]
    \centering
{
\ytableausetup{boxsize=1.32em}

\begin{ytableau}
    \none &*(yellow!60)10 \\
    \none &*(yellow!60)10 &*(yellow!60)10&*(orange!60)11&*(orange!60)11&*(orange!60)11 \\
    \none &*(gray!60)7 &*(yellow!60)10&*(yellow!60)10&*(yellow!60)10&*(orange!60)11&*(red!60)12&*(red!60)12 \\
    \none &*(gray!60)3  &*(gray!60)5&*(gray!60)8&*(yellow!60)10&*(orange!60)11&*(orange!60)11&*(red!60)12 \\
    \none &*(gray!60)1&*(gray!60)2&*(gray!60)4&*(gray!60)6&*(gray!60)9&*(orange!60)11&*(red!60)12&*(red!60)12&*(red!60)12&*(red!60)12&*(red!60)12&*(red!60)12&*(red!60)12&*(red!60)12\\
    \none 
\end{ytableau}
\begin{ytableau}
    \none &*(yellow!60)16 \\
    \none &*(yellow!60)14 &*(yellow!60)15&*(orange!60)17&*(orange!60)17&*(orange!60)17 \\
    \none &*(gray!60)7 &*(yellow!60)11&*(yellow!60)12&*(yellow!60)13&*(orange!60)17&*(red!60)18&*(red!60)18 \\
    \none &*(gray!60)3  &*(gray!60)5&*(gray!60)8&*(yellow!60)10&*(orange!60)17&*(orange!60)17&*(red!60)18 \\
    \none &*(gray!60)1&*(gray!60)2&*(gray!60)4&*(gray!60)6&*(gray!60)9&*(orange!60)17&*(red!60)18&*(red!60)18&*(red!60)18&*(red!60)18&*(red!60)18&*(red!60)18&*(red!60)18&*(red!60)18\\
    \none 
\end{ytableau}

\begin{ytableau}
    \none 
\end{ytableau}
}
\caption{Here $\lambda = (14,7,7,5,1)$, $\alpha = 1^9 7^2 11^1$, and $\beta = 1^{16} 7^1 11^1$. On the left is a ribbon tableau $T\in \RT(\lambda, \alpha)$, and on the right is the ribbon tableau $T^*\in \RT(\lambda, \beta)$ obtained by the fragmentation process.
Colors are redundant.}
    \label{fig: fragmentation process}
\end{figure}   
By construction, $T^* \in \RT(\lambda, \beta)$, and the map $T\mapsto T^*$ is injective. It follows that $|\RT(\lambda, \alpha)|\leq |\RT(\lambda, \beta)|$.

Finally, using that $b_i\leq a_i$ for $i\geq 2$ and that $b_1 = a_1 + I$, we obtain
\begin{equation}
    M(\beta) = \max_{i\geq 1} b_i^{1/i} \leq \max(a_1 + I,\max_{i\geq 2}a_i^{1/i})) \leq I +  \max_{i\geq 1}a_i^{1/i} = I + M(\alpha).
\end{equation}
Since by definition we have $M(\alpha) \geq 1$, we conclude that $M(\beta) \lesssim_I M(\alpha)$.
\end{proof}
We now have all the pieces to prove Theorem~\ref{thm: character bound finite level general with M sigma}.

\begin{proof}[Proof of Theorem~\ref{thm: character bound finite level general with M sigma}]
 Let $\lambda \vdash n$ such that $\lambda_1 = n-r$ and let $\sigma \in \kS_n$. By definition we have $M(\sigma)^r = (n^r)^{J(\sigma)}$ and $J(\sigma) \lesssim 1$. Since $d_\lambda \asymp_r n^r$, we obtain that $M(\sigma)^r \asymp_r d_\lambda^{J(\sigma)}$, which proves that the two statements are equivalent.
 
Let us therefore only prove the first one, that is, that $  \bg \ch^\lambda(\sigma) \bd \lesssim_r M(\sigma)^r$.
Denote the cycle type of $\sigma$ written in weakly increasing order by $\alpha = 1^{a_1}2^{a_2}\ldots \vdash_{\textup{rev}} n$.
By the triangle inequality in the Murnaghan--Nakayama rule, we have $\bg \ch^\lambda(\sigma) \bd \leq |\RT(\lambda, \alpha)|$.

Let $m = \sum_{i= 2}^r a_i$. Define $\beta \vdash_{\textup{rev}} n$ from $\alpha$ by applying the fragmentation process iteratively $\min(m,r)$ times to $\alpha$. By Lemma~\ref{lem: Full fragmentation of the smallest ribbon of size more than 1} we have $|\RT(\lambda, \alpha)|\leq |\RT(\lambda, \beta)|$ and $M(\beta) \lesssim_r M(\alpha)$. (There, the implicit constant depends only on $r$ since we have applied the fragmentation process at most $r$ times to values of $I$ which are in $\ag 2, \ldots, r \ad$.)
By construction, if $m \geq r$ then $f_1(\beta) \geq 2r$; and if $m<r$ then $I(\beta) >r$. We deduce from Lemma~\ref{lem: character bound finite level general with M sigma with extra 2r assumption for fixed points} (if $m \geq r$) and Lemma~\ref{lem: character bound finite level general with M sigma with extra assumption more than r for I alpha} (if $m < r$) that $|\RT(\lambda, \beta)| \lesssim_r M(\beta)^r$. Combining everything, we conclude that
\begin{equation}
    \bg \ch^\lambda(\sigma) \bd \leq |\RT(\lambda, \alpha)| \leq |\RT(\lambda, \beta)| \lesssim_r M(\beta)^r \lesssim_r M(\alpha)^r = M(\sigma)^r. \qedhere
\end{equation}
\end{proof}

\begin{proof}[Proof of Theorem~\ref{thm: borne asymptotique r fini pour les caractères peu de points fixes}]
By assumption we have $f_1(\sigma_n) = o(n^{1/2})$ and $f_2(\sigma_n)^{1/2} = o(n^{1/2})$. Moreover for $i\geq 3$, we have $f_i(\sigma_n)^{1/i} \leq n^{1/i} \leq n^{1/3}$. It follows that $M(\sigma_n) = o(n^{1/2})$, and hence that $M(\sigma_n)^r = o(n^{r/2})$. The result then follows from Theorem~\ref{thm: character bound finite level general with M sigma}.
\end{proof}

\begin{proof}[Proof of Corollary~\ref{cor: equivalent asymptotique d lambda fois caractère à une certaine puissance r fini pour les caractères peu de points fixes}]
The result follows from Theorem~\ref{thm: borne asymptotique r fini pour les caractères peu de points fixes},  since if $\lambda \vdash n$ is such that $n-\lambda_1 = r$ we have $d_\lambda \leq n^r$.
\end{proof}

\section{Preliminaries for applications to cutoff profiles}

\subsection{Generalized approximation and comparison lemmas}
The approximation lemma \cite[Lemma~2.1]{Teyssier2020} enabled filtering \textit{low frequencies} to obtain the profile for transpositions, and the comparison lemma for cutoff profiles \cite[Lemma~1.4]{Nestoridi2024comparisonstar} enabled obtaining the profile of a (sequence of) irreducible Markov chain from another one, given that they have similar spectral properties.
In this paper we want to apply the comparison of profiles to chains that may be periodic. These variants also work for time-inhomogeneous chains, for example if we multiply by permutations uniform in a different conjugacy class at each step. This naturally arises in some models of random maps, which are coded by a product of a fixed point free permutation and another permutation, see for instance \cite{Gamburd2006, BudzinskiCurienPetri2019}.

We therefore have to prove a variant of these lemmas. Note however that this section does not contain any new argument, and is included for completeness.

\medskip

In what follows the set of all irreducible representations of a finite group $G$ is denoted by $\widehat{G}$.

\begin{lemma}\label{lem:approximation lemma a bit more general prelim}
    Let $G$ be a finite group. Let $H$ be a subset of $G$, $A\subset B \subset \widehat{G}$, and let $(u_{\alpha})_{\alpha\in \widehat{G}}$ be a family of complex numbers. Then
\begin{equation}\label{eq:approximation lemma}
    \bg \frac{1}{|G|}\sum_{g\in H} \bg \sum_{\alpha \in B} d_\alpha u_\alpha \overline{\ch^\alpha(g)} \bd - \frac{1}{|G|}\sum_{g\in H} \bg \sum_{\alpha \in A} d_\alpha u_\alpha \overline{\ch^\alpha(g)} \bd\bd \leq \sum_{\alpha \in {B\backslash A}} d_\alpha |u_\alpha|.
\end{equation}
\end{lemma}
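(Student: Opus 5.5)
The plan is to apply the reverse triangle inequality pointwise in $g$ and then sum. For each fixed $g\in H$, write
\begin{equation*}
X_g = \sum_{\alpha \in B} d_\alpha u_\alpha \overline{\ch^\alpha(g)}, \qquad Y_g = \sum_{\alpha \in A} d_\alpha u_\alpha \overline{\ch^\alpha(g)}.
\end{equation*}
Since $A\subset B$, the difference $X_g - Y_g$ is exactly the sum over $\alpha\in B\backslash A$. The reverse triangle inequality then gives
\begin{equation*}
\bg |X_g|-|Y_g| \bd \leq \sum_{\alpha\in B\backslash A} d_\alpha |u_\alpha| \, |\ch^\alpha(g)|.
\end{equation*}

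Next, we average over $g\in H$. The left-hand side of \eqref{eq:approximation lemma} equals $\frac{1}{|G|}\bg\sum_{g\in H}(|X_g|-|Y_g|)\bd$, which is at most $\frac{1}{|G|}\sum_{g\in H}\bg|X_g|-|Y_g|\bd$. Applying the pointwise bound to each term and interchanging the two finite sums, this is bounded by
\begin{equation*}
\sum_{\alpha\in B\backslash A} d_\alpha |u_\alpha| \cdot \frac{1}{|G|}\sum_{g\in H}|\ch^\alpha(g)|.
\end{equation*}

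It then remains to show that $\frac{1}{|G|}\sum_{g\in H}|\ch^\alpha(g)|\leq 1$ for every irreducible $\alpha$. Enlarging $H$ to $G$ only increases the sum, and Cauchy--Schwarz gives
\begin{equation*}
\frac{1}{|G|}\sum_{g\in G}|\ch^\alpha(g)| \leq \pg\frac{1}{|G|}\sum_{g\in G}|\ch^\alpha(g)|^2\pd^{1/2} = 1,
\end{equation*}
where the last equality is the first orthogonality relation for irreducible characters. This concludes the argument.

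There is no real obstacle here. The only point needing care is to use the $L^1$ bound $\frac{1}{|G|}\sum_{g}|\ch^\alpha(g)|\le 1$, obtained via Cauchy--Schwarz and orthogonality, rather than the crude bound $|\ch^\alpha(g)|\le d_\alpha$; the crude bound would give an extra factor $d_\alpha$ on the right-hand side.
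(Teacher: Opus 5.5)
Your proposal is correct and follows essentially the same route as the paper. You apply the (reverse) triangle inequality to reduce to $\sum_{\alpha\in B\backslash A} d_\alpha |u_\alpha| \cdot \frac{1}{|G|}\sum_{g\in H}|\ch^\alpha(g)|$, then bound the inner average by $1$ via enlarging $H$ to $G$, Cauchy--Schwarz, and orthonormality of irreducible characters.
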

\begin{proof}
By triangle inequalities, the left hand side of \eqref{eq:approximation lemma} is upper bounded by
\begin{equation}
\begin{split}
  \frac{1}{|G|}\sum_{g\in H} \sum_{\alpha \in B\backslash A} d_\alpha  |u_\alpha| \bg \ch^\alpha(g)\bd = \sum_{\alpha \in B\backslash A} d_\alpha  |u_\alpha| \pg\frac{1}{|G|}\sum_{g\in H} \bg \ch^\alpha(g)\bd \pd. 
\end{split}
\end{equation}
This concludes the proof, since by the Cauchy--Schwarz inequality and orthonormality of characters we have
\begin{equation}
    \frac{1}{|G|}\sum_{g\in H} \bg \ch^\alpha(g)\bd  \leq \frac{1}{|G|}\sum_{g\in G} \bg \ch^\alpha(g)\bd \leq  \frac{1}{|G|} \sqrt{|G| \sum_{g\in G} \bg \ch^\alpha(g)\bd^2} = \frac{1}{|G|} \sqrt{|G|\cdot |G|} = 1. \qedhere
\end{equation}
\end{proof}

Taking $A = \emptyset$ in Lemma~\ref{lem:approximation lemma a bit more general prelim}, we obtain the following.

\begin{lemma}\label{lem: borne DS version sans les carrés ni la racine}
     Let $G$ be a finite group. Let $H\subset G$, $ B \subset \widehat{G}$, and $(u_{\alpha})_{\alpha\in \widehat{G}}$ be a family of complex numbers.
     Then
     \begin{equation}
         \frac{1}{|G|}\sum_{g\in H} \bg \sum_{\alpha \in B} d_\alpha u_\alpha  \overline{\ch^\alpha(g)} \bd \leq \sum_{\alpha \in B} d_\alpha |u_\alpha|.
     \end{equation}
\end{lemma}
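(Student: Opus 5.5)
This is the special case $A=\emptyset$ of Lemma~\ref{lem:approximation lemma a bit more general prelim}, so I will argue exactly as the text above indicates. With $A=\emptyset$, the inner sum over $\alpha\in A$ is empty and equals $0$. Hence the second term on the left-hand side of \eqref{eq:approximation lemma} vanishes. The left-hand side of \eqref{eq:approximation lemma} then reduces to
\begin{equation*}
\bg \frac{1}{|G|}\sum_{g\in H}\bg \sum_{\alpha\in B} d_\alpha u_\alpha \overline{\ch^\alpha(g)}\bd \bd .
\end{equation*}
Since this quantity is a nonnegative real number, the outer absolute value is harmless. The right-hand side of \eqref{eq:approximation lemma} becomes $\sum_{\alpha\in B} d_\alpha|u_\alpha|$, because $B\backslash\emptyset = B$. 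This is exactly the claimed inequality.

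Alternatively, I could give the argument directly, which makes the statement self-contained. The first step is the triangle inequality applied to the inner sum. Exchanging the two finite sums then bounds the left-hand side by
\begin{equation*}
\sum_{\alpha\in B} d_\alpha|u_\alpha| \cdot \frac{1}{|G|}\sum_{g\in H}\bg \ch^\alpha(g)\bd .
\end{equation*}
The second step is to show that each average $\frac{1}{|G|}\sum_{g\in H}|\ch^\alpha(g)|$ is at most $1$. I would first enlarge $H$ to $G$. Then Cauchy--Schwarz together with the orthonormality relation $\frac{1}{|G|}\sum_{g\in G}|\ch^\alpha(g)|^2=1$ gives the bound.

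There is no real obstacle here. The only point needing care is the hypothesis $A\subset B$ in Lemma~\ref{lem:approximation lemma a bit more general prelim}, which holds trivially for $A=\emptyset$.
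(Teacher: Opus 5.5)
Your proposal is correct and matches the paper, which obtains this lemma exactly by taking $A=\emptyset$ in Lemma~\ref{lem:approximation lemma a bit more general prelim}. Your direct alternative is also the same triangle-inequality, Cauchy--Schwarz and orthonormality argument used to prove that lemma.
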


Now let us compare these quantities for different families $(u_\alpha)_{\alpha\in \widehat{G}}$.

\begin{lemma}\label{lem: comparaison familles u v}
     Let $G$ be a finite group. Let $H\subset G$, $ B \subset \widehat{G}$, and $(u_{\alpha})_{\alpha\in \widehat{G}}, (v_{\alpha})_{\alpha\in \widehat{G}}$ be two families of complex numbers.
     Then
     \begin{equation}\label{eq: comparaison familles u v}
         \frac{1}{|G|}\sum_{g\in H} \bg \bg \sum_{\alpha \in B} d_\alpha u_\alpha  \overline{\ch^\alpha(g)} \bd -\bg \sum_{\alpha \in B} d_\alpha v_\alpha  \overline{\ch^\alpha(g)} \bd\bd\leq \sum_{\alpha \in B} d_\alpha |u_\alpha-v_\alpha|.
     \end{equation}
\end{lemma}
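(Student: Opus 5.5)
The plan is to reduce Lemma~\ref{lem: comparaison familles u v} to Lemma~\ref{lem: borne DS version sans les carrés ni la racine}, applied to the family of differences. First I would use the reverse triangle inequality pointwise in $g$: for every $g\in H$,
\begin{equation*}
\bg \bg \sum_{\alpha \in B} d_\alpha u_\alpha \overline{\ch^\alpha(g)} \bd - \bg \sum_{\alpha \in B} d_\alpha v_\alpha \overline{\ch^\alpha(g)} \bd \bd \leq \bg \sum_{\alpha \in B} d_\alpha (u_\alpha - v_\alpha) \overline{\ch^\alpha(g)} \bd .
\end{equation*}
Averaging this over $g\in H$ with the weight $\frac{1}{|G|}$ then bounds the left-hand side of \eqref{eq: comparaison familles u v} by
\begin{equation*}
\frac{1}{|G|}\sum_{g\in H} \bg \sum_{\alpha \in B} d_\alpha w_\alpha \overline{\ch^\alpha(g)} \bd, \qquad \text{where } w_\alpha := u_\alpha - v_\alpha .
\end{equation*}

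Next I would apply Lemma~\ref{lem: borne DS version sans les carrés ni la racine} to the family $(w_\alpha)_{\alpha\in\widehat{G}}$, keeping the same $H$ and $B$. That lemma bounds the quantity above by $\sum_{\alpha\in B} d_\alpha |w_\alpha| = \sum_{\alpha \in B} d_\alpha |u_\alpha - v_\alpha|$, which is exactly the claimed bound. If one prefers not to quote the lemma, the argument is the same direct one: use the triangle inequality inside the sum, then bound $\frac{1}{|G|}\sum_{g\in H}|\ch^\alpha(g)|\leq 1$ by Cauchy--Schwarz together with the orthonormality of characters, as in the proof of Lemma~\ref{lem:approximation lemma a bit more general prelim}.

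I do not expect any real obstacle here. The only point to check is that the reverse triangle inequality is applied to complex moduli pointwise before summing over $g$, which is legitimate. Moving the absolute value outside the sum over $g$ is also fine, since the left-hand side of \eqref{eq: comparaison familles u v} already has the absolute value inside the sum, and each term is bounded separately.
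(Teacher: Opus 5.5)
Your proposal is correct and matches the paper's proof. The paper also applies the (reverse) triangle inequality pointwise in $g$ to reduce to the family $(u_\alpha - v_\alpha)_{\alpha\in\widehat{G}}$, and then invokes the preceding lemma, which is the case $A=\emptyset$ of the approximation lemma.
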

\begin{proof}
By the triangle inequality, the left hand side of \eqref{eq: comparaison familles u v} is upper bounded by
\begin{equation}
\begin{split}
       \frac{1}{|G|}\sum_{g\in H} \bg \sum_{\alpha \in B} d_\alpha u_\alpha  \overline{\ch^\alpha(g)} -\sum_{\alpha \in B} d_\alpha v_\alpha  \overline{\ch^\alpha(g)} \bd  =  \frac{1}{|G|}\sum_{g\in H} \bg \sum_{\alpha \in B} d_\alpha (u_\alpha - v_\alpha)  \overline{\ch^\alpha(g)}\bd 
\end{split}
\end{equation}
Applying Lemma~\ref{lem: borne DS version sans les carrés ni la racine} to the family $(u_\alpha - v_\alpha)_{\alpha \in \widehat{G}}$ concludes the proof.
\end{proof}

\subsection{Random transpositions in coset distance}

In the random transposition (RT) model on $\kS_n$ introduced by Diaconis and Shahshahani \cite{DiaconisShahshahani1981}, the two hands pick cards independently, so the probability that they are the same is $1/n$, and the model has inherent laziness $1/n$, which makes the walk aperiodic. This walk is driven by the measure $\frac{1}{n}\delta_{\Id} + \pg 1- \frac{1}{n} \pd \Unif_{\cT}$, where we recall that $\cT = \cT^{(n)} \subset \kS_n$ is the conjugacy class of transpositions.

The pure transposition walk (pRT) is the walk driven by  $\mu_{\pRT} = \Unif_{\cT}$. This corresponds to taking at each step two distinct cards and swapping them.

\medskip

The eigenvalues of pRT are exactly the characters $\chi^\lambda(\cT)$ (for $\lambda \vdash n$), and therefore have good symmetry properties: we have $\chi^{\lambda'}(\cT) = - \chi^\lambda(\cT)$, where $\lambda'$ is the conjugate diagram of~$\lambda$.
On the other hand, the eigenvalues of RT are $\frac{1}{n} + \pg 1- \frac{1}{n} \pd \chi^\lambda(\tau)$, and laziness then removes some symmetry for the eigenvalues. While it affects the eigenvalues that are close to 1 (that is, for $\lambda$ such that $\lambda_1 = n-O(1)$) only by $O(1/n^2)$, which is negligible, it affects the eigenvalues close to $-1$ (that is, for $\lambda$ such that $\lambda_1' = n-O(1)$) by $2/n + O(1/n^2)$, which makes all these representations asymptotically negligible close to the mixing time in the distance to stationarity.

\medskip

The two models, however, have the same asymptotic behaviour. The number of fixed points of a permutation taken uniformly in either $\kS_n$, or $\kA_n$, or $\kS_n \backslash\kA_n$, is asymptotically distributed as $\Poiss(1)$. At a computational level, if $\lambda$ is such that $\lambda_1 =n-O(1)$, the contribution of $\lambda$ for RT is (asymptotically) the same as the contribution of $\lambda$ and $\lambda'$ combined for pRT, and $\lambda'$ does not contribute to RT. All other estimates are the same. Making these minor adaptations in the proof of \cite{Teyssier2020} immediately gives the following for pRT.

\begin{theorem}\label{thm: profile pure transpositions}
     Let $a\in \bbR$. Let $(t_{n})$ be a sequence of integers such that $t_{n} = \frac{1}{2}n (\ln n + a + o(1))$. Then as $n \to \infty$, we have
\begin{equation}
       \dtv(\Unif_{\cT^{(n)}}^{*t_n}, \Unif_{\kE({\cT^{(n)}},t_n)})  \to \dtv\pg \Poiss\pg 1+e^{-a}\pd,  \Poiss(1) \pd.
    \end{equation}
\end{theorem}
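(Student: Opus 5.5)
We follow the scheme of \cite{Teyssier2020}, adapted to the pure walk $\Unif_{\cT}$ and its parity cosets. Write $\kE = \kE(\cT, t_n)$. This is $\kA_n$ if $t_n$ is even and $\kS_n\backslash\kA_n$ if $t_n$ is odd, and $\mu^{*t_n}$ is supported on $\kE$. Fourier inversion on $\kS_n$ gives
\begin{equation*}
\mu^{*t_n}(g) - \Unif_{\kE}(g) = \frac{1}{n!}\sum_{\lambda \vdash n,\ \lambda\neq (n),(1^n)} d_\lambda \chi^\lambda(\cT)^{t_n}\overline{\ch^\lambda(g)} \qquad \text{for } g\in \kE.
\end{equation*}
The two one-dimensional representations $(n)$ and $(1^n)$ produce exactly the density of $\Unif_{\kE}$: on $\kE$ we have $\sgn(g) = (-1)^{t_n}$, so their terms combine to $2/n!$. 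Hence $2\dtv = \sum_{g\in\kE}\bigl|\cdots\bigr|$ is of the form treated in Lemma~\ref{lem:approximation lemma a bit more general prelim} with $H = \kE$ and $u_\lambda = \chi^\lambda(\cT)^{t_n}$.

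\textbf{Step 1 (truncation).} Fix a large $R$ and let $A_R$ be the set of $\lambda$ with $\lambda_1\ge n-R$ or $\lambda_1'\ge n-R$, excluding the two trivial ones. By Lemma~\ref{lem:approximation lemma a bit more general prelim}, the error from discarding the remaining diagrams is at most $\sum_{\lambda\notin A_R} d_\lambda|\chi^\lambda(\cT)|^{t_n}$. The Diaconis--Shahshahani analysis bounds this sum by $\varepsilon(R)+o(1)$ uniformly at time $\frac12 n(\ln n + a)$. This is the same bound used in \cite{Teyssier2020}, applied to the non-lazy eigenvalues, and the symmetry $\chi^{\lambda'}(\cT) = -\chi^\lambda(\cT)$ controls both halves of the spectrum.

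\textbf{Step 2 (folding the symmetric pairs).} For $g\in\kE$ we have $\ch^{\lambda'}(g) = \sgn(g)\ch^\lambda(g) = (-1)^{t_n}\ch^\lambda(g)$. Combined with $\chi^{\lambda'}(\cT)^{t_n} = (-1)^{t_n}\chi^\lambda(\cT)^{t_n}$, this shows that the term of $\lambda'$ equals the term of $\lambda$ on $\kE$. Thus the truncated sum restricted to $\kE$ equals $2\cdot\frac{1}{n!}\sum_{\lambda: 1\le r(\lambda)\le R} d_\lambda \chi^\lambda(\cT)^{t_n}\ch^\lambda(g)$. The factor $2$ is then exactly compensated by $|\kE| = n!/2$. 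Concretely, the resulting quantity is $\frac{1}{|\kE|}\sum_{g\in\kE}\bigl|\sum_{1\le r(\lambda)\le R}\cdots\bigr|$, which is the average over a uniform element of $\kE$.

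\textbf{Step 3 (low frequencies).} Replace $d_\lambda\chi^\lambda(\cT)^{t_n}$ by its limit $\frac{e^{-ra}}{r!}d_{\lambda^*}$, using Corollary~\ref{cor: equivalent asymptotique d lambda fois caractère à une certaine puissance r fini pour les caractères suffisamment de points fixes} with $f_1 = n-2$, so that $t_{\cT}$ matches. The cost of this replacement is controlled by Lemma~\ref{lem: comparaison familles u v} with finitely many $\lambda$, and it is $o(1)$. Next apply the character identity \cite[Lemma~4.3]{Teyssier2020}: for $g$ outside a set of vanishing proportion, $\sum_{\lambda^*\vdash r} d_{\lambda^*}\ch^{\lambda}(g)$ is asymptotically a Poisson--Charlier polynomial in $f_1(g)$. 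After this, the limit becomes $\bbE\bigl|\sum_{r\le R}\frac{e^{-ra}}{r!}C_r(f_1(g))\bigr|$. Here the number of fixed points of a uniform element of $\kE$ converges to $\Poiss(1)$, including convergence of moments of all orders, which is needed for uniform integrability. Letting $R\to\infty$ gives the total variation distance between $\Poiss(1+e^{-a})$ and $\Poiss(1)$.

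The main obstacle is Step 1, the uniform control of the high frequencies near $-1$ for the non-lazy walk. I would reduce it to the lazy case by the pairing $\lambda\leftrightarrow\lambda'$, which gives identical absolute contributions, so only the diagrams with $\lambda_1\ge\lambda_1'$ need to be bounded. For those, the existing estimates in \cite{DiaconisShahshahani1981, Teyssier2020} apply without the laziness term.
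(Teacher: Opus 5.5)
Your proposal is correct and is essentially the paper's argument. The paper reruns \cite{Teyssier2020} for the pure walk on the coset $\kE(\cT,t_n)$, observing exactly as in your Step~2 that on this coset the term of $\lambda'$ coincides with that of $\lambda$, so that $\lambda$ and $\lambda'$ together play the role that $\lambda$ alone plays for the lazy walk, with all other estimates unchanged.

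The one place to phrase more carefully is Step~1. A diagram with $\lambda_1\ge\lambda_1'$ may still have $\chi^\lambda(\cT)<0$, and there the lazy eigenvalue is smaller in modulus, so the lazy estimates do not transfer diagram by diagram. The clean reduction is that $\chi^\lambda(\cT)\le\beta_\lambda:=\frac1n+(1-\frac1n)\chi^\lambda(\cT)$ always holds, so $|\chi^\lambda(\cT)|\le\max(\beta_\lambda,\beta_{\lambda'})$. Hence the pure tail sum is at most twice the lazy tail sum controlled in \cite[Lemma~4.1]{Teyssier2020}.
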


\section{Cutoff profiles for conjugacy invariant random walks}\label{s: cutoff profiles conjugacy classes}
\subsection{Cutoff profile for conjugacy classes}
\subsubsection{Setup}
Let $n\geq 2$, $\cC \in \Conj^*(\mathfrak{S}_n)$ and $t\geq 0$. Recall that 
\begin{equation}
    \mathfrak{E}(\cC, t) := \begin{cases}
        \mathfrak{S}_n \backslash\mathfrak{A}_n & \text{ if } \cC \subset \mathfrak{S}_n \backslash\mathfrak{A}_n \text{ and } t \text{ is odd},\\
        \mathfrak{A}_n & \text{ otherwise.}
    \end{cases}
\end{equation}
Define $\widehat{\kS_n}^{**} = \widehat{\kS_n} \backslash\ag [n], [1^n] \ad$. The (total variation) distance to coset stationarity for the $\Unif_{\cC}$-walk after $t$ steps can be written as:
\begin{equation}\label{eq: def distance to coset stationarity}
\begin{split}
    \dtv(\Unif_{\cC}^{*t}, \Unif_{\mathfrak{E}(\cC, t)}) & = \frac{1}{2} \sum_{\sigma \in \mathfrak{E}(\cC,t)} \bg \Unif_{\cC}^{*t}(\sigma) - \frac{2}{n!} \bd \\
    & = \frac{1}{2}\sum_{\sigma \in \mathfrak{E}(\cC,t)} \bg \sum_{\lambda \in \widehat{\mathfrak{S}_n}^{**}} \frac{d_\lambda}{n!} \chi^\lambda(\cC)^t \ch^\lambda(\sigma)\bd,
\end{split}
\end{equation}
where the last equality is obtained by applying the inverse Fourier transform.

\medskip

For $\lambda \in \widehat{\kS_n}$, denote $\rsym(\lambda) = \min(n-\lambda_1, n-\lambda_1')$.

For $R\geq 1$, denote $A(R) = \ag \lambda\in \widehat{\kS_n} \mid 1\leq \rsym(\lambda) \leq R \ad$.

For $r\geq 0$, let $\zeta(r) = \ln \max(1, r^{1/8})$, as in \cite[Section~5.5]{Olesker-TaylorTeyssierThevenin2025sharpboundsandcutoff}. We will only use that $\zeta(r)\to \infty$ as $r\to \infty$.

\subsubsection{Proof of Theorem~\ref{thm: profil TV pour les CC}}
\begin{proof}[Proof of Theorem~\ref{thm: profil TV pour les CC}]
Let $(s_n)_{n\in S}$ be a sequence of integers such that:
\begin{itemize}
    \item $s_n$ has the same parity as $t_n$ if $\cC^{(n)}$ is odd, and $s_n$ is even if $\cC^{(n)}$ is even;
    \item $s_n = \frac{1}{2}n(\ln n + a + o(1))$ as $n\to \infty$.
\end{itemize}

Recalling \eqref{eq: def distance to coset stationarity} and applying Lemma~\ref{lem: comparaison familles u v} with
$B = \widehat{\kS_n}^{**}$, $u_\lambda =\chi^\lambda(\cC^{(n)})^{t_n}$, and $v_\lambda =\chi^\lambda(\cT^{(n)})^{s_n}$, we obtain
\begin{equation}
\begin{split}
    & 2\bg\dtv(\Unif_{\cC^{(n)}}^{*t_n}, \Unif_{\mathfrak{E}(\cC^{(n)}, t_n)}) - \dtv(\Unif_{\cT^{(n)}}^{*s_n}, \Unif_{\mathfrak{E}(\cT^{(n)}, s_n)})\bd \\
    \leq \; & \sum_{\lambda\in \widehat{\kS_n}^{**}} d_\lambda \bg \chi^\lambda(\cC^{(n)})^{t_n} - \chi^\lambda(\cT^{(n)})^{s_n}\bd.
\end{split}
\end{equation}
Let $\varepsilon\in (0,1)$, and let $R\geq 1$ be an integer. 
By the triangle inequality and \cite[Theorem~5.8]{Olesker-TaylorTeyssierThevenin2025sharpboundsandcutoff}, as $S \ni n\to \infty$ we have
\begin{equation}\label{eq: bound character CC from OTTT form with zeta}
\sum_{\lambda\in \widehat{\kS_n}^{**}\backslash A(R)} d_\lambda \bg \chi^\lambda(\cC^{(n)})^{t_n} - \chi^\lambda(\cT^{(n)})^{s_n}\bd\leq 2 \sum_{\lambda\in \widehat{\kS_n}^{**}\backslash A(R)} d_\lambda^{(a-\zeta(R) + o(1))/\ln n}. 
\end{equation}
Since $\zeta(r)\to \infty$ as $r\to \infty$, it follows from \cite[Theorem~1.8]{TeyssierThevenin2025virtualdegreeswitten} that there exists $R_0$ such that for all $n$ sufficiently large we have
\begin{equation}
    \sum_{\lambda\in \widehat{\kS_n}^{**}\backslash A(R_0)} d_\lambda \bg \chi^\lambda(\cC^{(n)})^{t_n} - \chi^\lambda(\cT^{(n)})^{s_n}\bd\leq \varepsilon.
\end{equation}
Moreover, by Theorem~\ref{thm: equivalent asymptotique r fini pour les caractères suffisamment de points fixes} (note that the definition of $s_n$ ensures that $\chi^\lambda(\cC^{(n)})^{t_n}$ and $\chi^\lambda(\cT^{(n)})^{s_n}$ do not have different signs, and hence that they compensate, see Remark~\ref{rk: sign change if transpose and odd}), we have as $S \ni n \to \infty$
\begin{equation}
    \sum_{\lambda\in  A(R_0)} d_\lambda \bg \chi^\lambda(\cC^{(n)})^{t_n} - \chi^\lambda(\cT^{(n)})^{s_n}\bd \to 0.
\end{equation}
Since $\varepsilon$ was arbitrary, we have proved that as $S \ni n \to \infty$
\begin{equation}
    \bg\dtv(\Unif_{\cC^{(n)}}^{*t_n}, \Unif_{\mathfrak{E}(\cC^{(n)}, t_n)}) - \dtv(\Unif_{\cT^{(n)}}^{*s_n}, \Unif_{\mathfrak{E}(\cT^{(n)}, s_n)})\bd \to 0.
\end{equation}
The desired result then follows from Theorem~\ref{thm: profile pure transpositions}.
\end{proof}

\subsubsection{Estimates in other distances}
Proceeding as above, we can obtain estimates for the $L^2$ and $L^\infty$ distances. Given an integer $n\geq 2$, a conjugacy class $\ag \Id\ad \ne \cC \subset \kS_n$, and an integer $t\geq 0$, the $L^2$ distance to stationarity for the $\Unif_{\cC}$-walk at time $t$ is given, denoting the probability to go from $x\in \kS_n$ to $y\in \kS_n$ in $s$ steps by $p_s(x,y)$ and setting $A = |\kE(\cC,t)| = n!/2$, by
\begin{equation}
 \textup{d}_2^{\cC}(t) := \sqrt{ \frac{1}{A} \sum_{\sigma \in \kE(\cC, t) } (Ap_t(\Id, \sigma) - 1)^2} = \sqrt{\frac{1}{2}\sum_{\lambda \in \widehat{\kS_n}^{**}} d_\lambda^2 \bg \chi^\lambda(\cC^{(n)}) \bd^{2t}}.
\end{equation}
and its $L^\infty$ distance to stationarity at time $2t$ is given by
\begin{equation}
 \textup{d}_\infty^{\cC}(2t) := A\max_{\sigma \in \kA_n} p_{2t}(\Id, \sigma) - 1 = A p_{2t}(\Id, \Id) - 1 =  \frac{1}{2}\sum_{\lambda \in \widehat{\kS_n}^{**}} d_\lambda^2 \bg \chi^\lambda(\cC^{(n)}) \bd^{2t}.
\end{equation}
There, the penultimate equality comes from the facts that the walk is a simple random walk on a vertex-transitive graph, and hence the most likely vertex after an even number of steps is the starting vertex; and the last equality comes from the inverse Fourier transform.

\begin{proposition}
Let $\delta>0$. For each $n$, let $ \cC^{(n)} \in\Conj^*(\kS_n)$. Assume that $f_1(\cC^{(n)})\geq \delta n$ for $n$ large enough. Let $a\in \bbR$.
Let $(t_n)$ be a sequence of positive real numbers such that $t_n = \frac{\ln n + a + o(1)}{\ln(n/f_1(\cC^{(n)}))}$. Let $\theta >0$.   
As $n\to \infty$, we have
\begin{equation}
   \frac{1}{2} \sum_{\lambda \in \widehat{\kS_n}^{**}} \pg d_\lambda \bg \chi^\lambda(\cC^{(n)}) \bd^{t_n}\pd^{\theta} \to \sum_{r\geq 1} e^{-ar\theta} \sum_{\mu \vdash r} \pg\frac{d_\mu}{r!}\pd^\theta.
\end{equation}
In particular, as $n\to \infty$
\begin{equation}
 \frac{1}{2}\sum_{\lambda \in \widehat{\kS_n}^{**}} d_\lambda^2 \bg \chi^\lambda(\cC^{(n)}) \bd^{2t_n} \to e^{e^{-2a}}-1.
\end{equation}
\end{proposition}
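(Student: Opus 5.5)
We split the sum over $\widehat{\kS_n}^{**}$ into the finite-level part $A(R)$ and the remainder, exactly as in the proof of Theorem~\ref{thm: profil TV pour les CC}.

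\textbf{Finite-level part.} Fix $R$. For $\lambda$ with $\lambda_1 = n-r$ and $1\leq r\leq R$, Corollary~\ref{cor: equivalent asymptotique d lambda fois caractère à une certaine puissance r fini pour les caractères suffisamment de points fixes} with $c=a$ applies, since $f_1(\cC^{(n)})\geq \delta n$ implies $f_1/\sqrt n\to\infty$. It gives $d_\lambda |\chi^\lambda(\cC^{(n)})|^{t_n} \to \frac{e^{-ra}}{r!} d_{\lambda^*}$. The corollary is stated for the signed power; since $\chi^\lambda(\cC^{(n)})>0$ for large $n$ by Theorem~\ref{thm: character estimate many fixed points with error term}, taking absolute values is harmless. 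Raising to the power $\theta$ gives $e^{-ar\theta}(d_{\lambda^*}/r!)^\theta$.

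For the conjugate diagrams, with $\lambda_1'=n-r$, Remark~\ref{rk: sign change if transpose and odd} shows that $|\chi^{\lambda}|=|\chi^{\lambda'}|$ and $d_\lambda = d_{\lambda'}$, so they give the same limit. For $n>2R$ these two families are disjoint, and each is in bijection with $\{\mu\vdash r\}$ via $\lambda\mapsto\lambda^*$. Summing over the finitely many $\lambda\in A(R)$ and multiplying by $\tfrac12$ therefore gives
\[
\sum_{r=1}^R e^{-ar\theta}\sum_{\mu\vdash r}\pg \frac{d_\mu}{r!}\pd^\theta .
\]

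\textbf{Tail.} I would use \cite[Theorem~5.8]{Olesker-TaylorTeyssierThevenin2025sharpboundsandcutoff} in the form already used in \eqref{eq: bound character CC from OTTT form with zeta}: for $\lambda\notin A(R)$,
\[
d_\lambda|\chi^\lambda(\cC^{(n)})|^{t_n}\leq d_\lambda^{(a-\zeta(R)+o(1))/\ln n},
\]
and I would assume this holds uniformly for real $t_n$. Raising to the power $\theta$ gives $d_\lambda^{\theta(a-\zeta(R)+o(1))/\ln n}$. Since $\zeta(R)\to\infty$, \cite[Theorem~1.8]{TeyssierThevenin2025virtualdegreeswitten} (the Witten-zeta type bound $\sum_{\rsym(\lambda)>R} d_\lambda^{-s/\ln n}\to$ small when $s$ is large) makes this tail at most $\varepsilon$ for $R$ large, uniformly in large $n$. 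The $\theta$ only rescales the exponent, so we choose $R$ with $\theta(\zeta(R)-a)$ large enough.

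\textbf{Conclusion.} Combining the two parts: let $n\to\infty$, then $R\to\infty$, and use $\sum_{\mu\vdash r}(d_\mu/r!)^\theta \leq p(r)$ to get convergence of the limiting series; this proves the claim. For the second statement, take $\theta=2$ with time $2t_n$, which amounts to replacing $a$ by $2a$ in the time parametrisation. Then $\sum_{\mu\vdash r} d_\mu^2/r!^2 = 1/r!$, so the limit is $\sum_{r\geq1}e^{-2ar}/r! = e^{e^{-2a}}-1$. Note that $(d_\lambda|\chi|^{t_n})^2 = d_\lambda^2|\chi|^{2t_n}$, so this is exactly the $\theta=2$ case.

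\textbf{Main obstacle.} The tail step is the delicate one: checking that the cited bound from \cite{Olesker-TaylorTeyssierThevenin2025sharpboundsandcutoff} holds for real (non-integer) $t_n$ and all $\theta>0$, including small $\theta$. For small $\theta$ the exponent $\theta(\zeta(R)-a)$ must still exceed the threshold in \cite[Theorem~1.8]{TeyssierThevenin2025virtualdegreeswitten}, and this forces $R$ to depend on $\theta$.
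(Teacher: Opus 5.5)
Your proposal follows the paper's proof step for step. You use the same split of $\widehat{\mathfrak{S}_n}^{**}$ into $A(R)$ and its complement. On $A(R)$ you use the corollary on $d_\lambda\chi^\lambda(\sigma_n)^{t_n}$ with $c=a$, together with the conjugation symmetry. For the tail you raise the bound from \cite[Theorem~5.8]{Olesker-TaylorTeyssierThevenin2025sharpboundsandcutoff} to the power $\theta$ and sum it via \cite[Theorem~1.8]{TeyssierThevenin2025virtualdegreeswitten}, with $R$ depending on $\varepsilon$ and $\theta$.

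One justification is wrong as written, although the argument does not depend on it. The bound $\sum_{\mu\vdash r}(d_\mu/r!)^\theta\le p(r)$ does not show that $\sum_{r\ge 1}e^{-ar\theta}\sum_{\mu\vdash r}(d_\mu/r!)^\theta$ converges when $a\le 0$. In that case $e^{-ar\theta}\ge 1$, so $\sum_r e^{-ar\theta}p(r)=\infty$.

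There are two ways to fix this.
\begin{itemize}
\item Use $d_\mu\le\sqrt{r!}$, which follows from $\sum_{\mu\vdash r}d_\mu^2=r!$. This gives the bound $e^{-ar\theta}p(r)(r!)^{-\theta/2}$, which is summable for every $a\in\mathbb{R}$ and $\theta>0$. This is the paper's bound; its last expression should read $(r!)^{-\theta/2}$ rather than $1/\sqrt{r!}$.
\item Note that convergence comes for free from your two-part argument. All terms are nonnegative, so each partial sum $S_R=\sum_{r=1}^R e^{-ar\theta}\sum_{\mu\vdash r}(d_\mu/r!)^\theta$ is at most the $\liminf$ of the full sum. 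For every $R\ge R_0$, the $\limsup$ is at most $S_R+\varepsilon$. Hence $(S_R)$ is bounded, and its limit equals the limit of the full sum.
\end{itemize}

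The phrase ``take $\theta=2$ with time $2t_n$, which amounts to replacing $a$ by $2a$'' is inaccurate, since time $2t_n$ is not $t_n$ with $a$ replaced by $2a$. The reasoning you give right after is the correct one: the second claim is the case $\theta=2$ at the same time $t_n$. The factor $e^{-2ar}$ comes from $e^{-ar\theta}$, and $\sum_{\mu\vdash r}d_\mu^2/(r!)^2=1/r!$.
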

\begin{proof}
Assume now that $a\geq 0$. Let $\varepsilon>0$. Proceeding as in the proof of Theorem~\ref{thm: profil TV pour les CC}, there exists $R_0$ (which depends on $\varepsilon$, $\delta$, and $\theta$) such that for $n$ large enough we have
    \begin{equation}
        \sum_{\lambda \in \widehat{\kS_n}^{**}\backslash A(R_0)} \pg d_\lambda \bg \chi^\lambda(\cC^{(n)}) \bd^{t_n}\pd^{\theta} \leq \varepsilon.
    \end{equation}
Moreover, as $n\to \infty$, uniformly over all $\lambda \in A(R_0)$, by Corollary~\ref{cor: equivalent asymptotique d lambda fois caractère à une certaine puissance r fini pour les caractères suffisamment de points fixes} we have as $n\to \infty$,
\begin{equation}
    \pg d_\lambda \bg \chi^\lambda(\cC^{(n)}) \bd^{t_n}\pd^{\theta} \to  \pg\frac{d_{\lambda^*}}{r!}\pd^{\theta} e^{-ar\theta}.
\end{equation}
Since $\varepsilon$ was arbitrary, it follows as in the proof of Theorem~\ref{thm: profil TV pour les CC} that as $n\to \infty$
\begin{equation}
    \sum_{\lambda \in \widehat{\kS_n}^{**}} \pg d_\lambda \bg \chi^\lambda(\cC^{(n)}) \bd^{t_n}\pd^{\theta} \to 2\sum_{r\geq 1} e^{-ar\theta} \sum_{\mu \vdash r} \pg\frac{d_\mu}{r!}\pd^\theta,
\end{equation}
where the sum above is convergent since for any $r\geq 1$,
\begin{equation}
    e^{-ar\theta} \sum_{\mu \vdash r} \pg\frac{d_\mu}{r!}\pd^\theta \leq  e^{-ar\theta} p(r)\max_{\mu \vdash r} \pg\frac{d_\mu}{r!}\pd^\theta \leq  e^{-ar\theta} p(r) \pg\frac{\sqrt{r!}}{r!}\pd^\theta  = \frac{e^{-ar\theta} p(r)}{\sqrt{r!}}.
\end{equation}
In particular, if $\theta = 2$, we have 
\begin{equation}
     \sum_{\lambda \in \widehat{\kS_n}^{**}} \pg d_\lambda \bg \chi^\lambda(\cC^{(n)}) \bd^{t_n}\pd^{\theta} \to 2\sum_{r\geq 1} \frac{e^{-2ar}}{r!} = 2\pg e^{e^{-2a}}-1 \pd. \qedhere
\end{equation}
\end{proof}
We immediately deduce the shapes of the $L^2$ and $L^{\infty}$ profiles.
\begin{corollary}
Let $\delta>0$. For each $n$, let $ \cC^{(n)} \in\Conj^*(\kS_n)$. Assume that $f_1(\cC^{(n)})\geq \delta n$ for $n$ large enough. Let $a\in \bbR$.
Let $S\subset \bbN$ and $(t_n)_{n\in S}$ be a sequence of positive integers such that, along $S$, $t_n = \frac{\ln n + a + o(1)}{\ln(n/f_1(\cC^{(n)}))}$. Let $\theta >0$.   
As $S \ni n\to \infty$, we have
\begin{equation}
    \textup{d}_2^{\cC^{(n)}}(t_n) \to \sqrt{e^{e^{-2a}}-1} \quad \quad \text{ and } \quad \quad \textup{d}_\infty^{\cC^{(n)}}(2t_n) \to e^{e^{-2a}}-1.
\end{equation}
\end{corollary}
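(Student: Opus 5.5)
The corollary should follow by plugging the preceding proposition with $\theta=2$ into the spectral formulas for $\textup{d}_2$ and $\textup{d}_\infty$ displayed above, so the work is mostly bookkeeping. First, note that these formulas for $\textup{d}_2^{\cC^{(n)}}(t_n)$ and $\textup{d}_\infty^{\cC^{(n)}}(2t_n)$ express both quantities through the single sum
\begin{equation*}
\Sigma_n := \frac{1}{2}\sum_{\lambda \in \widehat{\kS_n}^{**}} d_\lambda^2 \bg \chi^\lambda(\cC^{(n)}) \bd^{2t_n},
\end{equation*}
with $\textup{d}_2^{\cC^{(n)}}(t_n) = \sqrt{\Sigma_n}$ and $\textup{d}_\infty^{\cC^{(n)}}(2t_n) = \Sigma_n$.

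The main step is then to apply the proposition along the subsequence $S$ with $\theta=2$. The proposition is stated for full sequences of positive reals, but its proof only uses Corollary~\ref{cor: equivalent asymptotique d lambda fois caractère à une certaine puissance r fini pour les caractères suffisamment de points fixes} together with the tail bound coming from \cite{Olesker-TaylorTeyssierThevenin2025sharpboundsandcutoff} and \cite{TeyssierThevenin2025virtualdegreeswitten}. Both of these are uniform statements in $n$, so the same argument runs verbatim along $S$. Alternatively, one can extend $(t_n)_{n\in S}$ to a full sequence by setting $t_n = \frac{\ln n + a}{\ln(n/f_1(\cC^{(n)}))}$ for $n\notin S$; the proposition allows real times, so this extension is admissible. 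Either way, the proposition gives $\Sigma_n \to e^{e^{-2a}}-1$ as $S\ni n\to\infty$, and continuity of the square root yields the limit for $\textup{d}_2$.

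The only point needing care is that the proposition's proof begins with ``assume $a\ge 0$'', while here $a\in\bbR$ is arbitrary. For $a<0$, I would check that the tail bound still applies. The exponent $(a-\zeta(R)+o(1))/\ln n$ in \eqref{eq: bound character CC from OTTT form with zeta} becomes negative once $R$ is large in terms of $a$, since $\zeta(R)\to\infty$. Hence \cite[Theorem~1.8]{TeyssierThevenin2025virtualdegreeswitten} still makes the tail beyond $A(R_0)$ smaller than $\varepsilon$, with $R_0$ now also depending on $a$. The finitely many levels in $A(R_0)$ are handled by Corollary~\ref{cor: equivalent asymptotique d lambda fois caractère à une certaine puissance r fini pour les caractères suffisamment de points fixes} for any real $a$, and the limiting series $\sum_r e^{-2ar}/r!$ converges for every $a$. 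So the restriction $a\ge 0$ is harmless, and this is the step I would verify most carefully.
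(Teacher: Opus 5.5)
Your proposal is correct and follows the paper's route: take $\theta=2$ in the preceding proposition and read off $\textup{d}_2^{\cC^{(n)}}(t_n)=\sqrt{\Sigma_n}$ and $\textup{d}_\infty^{\cC^{(n)}}(2t_n)=\Sigma_n$ from the spectral formulas, which is all the paper does (``We immediately deduce\ldots''). Your extra bookkeeping is sound and makes explicit what the paper leaves implicit: passing to the subsequence $S$, and noting that the stray ``assume $a\ge 0$'' in the proposition's proof is harmless because $\zeta(R)\to\infty$.
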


\subsection{Cutoff and cutoff profile for random involutions}
The goal of this section is to prove Theorem~\ref{thm: profil pour les involutions aléatoires}. In this section we omit the dependence on $p\in (0,1)$ from asymptotic notation, and write for example $O(\cdot)$ for $O_p(\cdot)$.

\subsubsection{Some intuition}
 Let $p\in (0,1)$. Let $n$ be a large even integer. Recall from \eqref{eq: RI def} that the random involution walk on $\kS_n$ with parameter $p$ is the walk driven by the measure
\begin{equation}
    \RI_{n,p} = \sum_{s=0}^{n/2} \binom{n/2}{s}(1-p)^{s}p^{n/2-s} \Inv_{n,s},
\end{equation}
where $\Inv_{n,s}$ is the uniform measure on permutations of $\kS_n$ with cycle type $(2^{s}, 1^{n-2s})$.
By definition this is a class function, and the resulting Markov chain is aperiodic.

\medskip

 When one picks a permutation according to $\RI_{n,p}$, one picks a conjugacy invariant permutation $\sigma$ whose support $\supp(\sigma)$ follows approximately a normal law of average $(1-p)n$ and standard deviation $\Theta(\sqrt{n})$, so $\supp(\sigma)$ is \textit{very} concentrated around $(1-p)n$. To prove a cutoff it would be enough that typically $\supp(\sigma) = (1-p)n (1+o(1))$, and to prove the profile we need that typically $\supp(\sigma) = (1-p)n (1+o(1/\ln n))$.

\medskip

To write this rigorously we couple random involutions with a truncated version of random involutions, for which the support is \textit{always} in an interval around $(1-p)n$, and then estimate the eigenvalues by averaging, which gives essentially the same estimates as for conjugacy classes.

\subsubsection{Coupling with truncated random involutions (TRI)}
First, we define a truncated binomial distribution. Denote the probability measure $\Bin(n/2,1-p)$ by $\varphi$, set 
\begin{equation}
    m = n^{4/5},
\end{equation}
and let
\begin{equation}
    I = I_{n,p} = [(1-p)n/2 -m, (1-p)n/2 +m]\cap \bbZ.
\end{equation}
For $k\in I$, set 
\begin{equation}
    \varphi_{\textup{trunc}}(k) = \frac{\varphi(k)}{\varphi(I)}.
\end{equation}
In words, $\varphi_{\textup{trunc}}$ is a rescaled truncation of $\varphi$ on a small interval of size $\asymp m = n^{4/5}$ close to its average.
We have $\mathrm{Var}(\varphi) \asymp n$ so $1-\varphi(I) =O(n/m^2) = O(n^{-3/5})$ by the Bienaymé--Chebyshev inequality. (Much better bounds can be proved but this is not necessary here.)
It follows that
\begin{equation}\label{eq: bound TV phi psi}
    \dtv(\varphi,\varphi_{\textup{trunc}}) = O(n^{-3/5}).
\end{equation}

Let $M \sim \varphi$ and $N\sim \varphi_{\textup{trunc}}$. Let $\sigma$ (resp. $\tau$) be a uniform involution with support $2M$ (resp. $2N$). The law of $\sigma$ is $\RI_{n,p}$, and the law of $\tau$ is
\begin{equation}
    \TRI_{n,p} = \frac{1}{\varphi(I_{n,p})}\sum_{s\in I_{n,p}} \binom{n/2}{s}(1-p)^{s}p^{n/2-s} \Inv_{n,s},
\end{equation}
By construction, we have $ \dtv(\RI_{n,p}, \TRI_{n,p}) \leq \dtv(\varphi,\varphi_{\textup{trunc}})$, so using \eqref{eq: bound TV phi psi} we obtain
\begin{equation}
    \dtv(\RI_{n,p}, \TRI_{n,p}) = O(n^{-3/5}).
\end{equation}
A union bound immediately implies the following.

\begin{lemma}\label{lem: coupling RI TRI}
   Let $p\in (0,1)$. As $n\to \infty$, uniformly over all $0\leq t \leq 10 \log_{1/p}n$, we have
   \begin{equation}
    \dtv(\RI_{n,p}^{*t}, \TRI_{n,p}^{*t}) = O\pg \frac{\ln n}{n^{3/5}}\pd = o(1).
\end{equation}
\end{lemma}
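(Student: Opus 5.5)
We plan to deduce Lemma~\ref{lem: coupling RI TRI} from the one-step bound $\dtv(\RI_{n,p}, \TRI_{n,p}) = O(n^{-3/5})$ established just above, using a product coupling of the increments. The basic fact is subadditivity of total variation under convolution: for probability measures $\mu_1,\mu_2,\nu_1,\nu_2$ on $\kS_n$ we have $\dtv(\mu_1*\mu_2,\nu_1*\nu_2)\leq \dtv(\mu_1,\nu_1)+\dtv(\mu_2,\nu_2)$. Iterating this gives $\dtv(\RI_{n,p}^{*t},\TRI_{n,p}^{*t})\leq t\,\dtv(\RI_{n,p},\TRI_{n,p})$.

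To justify the subadditivity by coupling, we proceed in three steps.
\begin{enumerate}
\item For each step $i\in\{1,\dots,t\}$, take an optimal coupling $(\sigma_i,\tau_i)$ with $\sigma_i\sim\RI_{n,p}$, $\tau_i\sim\TRI_{n,p}$ and $\bbP(\sigma_i\neq\tau_i)=\dtv(\RI_{n,p},\TRI_{n,p})$. Concretely, couple $M\sim\varphi$ and $N\sim\varphi_{\textup{trunc}}$ optimally, and when $M=N$ use the same uniform involution.
\item Take the pairs $(\sigma_i,\tau_i)$ independent across $i$. Then $\sigma_1\cdots\sigma_t\sim\RI_{n,p}^{*t}$ and $\tau_1\cdots\tau_t\sim\TRI_{n,p}^{*t}$.
\item The products can differ only if some $\sigma_i\neq\tau_i$. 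A union bound therefore gives
\[
\dtv(\RI_{n,p}^{*t},\TRI_{n,p}^{*t})\leq \bbP(\sigma_1\cdots\sigma_t\neq\tau_1\cdots\tau_t)\leq \sum_{i=1}^t\bbP(\sigma_i\neq\tau_i)=t\,O(n^{-3/5}).
\]
\end{enumerate}

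It then remains to use $t\leq 10\log_{1/p}n = \frac{10\ln n}{\ln(1/p)} = O_p(\ln n)$. This yields the bound $O(\ln n/n^{3/5})$, uniformly in $t$ in the stated range, and this bound is $o(1)$.

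There is no real obstacle here. The only point needing care is that the $O(n^{-3/5})$ bound must be uniform, which it is since it comes from Chebyshev's inequality with $\mathrm{Var}(\varphi)\asymp_p n$. With the constants depending only on $p$, the implicit constant in the final bound is independent of $t$.
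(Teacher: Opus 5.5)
Your proposal is correct and matches the paper's argument. The paper bounds $\dtv(\RI_{n,p},\TRI_{n,p})\leq\dtv(\varphi,\varphi_{\textup{trunc}})=O(n^{-3/5})$ using Chebyshev's inequality, then applies a union bound over the $t=O_p(\ln n)$ steps; your independent step-by-step coupling is exactly the justification for that union bound.
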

As a result, the asymptotics for the total variation distance to stationarity for $\RI_{n,p}$ and $\TRI_{n,p}$ are the same, and we only need to study the second one.

\subsubsection{Proof of Theorem~\ref{thm: profil pour les involutions aléatoires}}
As we will see, the proof of Theorem~\ref{thm: profil pour les involutions aléatoires} is similar to that of Theorem~\ref{thm: profil TV pour les CC}. The first difference is that the chain is aperiodic so we can compare it to the original random transposition chain (which has laziness $1/n$), as done in \cite{Nestoridi2024comparisonstar}, rather than with pure transpositions. The second difference is that the eigenvalues are a mixture of eigenvalues of walks driven by the uniform measures on conjugacy classes, so some averaging needs to be done.
\begin{proof}[Proof of Theorem~\ref{thm: profil pour les involutions aléatoires}]
     Let $(s_n)_{n\in S}$ be a sequence of integers such that $s_n = \frac{1}{2} n ((\ln n) + a + o(1))$.  

By conjugacy invariance, the eigenvalues of the $\TRI_{n,p}$-walk are, for $\lambda \vdash n$,
\begin{equation}
    \alpha_\lambda := \bbE_{\tau \sim \TRI_{n,p}} \cg \chi^\lambda(\tau)\cd = \frac{1}{\varphi(I_{n,p})}\sum_{s\in I_{n,p}} \binom{n/2}{s}(1-p)^{s}p^{n/2-s} \chi^\lambda(2^s, 1^{n-2s}).
\end{equation}
Let $\T_n = \frac{1}{n} + \pg 1-\frac{1}{n} \pd \Unif_{\cT_n}$ be the driving measure of the transposition walk (with laziness $1/n$). Denote the eigenvalues of the $\T_n$-walk by, for $\lambda \vdash n$, $\beta_\lambda := \frac{1}{n} + \pg 1-\frac{1}{n} \pd\chi^\lambda(2, 1^{n-2})$.

By Lemma~\ref{lem: coupling RI TRI} and applying Lemma~\ref{lem: comparaison familles u v} with
$B = \widehat{\kS_n}^{*} = \widehat{\kS_n}\backslash\ag [n] \ad$, $u_\lambda =\alpha_\lambda^{t_n}$, and $v_\lambda = \beta_\lambda^{s_n}$, we obtain
\begin{equation}
\begin{split}
    & 2\bg\dtv(\RI_{n,p}^{*t_n}, \Unif_{\kS_n}) - \dtv(\T_n^{*s_n}, \Unif_{\kS_n})\bd +o(1) \\
    = \; & 
    2\bg\dtv(\TRI_{n,p}^{*t_n}, \Unif_{\kS_n}) - \dtv(\T_n^{*s_n}, \Unif_{\kS_n})\bd
    \leq \sum_{\lambda\in \widehat{\kS_n}^{*}} d_\lambda \bg \alpha_\lambda^{t_n} - \beta_\lambda^{s_n}\bd.
\end{split}
\end{equation}     
Let $\varepsilon\in (0,1)$, and let $R\geq 1$ be an integer. 
By \cite[Theorem~5.8]{Olesker-TaylorTeyssierThevenin2025sharpboundsandcutoff}, we have 
\begin{equation}\label{eq: bound characters bulk}
d_\lambda\bg \alpha_\lambda \bd^{t_n}
\leq \max_{\sigma \in \kS_n \du |f_1(\sigma) -pn| \leq m}d_\lambda\bg \chi^\lambda(\sigma)\bd^{t_n} \leq  d_\lambda^{(a-\zeta(R) + o(1))/\ln n},
\end{equation}
so we obtain, exactly as in the proof of Theorem~\ref{thm: profil TV pour les CC}, that 
\begin{equation}
    \sum_{\lambda\in \widehat{\kS_n}^{*} \du \rsym(\lambda)>R} d_\lambda\bg \alpha_\lambda\bd^{t_n} \leq \sum_{\lambda\in \widehat{\kS_n}^{*} \du \rsym(\lambda)>R} d_\lambda^{(a-\zeta(R) + o(1))/\ln n}.
\end{equation}
Therefore, by the triangle inequality and using also \cite[Lemma~4.1]{Teyssier2020} to bound the sum for lazy transpositions, there exists $R_0$ such that for all $n$ large enough we have
\begin{equation}
\begin{split}
 & \sum_{\lambda\in \widehat{\kS_n}^{*} \du \rsym(\lambda)>R_0}d_\lambda \bg \alpha_\lambda^{t_n} - \beta_\lambda^{s_n}\bd \\
 \leq \; & \sum_{\lambda\in \widehat{\kS_n}^{*} \du \rsym(\lambda)>R_0}d_\lambda \bg \alpha_\lambda\bd^{t_n}  + \sum_{\lambda\in \widehat{\kS_n}^{*} \du \rsym(\lambda)>R_0}d_\lambda \bg\beta_\lambda\bd^{s_n}  \\ 
 \leq \; & \varepsilon.
\end{split}
\end{equation}
Now denote
\begin{equation}
    A'(R_0) := \ag \lambda\in \widehat{\kS_n} \mid 1\leq r(\lambda) \leq R_0 \ad = \ag \lambda\in \widehat{\kS_n} \mid 1\leq n-\lambda_1 \leq R_0 \ad,
\end{equation}
and
\begin{equation}
    A''(R_0) := \ag \lambda\in \widehat{\kS_n} \mid 1\leq r(\lambda') \leq R_0 \ad = \ag \lambda\in \widehat{\kS_n} \mid 0\leq n-\lambda_1' \leq R_0 \ad.
\end{equation}
Averaging Theorem~\ref{thm: character estimate many fixed points with error term} we obtain that uniformly over $\lambda \in A'(R_0)$, we have 
\begin{equation}
    \alpha_\lambda \sim p^{r(\lambda)},
\end{equation}
and uniformly over $\lambda \in  A''(R_0)$, we have 
\begin{equation}
    \alpha_\lambda = (1/2 + o(1))p^{r(\lambda)} - (1/2 + o(1))p^{r(\lambda)} = o(1),
\end{equation}
since a permutation picked according to $\TRI_{n,p}$ has probability $1/2 + o(1)$ of being even so there are sign compensations (see Remark~\ref{rk: sign change if transpose and odd}). 
It is also known from \cite{Teyssier2020} that $\sum_{\lambda \in A''(R_0)} d_\lambda \beta_\lambda^{s_n} = o(1)$.
We deduce that
\begin{equation}
    \sum_{\lambda\in A''(R_0)} d_\lambda \bg \alpha_\lambda^{t_n} - \beta_\lambda^{s_n}\bd = o(1)
\end{equation}
and
\begin{equation}
\begin{split}
      \sum_{\lambda\in A'(R_0)} d_\lambda \bg \alpha_\lambda^{t_n} - \beta_\lambda^{s_n}\bd & = \sum_{\lambda\in A'(R_0)} d_\lambda \bg \frac{e^{-ar +o(1)}}{n^r} - \frac{e^{-ar +o(1)}}{n^r}\bd \\ & = \sum_{\lambda\in A'(R_0)} d_\lambda  o(n^{-r}) = o(1).
\end{split}  
\end{equation}
Putting everything together, and since $\varepsilon$ was arbitrary, we obtain
\begin{equation}
    \bg\dtv(\RI_{n,p}^{*t_n}, \Unif_{\kS_n}) - \dtv(\T_n^{*s_n}, \Unif_{\kS_n})\bd  = o(1).
\end{equation}
Plugging \cite[Theorem~1.1]{Teyssier2020} into this concludes the proof.
\end{proof}

\section{Numerics for transpositions on a deck of 52 cards}
Let $n\geq 2$ and $t\geq 0$. For $r\geq 1$, denote
\begin{equation}
    M_r^{(n)}(t) = \frac{1}{2}\sum_{\sigma \in \mathfrak{E}(\cT,t)} \bg \sum_{\lambda \in \widehat{\mathfrak{S}_n}^{**} \du 1\leq \rsym(\lambda) \leq r }\frac{d_\lambda}{n!} \chi^\lambda(\cT)^t \ch^\lambda(\sigma)\bd.
\end{equation}
and
\begin{equation}
    E_r^{(n)}(t) =  \frac{1}{2} \sum_{\lambda \in \widehat{\mathfrak{S}_n}^{**} \du \rsym(\lambda) = r} d_\lambda \bg \chi^\lambda(\cT)\bd^t.
\end{equation}
Denote also $\textup{d}^{(n)}(t) = \dtv(\Unif_{\cT}^{*t}, \Unif_{\mathfrak{E}(\cT, t)})$.
Then by \eqref{eq: def distance to coset stationarity} and Lemma~\ref{lem:approximation lemma a bit more general prelim}, we have 
\begin{equation}\label{eq: distance rewritten with M and E}
    \bg \textup{d}^{(n)}(t) - M_1^{(n)}(t)\bd \leq \sum_{r\geq 2} E_r^{(n)}(t).
\end{equation}
We will first bound the error terms $\sum_{r\geq 2} E_r^{(n)}(t)$ and estimate the main term $M_1^{(n)}(t)$ for any value of $n$ and $t$, and then provide numerics for $n=52$.

\subsection{Bound on the contribution of most representations}

We will use the following classical eigenvalue estimate.
\begin{lemma}\label{lem:classical bound on eigenvalues for transpositions}
    Let $n\geq 2$ be even. Let $\lambda \vdash n$. The following holds.
\begin{enumerate}
    \item If $1\leq \rsym(\lambda)\leq n/2$, then $|\chi^\lambda(\cT)| \leq 1 - \frac{2 \rsym(\lambda)(n-\rsym(\lambda)+1)}{n(n-1)}$.
    \item If $\rsym(\lambda)\geq n/2$, then $|\chi^\lambda(\cT)| \leq 1/2$.
\end{enumerate}
\end{lemma}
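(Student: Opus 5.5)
The plan is to use the classical formula for the normalized character at a transposition,
\begin{equation}
    \chi^\lambda(\cT) = \frac{\sum_i \binom{\lambda_i}{2} - \sum_j \binom{\lambda_j'}{2}}{\binom{n}{2}} = \frac{2}{n(n-1)}\sum_{(i,j)\in\lambda}(j-i),
\end{equation}
that is, the sum of contents divided by $\binom n2$ (Frobenius' formula, equivalently the Jucys--Murphy eigenvalue). Since $\chi^{\lambda'}(\cT) = -\chi^\lambda(\cT)$ (Remark~\ref{rk: sign change if transpose and odd}), we may assume $r:=\rsym(\lambda) = n-\lambda_1$, so that $\lambda_1 \geq \lambda_1'$.

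For (a), let $r = n-\lambda_1$ with $1\le r\le n/2$. The first row contributes $\binom{\lambda_1}{2}$ to the content sum. The remaining $r$ boxes form a diagram $\lambda^*$ lying in rows $2,3,\ldots$. Shifted by the row offset, each box $(i,j)$ with $i\ge2$ has content $j-i \le (j-(i-1)) - 1$, so the content sum of these boxes is at most $\mathrm{cont}(\lambda^*) - r \le \binom{r}{2} - r$, using that the maximal content sum for a diagram of size $r$ is that of a single row. Hence
\begin{equation}
\binom n2 \chi^\lambda(\cT) \le \binom{n-r}{2} + \binom r2 - r.
\end{equation}
A direct expansion gives $\binom n2 - \binom{n-r}{2} - \binom r2 + r = r(n-r) + r = r(n-r+1)$, which yields $\chi^\lambda(\cT) \le 1 - \frac{2r(n-r+1)}{n(n-1)}$.

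For the lower bound on $\chi^\lambda(\cT)$, the same convention forces $\lambda_1'\le\lambda_1$. The negative contributions come from the column sums, $\sum_j\binom{\lambda'_j}{2}$, and these are bounded by the positive ones by the dominance comparison between $\lambda$ and $\lambda'$. More directly, $\chi^\lambda \ge -\chi^{\lambda}$-type bounds follow because under $\lambda_1\ge\lambda_1'$ one expects $\chi^\lambda(\cT)\ge -(1-\tfrac{2r(n-r+1)}{n(n-1)})$. To check this, apply the same row-removal estimate to $\lambda'$ with $r' = n-\lambda_1'\ge r$ to get $-\chi^\lambda = \chi^{\lambda'} \le 1-\frac{2r'(n-r'+1)}{n(n-1)}$. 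If $r'\le n/2$, the function $x(n-x+1)$ is increasing on $[0,n/2]$, so this bound is at most the one for $r$. If $r'>n/2$, the case (b)-type argument below applies.

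For (b), and for the tail of (a), the main obstacle is that the content-sum bound must be sharpened when both $\lambda_1$ and $\lambda_1'$ are at most $n/2$. The approach is to use the bound $\sum_i\binom{\lambda_i}{2}\le \frac{\lambda_1-1}{2}\sum_i\lambda_i = \frac{n(\lambda_1-1)}{2}$, and similarly for the columns. This gives $|\chi^\lambda(\cT)| \le \frac{\max(\lambda_1,\lambda'_1)-1}{n-1}\le \frac{n/2-1}{n-1}\le\frac12$. Here $n$ is even, so $\lambda_1\le n/2$ exactly when $\rsym\ge n/2$. The same estimate handles the residual case $r'>n/2$ in (a), since then $\frac{\lambda'_1-1}{n-1}<\frac12\le 1-\frac{2r(n-r+1)}{n(n-1)}$ fails only for $r$ near $n/2$, where one compares directly.
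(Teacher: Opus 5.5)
Most of your argument is correct, and it is a self-contained version of what the paper simply imports (Diaconis' Lemma~3.D.2 together with $\chi^{\lambda'}(\cT)=-\chi^\lambda(\cT)$). The following pieces are fine:
\begin{itemize}
\item the first-row removal giving $\chi^\lambda(\cT)\le 1-\frac{2r(n-r+1)}{n(n-1)}$ for $r=n-\lambda_1$;
\item the lower bound when $r'=n-\lambda'_1\le n/2$, via monotonicity of $x\mapsto x(n-x+1)$ on $[0,n/2]$;
\item part (b).
\end{itemize}

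\textbf{The gap.} The last sub-case of (a) is not proved: $\lambda_1\ge\lambda'_1$, $r=n-\lambda_1\le n/2$, and $r'>n/2$.
\begin{itemize}
\item The chain through $\tfrac12$ that you write fails whenever $4r(n-r+1)>n(n-1)$. That is every $r$ within about $\sqrt{3n}/2$ of $n/2$; at $r=n/2$ the target is $\frac{n/2-2}{n-1}<\frac12$. Saying ``one compares directly'' does not say how.
\item ``The same estimate'' as in (b) literally gives $\frac{\max(\lambda_1,\lambda'_1)-1}{n-1}=\frac{\lambda_1-1}{n-1}$ here, which is useless. You need the one-sided version that keeps only the column term.
\end{itemize}

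\textbf{The fix.} Since $n$ is even and $\lambda'_1<n/2$, we have $\lambda'_1\le n/2-1$. Hence
\[
-\chi^\lambda(\cT)\le\frac{\sum_j\binom{\lambda'_j}{2}}{\binom n2}\le\frac{\lambda'_1-1}{n-1}\le\frac{n/2-2}{n-1}=1-\frac{2\cdot\frac n2\bigl(\frac n2+1\bigr)}{n(n-1)}\le 1-\frac{2r(n-r+1)}{n(n-1)},
\]
where the last inequality again uses monotonicity of $x(n-x+1)$ on $[0,n/2]$. This is exactly where the evenness of $n$ enters. For odd $n$ the same chain only yields $\frac{n-3}{2(n-1)}$, which exceeds the target $\frac{n-3}{2n}$ at $r=\frac{n-1}{2}$.

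\textbf{A false claim to delete.} In your middle paragraph you assert that, under $\lambda_1\ge\lambda'_1$, the column sum is bounded by the row sum ``by the dominance comparison between $\lambda$ and $\lambda'$.'' This is false in general, because $\lambda_1\ge\lambda'_1$ does not imply that $\lambda$ dominates $\lambda'$. For example, $\lambda=(6,3,3,3,3)$ has $\lambda_1=6>5=\lambda'_1$, yet
\[
\sum_i\binom{\lambda_i}{2}-\sum_j\binom{\lambda'_j}{2}=27-30<0.
\]
The phrase ``one expects'' is likewise not an argument. Your actual proof never uses these sentences, so you can simply delete them.
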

\begin{proof}
    This follows from \cite[Lemma~3.D.2]{LivreDiaconis1988} and symmetry. (We emphasize that in \cite{LivreDiaconis1988}, the notation $r(\lambda)$ means $\chi^\lambda(\cT)$, while for us $r(\lambda) = n-\lambda_1$.)
\end{proof}
\begin{lemma}\label{lem: sumple bound on sums of dimensions}
    Let $n\geq 2$. Let $r\geq 1$. We have
    \begin{equation}
        \sum_{\lambda \vdash n \du r(\lambda) = r} d_\lambda \leq n^r.
    \end{equation}
\end{lemma}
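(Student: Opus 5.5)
We want $\sum_{\lambda\vdash n,\, r(\lambda)=r} d_\lambda \leq n^r$. The plan is to build each such $\lambda$ from its tail $\lambda^* \vdash r$ and then use the elementary bounds on dimensions and on the number of standard tableaux.

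\textbf{Step 1 (reduce to a sum over tails).} The map $\lambda\mapsto\lambda^*$ sends the diagrams with $r(\lambda)=r$ injectively into the partitions of $r$. Indeed, $\lambda$ is recovered as $(n-r,\lambda^*)$, which is a valid diagram exactly when $n-r\geq \lambda^*_1$.

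\textbf{Step 2 (bound each dimension).} For each such $\lambda$ I claim $d_\lambda \leq \binom{n}{r} d_{\lambda^*}$. A standard tableau of $\lambda$ is determined by two pieces of data. The first is the set of $r$ entries placed outside the first row, for which there are at most $\binom{n}{r}$ choices. The second is the relative order of these entries, which forms a standard tableau of shape $\lambda^*$ after relabelling. Rows $2,3,\ldots$ form the shape $\lambda^*$ shifted, and the standardness constraints among those rows are precisely those of $\lambda^*$. So the map from tableaux of $\lambda$ to these pairs is injective, which proves the claim.

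\textbf{Step 3 (sum over tails).} Combining the two steps gives
\begin{equation*}
\sum_{\lambda\vdash n \du r(\lambda)=r} d_\lambda \leq \binom{n}{r}\sum_{\mu\vdash r} d_\mu .
\end{equation*}
Now $\sum_{\mu\vdash r} d_\mu$ equals the number of involutions of $\kS_r$, by the RSK correspondence. This is at most $r!$; alternatively one can use Cauchy--Schwarz, $\sum d_\mu \leq \sqrt{p(r)\, r!}$, though the crude bound $r!$ is enough. Therefore the sum is at most $\binom{n}{r} r! = n^{\downarrow r} \leq n^r$.

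\textbf{Main obstacle.} The only nonroutine point is the injectivity in Step 2, namely checking that the entries outside the first row, relabelled increasingly, really do form a standard tableau of shape $\lambda^*$. They do, because the increasing relabelling preserves all order relations among those entries.

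The injection is in fact a bijection onto its image whenever $n-r \geq r$. When $n-r < r$ some tails $\mu$ are not reached at all (Step 1 fails to be surjective), but dropping those terms only helps the inequality. So no case distinction is needed.
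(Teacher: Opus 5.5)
Your proof is correct and follows the paper's argument. Both bound $d_\lambda \leq \binom{n}{r} d_{\lambda^*}$ and then use $\sum_{\mu \vdash r} d_\mu \leq r!$; the paper gets that from $d_\mu \leq d_\mu^2$ and $\sum_{\mu\vdash r} d_\mu^2 = r!$, you get it by counting involutions, and you additionally justify the first bound with an explicit injection the paper just asserts.
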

\begin{proof}
For $\lambda \vdash n$ such that $r(\lambda) = r$, we have $d_\lambda \leq \binom{n}{r}d_{\lambda^*} \leq n^r \frac{d_{\lambda^*}}{r!} \leq n^r \frac{(d_{\lambda^*})^2}{r!}$. The result then follows from the identity $\sum_{\mu \vdash r} d_\mu^2 = r!$.
\end{proof}
Denote the number of integer partitions of an integer $n$ by $p(n)$. 
\begin{lemma}\label{lem: bound on E r for r at least 2}
    Let $n\geq 2$. For any $t\geq 0$, we have
    \begin{equation}
        \sum_{r\geq 2} E_r^{(n)}(t) \leq \pg \sum_{r=2}^{\lf (n-1)/2\rf} n^r \pg 1 - \frac{2r(n-r+1)}{n(n-1)}\pd^{t} \pd + \frac{\sqrt{p(n)n!}}{2^{t+1}}.
    \end{equation}
\end{lemma}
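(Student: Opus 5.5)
We split $\sum_{r\geq 2} E_r^{(n)}(t)$ according to the size of $r$, and bound each $E_r^{(n)}(t)$ with the eigenvalue bound of Lemma~\ref{lem:classical bound on eigenvalues for transpositions} and the dimension bound of Lemma~\ref{lem: sumple bound on sums of dimensions}. By definition, $E_r^{(n)}(t)$ sums over $\lambda \in \widehat{\kS_n}^{**}$ with $\rsym(\lambda) = r$. Such $\lambda$ satisfy either $r(\lambda) = r$ or $r(\lambda') = r$ (possibly both). Since $d_{\lambda'} = d_\lambda$ and $|\chi^{\lambda'}(\cT)| = |\chi^\lambda(\cT)|$, the set of $\lambda$ with $\rsym(\lambda)=r$ is contained in $\{\lambda: r(\lambda)=r\}\cup\{\lambda': r(\lambda)=r\}$. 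Hence
\begin{equation}
E_r^{(n)}(t) \leq \frac{1}{2}\cdot 2 \sum_{\lambda \vdash n \du r(\lambda)=r} d_\lambda \max_{\rsym(\lambda)=r}|\chi^\lambda(\cT)|^t.
\end{equation}
The factor $2$ from the two sides cancels the prefactor $1/2$.

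For $2\leq r\leq \lf (n-1)/2\rf$, we have $r\leq n/2$. Part (a) of Lemma~\ref{lem:classical bound on eigenvalues for transpositions} then gives $|\chi^\lambda(\cT)|\leq 1-\frac{2r(n-r+1)}{n(n-1)}$. This quantity is nonnegative in this range, so raising it to the power $t$ is harmless. Combining with Lemma~\ref{lem: sumple bound on sums of dimensions} bounds these terms by $n^r\pg 1-\frac{2r(n-r+1)}{n(n-1)}\pd^t$, which gives the first sum. Part (a) is stated for even $n$. If $n$ is odd, I would invoke the Diaconis bound \cite[Lemma~3.D.2]{LivreDiaconis1988} directly, since that bound holds for all $n$.

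The remaining $\lambda$ have $\rsym(\lambda) > \lf (n-1)/2 \rf$, so $\rsym(\lambda)\geq n/2$. By part (b), $|\chi^\lambda(\cT)|\leq 1/2$ for these. Rather than counting them through levels, we bound their total contribution globally:
\begin{equation}
\frac12\sum_{\lambda} d_\lambda 2^{-t} \leq \frac{1}{2^{t+1}}\sum_{\lambda\vdash n} d_\lambda \leq \frac{1}{2^{t+1}}\sqrt{p(n)\sum_{\lambda\vdash n} d_\lambda^2} = \frac{\sqrt{p(n)n!}}{2^{t+1}},
\end{equation}
where the second inequality is Cauchy--Schwarz and the last step uses $\sum_{\lambda\vdash n} d_\lambda^2 = n!$. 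Adding the two contributions gives the statement.

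The main obstacle is the bookkeeping at the boundary. We must check that every $\lambda$ with $\rsym(\lambda)\geq 2$ falls into exactly one of the two regimes. This needs care when $n$ is even and $\rsym(\lambda) = n/2$: such $\lambda$ are excluded from the first sum, since $\lf (n-1)/2\rf < n/2$, and are therefore covered by part (b). We must also make sure the applicability of part (a) matches the range $r \leq \lf (n-1)/2 \rf$ used in the first sum.
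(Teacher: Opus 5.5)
Your proposal is correct and is essentially the paper's proof: you split at $\lfloor (n-1)/2\rfloor$, then use symmetry together with Lemma~\ref{lem: sumple bound on sums of dimensions} and part (a) of Lemma~\ref{lem:classical bound on eigenvalues for transpositions} for the low range, and part (b) with Cauchy--Schwarz and $\sum_{\lambda\vdash n} d_\lambda^2=n!$ for the rest. Your parity caveat is more careful than the paper, which applies part (a) for all $n$ without comment, but citing Diaconis directly is not quite enough: his estimate is an upper bound on $\chi^\lambda(\cT)$, not on $|\chi^\lambda(\cT)|$, so for odd $n$ you should also note that $\chi^\lambda(\cT)\geq 0$ whenever $\lambda_1\geq (n+1)/2$ (the content sum of $\lambda$ is at least $\binom{\lambda_1}{2}-\binom{n-\lambda_1+1}{2}\geq 0$).
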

\begin{proof}
    By symmetry, Lemma~\ref{lem: sumple bound on sums of dimensions}, and Lemma~\ref{lem:classical bound on eigenvalues for transpositions} (a), we have
\begin{equation}
\begin{split}
      \sum_{r = 2}^{\lf (n-1)/2\rf} E_r^{(n)}(t) & \leq \sum_{r=2}^{\lf (n-1)/2\rf} n^r \pg 1 - \frac{2r(n-r+1)}{n(n-1)}\pd^{t}.
\end{split}
\end{equation}
By Lemma~\ref{lem:classical bound on eigenvalues for transpositions} (b), the Cauchy--Schwarz inequality, and using that $\sum_{\lambda\vdash n}d_\lambda^2 = n!$, we have
\begin{equation}
    \frac{1}{2}\sum_{r\geq \lf (n+1)/2\rf}\sum_{\lambda \vdash n \du \rsym(\lambda) = r} d_\lambda |\chi^\lambda(\cT)|^t \leq  \frac{1}{2}\sum_{\lambda \vdash n} d_\lambda (1/2)^t \leq (1/2)^{t+1}\sqrt{p(n)n!}.
\end{equation}
The result follows.
\end{proof}

\subsection{Contribution of representations with (symmetrized) level 1}

Let us now study the contribution of the representations $\lambda$ with $\rsym(\lambda) = 1$. For these representations, we have to make an estimate which is both a lower and an upper bound.

We first show that the probability that a random permutation picked in $\kA_n$ or $\kS_n \backslash \kA_n$ has no fixed point is extremely close to $1/e$. The use of a determinant to count derangements below is standard, and the fact that the number of fixed points of a random permutation is very close in distribution to $\Poiss(1)$ was emphasized in \cite{DiaconisMiclo2023fixedpoints}.

\begin{lemma}\label{lem: borne factorielle permutations paires et impaires sans point fixe}
    Let $n\geq 2$. Let $X_n^{\textup{even}} \sim \Unif_{\kA_n}$ and $X_n^{\textup{odd}} \sim \Unif_{\kS_n\backslash\kA_n}$.  Then
\begin{equation}
    \bg \bbP(f_1(X_n^{\textup{even}}) = 0)-1/e\bd \leq \frac{1}{(n-1)!} \quad \text{ and } \quad \bg \bbP(f_1(X_n^{\textup{odd}}) = 0)-1/e\bd \leq \frac{1}{(n-1)!}.
\end{equation}
\end{lemma}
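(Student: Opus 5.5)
We will count derangements in $\kA_n$ and in $\kS_n\backslash\kA_n$ separately, by looking at the difference between even and odd derangements. Let $D_n$ be the number of derangements of $\kS_n$, and let $D_n^+$ and $D_n^-$ be the numbers of even and odd derangements, so that $D_n = D_n^+ + D_n^-$. The difference $D_n^+ - D_n^- = \sum_{\sigma \text{ derangement}} \sgn(\sigma)$ is the determinant of the $n\times n$ matrix $J-I$, where $J$ is the all-ones matrix: the Leibniz expansion only keeps permutations avoiding the diagonal, each with weight $\sgn(\sigma)$. The eigenvalues of $J-I$ are $n-1$ (once) and $-1$ (with multiplicity $n-1$), hence
\begin{equation}
    D_n^+ - D_n^- = (-1)^{n-1}(n-1).
\end{equation}

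For $D_n$ we use the inclusion–exclusion formula $D_n = n!\sum_{j=0}^n \frac{(-1)^j}{j!}$. It gives the standard alternating-series tail bound
\begin{equation}
    \bg \frac{D_n}{n!} - \frac1e\bd \le \frac{1}{(n+1)!}.
\end{equation}
Since $|\kA_n| = |\kS_n\backslash\kA_n| = n!/2$, we then have
\begin{equation}
    \bbP(f_1(X_n^{\textup{even}})=0) = \frac{D_n + (D_n^+-D_n^-)}{n!}, \qquad \bbP(f_1(X_n^{\textup{odd}})=0) = \frac{D_n - (D_n^+-D_n^-)}{n!}.
\end{equation}
By the triangle inequality, both quantities differ from $1/e$ by at most $\frac{1}{(n+1)!} + \frac{n-1}{n!}$.

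It remains to check the elementary inequality $\frac{1}{(n+1)!} + \frac{n-1}{n!} \le \frac{1}{(n-1)!}$. Multiplying by $n!$, this reads $\frac{1}{n+1} + n - 1 \le n$, which holds for all $n\ge 1$. This concludes the proof for every $n\ge 2$.

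The only step requiring care is the determinant identity. Checking that the Leibniz expansion of $\det(J-I)$ is exactly the signed derangement count, and computing the spectrum of $J-I$, is routine. The rest is standard estimates.
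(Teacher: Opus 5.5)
Your proof is correct and follows essentially the same route as the paper. The paper likewise combines the inclusion--exclusion formula for $D_n$ with $D_n^{\textup{even}}-D_n^{\textup{odd}}=\det(J-I)=(-1)^{n-1}(n-1)$, bounds the alternating tail by $\frac{1}{(n+1)!}$, and concludes via $\frac{1}{(n+1)!}+\frac{n-1}{n!}=\frac{n^2}{(n+1)!}\le\frac{1}{(n-1)!}$; you also write out the odd case explicitly, which the paper only handles by analogy.
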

\begin{proof}
Denote the number of derangements (i.e.\ fixed point free permutations) of $\kS_n$ by $D_n$. Denote the number of even and odd derangements of $\kS_n$ (i.e.\ of derangements that are in $\kA_n$ or $\kS_n \backslash \kA_n$) by $D_n^{\textup{even}}$ and $D_n^{\textup{odd}}$.
It is standard that
\begin{equation}
    D_n^{\textup{even}} + D_n^{\textup{odd}} = D_n = n!\sum_{i=0}^n\frac{(-1)^i}{i!} \quad \text{ and } \quad D_n^{\textup{even}} - D_n^{\textup{odd}} = (-1)^{n-1}(n-1).
\end{equation}
($D_n^{\textup{even}} - D_n^{\textup{odd}}$ is the determinant of the $n$ by $n$ matrix with 0's on the diagonal and 1's elsewhere.)
Hence we have 
\begin{equation}
    \bbP(f_1(X_n^{\textup{even}})=0) = \frac{2}{n!}D_n^{\textup{even}} = \sum_{i=0}^n\frac{(-1)^i}{i!} + \frac{(-1)^{n-1}(n-1)}{n!}
\end{equation}
and therefore
\begin{equation}
    \bg \bbP(f_1(X_n^{\textup{even}})=0) - 1/e\bd = \bg \bbP(f_1(X_n^{\textup{even}})=0) - \sum_{i=0}^\infty \frac{(-1)^i}{i!}\bd \leq \bg \sum_{i=n+1}^{\infty}\frac{(-1)^i}{i!}\bd  + \frac{(n-1)}{n!},
\end{equation}
so we finally obtain
\begin{equation}
    \bg \bbP(f_1(X_n^{\textup{even}})=0) - 1/e\bd \leq \frac{1}{(n+1)!} + \frac{(n-1)(n+1)}{(n+1)!} =\frac{n^2}{(n+1)!}\leq \frac{1}{(n-1)!}.
\end{equation}
The same arguments also give $\bbP(f_1(X_n^{\textup{odd}})=0) \leq 1/(n-1)!$.
\end{proof}

\begin{lemma}\label{lem: factorial approximation for the first representation and its conjugate}
    Let $n \geq 3$ and $t\geq 0$. Then
    \begin{equation}
        \bg M_1^{(n)}(t) - \frac{(n-1)}{e}\pg \frac{n-3}{n-1} \pd^t  \bd \leq \frac{1}{(n-2)!}.
    \end{equation}
\end{lemma}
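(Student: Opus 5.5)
We will compute $M_1^{(n)}(t)$ exactly in terms of the probability that a uniform element of the coset $\kE(\cT,t)$ is a derangement, and then apply Lemma~\ref{lem: borne factorielle permutations paires et impaires sans point fixe}. Write $q = \frac{n-3}{n-1}\in[0,1)$; this is valid since $n\geq 3$.

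\textbf{Identifying the terms and their eigenvalues.} The representations with $\rsym(\lambda)=1$ are $\lambda=(n-1,1)$ and its conjugate $\lambda'=(2,1^{n-2})$, both of dimension $n-1$. For every $\sigma$ we have $\ch^{(n-1,1)}(\sigma)=f_1(\sigma)-1$. By Remark~\ref{rk: sign change if transpose and odd}, $\ch^{\lambda'}(\sigma)=\sgn(\sigma)(f_1(\sigma)-1)$. Taking $\sigma\in\cT$ gives $\chi^{(n-1,1)}(\cT)=q$ and $\chi^{\lambda'}(\cT)=-q$. Hence, for every $\sigma$,
\begin{equation*}
\sum_{\rsym(\lambda)=1}\frac{d_\lambda}{n!}\chi^\lambda(\cT)^t\ch^\lambda(\sigma)=\frac{n-1}{n!}\,q^t\,(f_1(\sigma)-1)\pg 1+(-1)^t\sgn(\sigma)\pd .
\end{equation*}

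\textbf{Restricting to the coset.} Since $\cT$ is odd, every $\sigma\in\kE(\cT,t)$ satisfies $\sgn(\sigma)=(-1)^t$, so the bracket equals $2$. Using $|\kE(\cT,t)|=n!/2$, we get
\begin{equation*}
M_1^{(n)}(t)=\frac{1}{2}\sum_{\sigma\in\kE(\cT,t)}\frac{2(n-1)}{n!}q^t\,|f_1(\sigma)-1| = \frac{(n-1)q^t}{2}\,\bbE\cg |f_1(X)-1|\cd,
\end{equation*}
where $X\sim\Unif_{\kE(\cT,t)}$. The factor $\frac12$ in front comes from $\frac{1}{n!}\cdot\frac{n!}{2}$, and it must be tracked carefully here.

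\textbf{Key step: evaluating the absolute moment.} Since $|x-1|=(x-1)+2\mathds{1}_{x=0}$ on nonnegative integers, we have
\begin{equation*}
\bbE|f_1(X)-1|=\bbE[f_1(X)-1]+2\,\bbP(f_1(X)=0).
\end{equation*}
Next,
\begin{equation*}
\sum_{\sigma\in\kE}\ch^{(n-1,1)}(\sigma)=\frac12\sum_{\sigma\in\kS_n}\pg1\pm\sgn(\sigma)\pd\ch^{(n-1,1)}(\sigma).
\end{equation*}
By orthogonality of characters, the right-hand side vanishes because $(n-1,1)$ is neither the trivial nor the sign representation when $n\geq3$. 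Therefore $\bbE[f_1(X)-1]=0$, which gives the exact identity
\begin{equation*}
M_1^{(n)}(t)=(n-1)\,q^t\,\bbP(f_1(X)=0).
\end{equation*}

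\textbf{Conclusion.} Lemma~\ref{lem: borne factorielle permutations paires et impaires sans point fixe} gives $|\bbP(f_1(X)=0)-1/e|\leq 1/(n-1)!$, whichever coset $X$ is uniform on. Combining this with the identity above and $0\leq q^t\leq 1$,
\begin{equation*}
\bg M_1^{(n)}(t)-\frac{n-1}{e}q^t\bd\leq \frac{n-1}{(n-1)!}\,q^t\leq\frac{1}{(n-2)!}.
\end{equation*}
The only delicate point is the vanishing of the mean of $f_1-1$ on a single coset, which is exactly where the hypothesis $n\geq3$ is used.
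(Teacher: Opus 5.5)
Your proof is correct and follows the same route as the paper. You combine $(n-1,1)$ with its conjugate to absorb the factor $\tfrac12$ and reduce to $\bbE|f_1(X)-1|$ on the coset. You then use $\bbE[f_1(X)]=1$, which you justify by orthogonality against the trivial and sign characters where the paper only asserts it, to get $M_1^{(n)}(t)=(n-1)q^t\,\bbP(f_1(X)=0)$, and you conclude with Lemma~\ref{lem: borne factorielle permutations paires et impaires sans point fixe}. One harmless edge case, shared with the paper: for $n=3$ the diagram $(2,1)$ is self-conjugate, so only one representation has $\rsym(\lambda)=1$ and your exact identity is off by a factor $2$ at $t=0$ (it gives $4/3$ instead of $2/3$). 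The inequality still holds at $n=3$, since both sides vanish for $t\geq 1$ and $|2/3-2/e|\leq 1$ for $t=0$.
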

\begin{proof}
    For $\sigma \in \kE(\cT, t)$ and any $\lambda \in \widehat{\kS_n}$, we have $\chi^{\lambda'}(\cT)^t \ch^{\lambda'}(\sigma) = \chi^\lambda(\cT)^t \ch^\lambda(\sigma)$ and $d_{\lambda'} = d_{\lambda}$. Therefore the representations $(n-1,1)$ and $(2, 1^{n-2})$ have the same contribution, which cancels the factor $1/2$. Using that $d_{(n-1,1)} = n-1$ and $\ch^{(n-1,1)}(\sigma) = f_1(\sigma) - 1$ for any $\sigma$, we deduce that
    \begin{equation}
    \begin{split}
         M_1^{(n)}(t) = \sum_{\sigma \in \mathfrak{E}(\cT,t)} \bg \frac{(n-1)}{n!} \chi^\lambda(\cT)^t (f_1(\sigma) - 1)\bd  = \frac{n-1}{2}|\chi^\lambda(\cT)|^t \frac{2}{n!}\sum_{\sigma \in \mathfrak{E}(\cT,t)}|f_1(\sigma) - 1|.
    \end{split}
    \end{equation}
Moreover, we have, letting $X \sim \Unif_{\mathfrak{E}(\cT,t)}$, we have $\bbE[f_1(X)] = 1$ (this uses that $n\geq 3$) so
\begin{equation}
    \frac{2}{n!}\sum_{\sigma \in \mathfrak{E}(\cT,t)}|f_1(\sigma) - 1| = 2\bbP(f_1(X) = 0).
\end{equation}
We therefore obtain
\begin{equation}
       \bg M_1^{(n)}(t)- \frac{(n-1)}{e}\bg \chi^\lambda(\cT) \bd^t  \bd
     = (n-1)\bg \chi^\lambda(\cT)\bd^t \bg \bbP(f_1(X) = 0) - 1/e \bd.
    \end{equation}
Applying Lemma~\ref{lem: borne factorielle permutations paires et impaires sans point fixe} and using that $\bg \chi^\lambda(\cT) \bd = \frac{n-3}{n-1}\leq 1$ concludes the proof.
\end{proof}

\subsection{General approximation}

\begin{proposition}\label{prop: general approximation bound for numerics}
    Let $n\geq 3$ and $t\geq 0$. Then
    \begin{equation}
         \bg \textup{d}^{(n)}(t) - \frac{(n-1)}{e}\pg \frac{n-3}{n-1} \pd^t\bd \leq \varepsilon^{(n)}(t),
    \end{equation}
where 
\begin{equation}
     \varepsilon^{(n)}(t) = \pg \sum_{r=2}^{\lf (n-1)/2\rf} n^r \pg 1 - \frac{2r(n-r+1)}{n(n-1)}\pd^{t}\pd+ \frac{\sqrt{p(n)n!}}{2^{t+1}} + \frac{1}{(n-2)!}.
\end{equation}
\end{proposition}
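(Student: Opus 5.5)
The proof is a direct assembly of the three estimates established just above. We start from \eqref{eq: distance rewritten with M and E}, which is Lemma~\ref{lem:approximation lemma a bit more general prelim} applied with $B = \widehat{\kS_n}^{**}$, $A = \ag \lambda \du \rsym(\lambda) = 1\ad$ and $u_\lambda = \chi^\lambda(\cT)^t$. It gives
\begin{equation*}
\bg \textup{d}^{(n)}(t) - M_1^{(n)}(t)\bd \leq \sum_{r\geq 2} E_r^{(n)}(t).
\end{equation*}
The factor $\frac12$ in the definition of $E_r^{(n)}$ is consistent with the $\frac12$ in the total variation formula \eqref{eq: def distance to coset stationarity}. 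Here we use that $\frac{1}{|G|}\sum_{g\in H}|\ch^\lambda(g)|\leq 1$ with $|G| = n!$, and that the prefactor $d_\lambda/n!$ matches.

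Next, the triangle inequality yields
\begin{equation*}
\bg \textup{d}^{(n)}(t) - \frac{n-1}{e}\pg\frac{n-3}{n-1}\pd^t\bd \leq \bg \textup{d}^{(n)}(t) - M_1^{(n)}(t)\bd + \bg M_1^{(n)}(t) - \frac{n-1}{e}\pg\frac{n-3}{n-1}\pd^t\bd.
\end{equation*}
We bound the first term by Lemma~\ref{lem: bound on E r for r at least 2}, which produces exactly the sum over $2\leq r\leq \lf (n-1)/2\rf$ and the term $\sqrt{p(n)n!}/2^{t+1}$. We bound the second term by Lemma~\ref{lem: factorial approximation for the first representation and its conjugate}, which produces $1/(n-2)!$. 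Adding these bounds gives $\varepsilon^{(n)}(t)$.

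The only point needing care is bookkeeping. We must check that every $\lambda$ with $\rsym(\lambda)\geq 2$ is counted in Lemma~\ref{lem: bound on E r for r at least 2}, including the middle range where the symmetric level is close to $n/2$. That lemma was stated with $\lf (n-1)/2\rf$ and $\lf (n+1)/2\rf$ cutoffs, and Lemma~\ref{lem:classical bound on eigenvalues for transpositions} was stated for even $n$. This causes no trouble for $n=52$, which is the case used, but for general $n\geq 3$ one should note that the ranges cover all $r\geq 2$. We also check that the level-one contribution is exactly $M_1^{(n)}$, and that its hypothesis $n\geq 3$ matches ours. Beyond these checks the argument is immediate.
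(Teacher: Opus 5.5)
Your proposal is correct and follows exactly the paper's proof: you start from \eqref{eq: distance rewritten with M and E} and apply the triangle inequality, with Lemma~\ref{lem: bound on E r for r at least 2} bounding the contribution of $\rsym\geq 2$ and Lemma~\ref{lem: factorial approximation for the first representation and its conjugate} bounding the level-one term $M_1^{(n)}(t)$. Your caveat that Lemma~\ref{lem:classical bound on eigenvalues for transpositions} is stated only for even $n$ is about the paper's statements rather than your assembly, since you invoke Lemma~\ref{lem: bound on E r for r at least 2} as stated for all $n\geq 2$; it is also harmless, because those eigenvalue bounds hold for odd $n$ too.
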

\begin{proof}
    This follows from \eqref{eq: distance rewritten with M and E}, Lemmas~\ref{lem: bound on E r for r at least 2} and~\ref{lem: factorial approximation for the first representation and its conjugate}, and the triangle inequality.
\end{proof}

\subsection{Preliminaries for numerics}
\begin{lemma}\label{lem: numerics using OEIS tables}
    The following holds.
\begin{enumerate}
    \item $50! \geq 10^{64}$.
    \item $\sqrt{p(52)52!} \leq  5\cdot 10^{36}$.
\end{enumerate}
\end{lemma}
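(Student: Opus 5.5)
Both items are purely numerical facts, and the plan is to verify them with elementary exact arithmetic, using known values of $50!$ and of the partition function rather than any structural argument.

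For (a), we will use the standard identity $\log_{10}(m!) = \sum_{j=1}^{m}\log_{10} j$, or more simply recall the exact value $50! \approx 3.0414\cdot 10^{64}$ (OEIS A000142). This gives $50!\geq 3\cdot 10^{64}\geq 10^{64}$. To make this self-contained without a computer, we can invoke the Stirling lower bound $m!\geq \sqrt{2\pi m}\,(m/e)^m$ with $m=50$. Then
\begin{equation*}
\log_{10}\pg\sqrt{100\pi}\,(50/e)^{50}\pd = \tfrac12\log_{10}(100\pi) + 50\pg\log_{10}50-\log_{10}e\pd \approx 1.249 + 50(1.69897-0.43429) \approx 64.48,
\end{equation*}
which is comfortably above $64$. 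The only care needed is to use lower bounds on $\log_{10}50$ and upper bounds on $\log_{10}e$ with enough digits, which is routine.

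For (b), we will use the tabulated value $p(52)=281589$ (OEIS A000041), so that $p(52)\leq 2.82\cdot 10^{5}$. For $52!$ we use either the known value $52!\approx 8.0658\cdot 10^{67}$, or, building on (a), the bound $52! = 52\cdot 51\cdot 50!$ together with an upper bound on $50!$, for instance $50!\leq 3.05\cdot 10^{64}$ from Stirling's upper bound $m!\leq \sqrt{2\pi m}(m/e)^m e^{1/(12m)}$. Either route gives $52!\leq 8.07\cdot 10^{67}$. Hence
\begin{equation*}
p(52)\,52! \leq 2.82\cdot 10^{5}\cdot 8.07\cdot 10^{67} \leq 2.28\cdot 10^{73},
\end{equation*}
and therefore $\sqrt{p(52)52!}\leq \sqrt{22.8}\cdot 10^{36}\leq 4.8\cdot 10^{36}\leq 5\cdot 10^{36}$.

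The only real obstacle is making the decimal estimates rigorous rather than merely approximate. The margins are wide in (a) (a factor of $3$) but fairly thin in (b) (about $4\%$). So in (b) we will carry three significant digits with outward rounding at each step, and rely on the exact integer values of $p(52)$ and $52!$ from the OEIS tables, as the lemma's label suggests.
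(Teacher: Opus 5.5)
Your proposal is correct and follows essentially the same route as the paper: it reads off the exact values of $50!$, $52!$ and $p(52)=281589$ from OEIS (A000142 and A000041) and does the arithmetic, and the true value is $\sqrt{p(52)\,52!}\approx 4.77\cdot 10^{36}$. One harmless slip is in your optional Stirling route: $50!\leq 3.05\cdot 10^{64}$ only gives $52!\leq 2652\cdot 3.05\cdot 10^{64}\approx 8.09\cdot 10^{67}$, not $8.07\cdot 10^{67}$, but $\sqrt{2.82\cdot 10^{5}\cdot 8.09\cdot 10^{67}}\approx 4.78\cdot 10^{36}$ still suffices (and keeping the $e^{1/600}$ factor actually gives $50!\leq 3.042\cdot 10^{64}$, which does yield $8.07\cdot 10^{67}$).
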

\begin{proof}
The tables of values for the sequences A000142 and A000041 on OEIS give the following exact values (to which we add a space every 10 digits):
\begin{itemize}
    \item $50! = 3041409320\;1713378043\;6126081660\;6476884437\;7641568960\;5120000000\;00000$,
    \item $52! = 8065817517\;0943878571\;6606368564\;0376697528\;9505440883\;2778240000\;00000000$,
    \item $p(52) = 281589$.
\end{itemize}
The results follow.
\end{proof}

\subsection{Numerics for transpositions on a deck of 52 cards}
We give the values of $m_n(t) = \frac{(n-1)}{e}\pg \frac{n-3}{n-1} \pd^{t}$ for $n=52$ and different relevant values of $t$. We note that for $n=52$, $\frac{n-3}{n-1} = \frac{49}{51}$, so $m_n(t+1)$ is about $4\%$ less than $m_n(t)$.
\begin{lemma}\label{lem: numerics for M 1 for 52 cards}
    We have the following approximations.
\begin{enumerate}
    \item $m_{52}(186) \approx  1.101\cdot 10^{-2}$, $m_{52}(187) \approx  1.058\cdot 10^{-2}$, $m_{52}(188) \approx  1.016\cdot 10^{-2}$, $m_{52}(189) \approx  (1- 0.024)\cdot 10^{-2}$, $m_{52}(190) \approx  (1- 0.062)\cdot 10^{-2}$, $m_{52}(191) \approx  (1- 0.099)\cdot 10^{-2}$.
    \item $m_{52}(245) \approx  1.039\cdot 10^{-3}$, $m_{52}(246) \approx  (1-0.002)\cdot 10^{-3}$, $m_{52}(247) \approx  (1-0.041)\cdot 10^{-3}$.
    \item $m_{52}(303) \approx  1.021\cdot 10^{-4}$, $m_{52}(304) \approx  (1- 0.019)\cdot 10^{-4}$.
\end{enumerate}
\end{lemma}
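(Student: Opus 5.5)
The plan is a direct, elementary computation. There is nothing structural to prove here: each item is a rounded numerical value of the explicit quantity
\begin{equation*}
m_{52}(t) = \frac{51}{e}\pg\frac{49}{51}\pd^{t},
\end{equation*}
and the claimed digits only need to be certified with enough precision to justify the rounding. Since the later use of this lemma is to locate $\tmix(\varepsilon)$ via Proposition~\ref{prop: general approximation bound for numerics}, what really matters is knowing on which side of $10^{-2}$, $10^{-3}$, $10^{-4}$ each value lies. I will therefore aim for a relative accuracy of about $10^{-5}$, which is comfortably more than enough.

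The key steps, in order, are as follows.

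\textbf{Step 1: pass to logarithms.} Write
\begin{equation*}
\ln m_{52}(t) = \ln 51 - 1 - t\,\ln\frac{51}{49},
\end{equation*}
so that all items reduce to evaluating two constants accurately.

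\textbf{Step 2: evaluate $\ln(51/49)$.} Write $51/49 = (1+x)/(1-x)$ with $x = 1/50$ and use the rapidly converging series
\begin{equation*}
\ln\frac{1+x}{1-x} = 2\pg x + \frac{x^3}{3} + \frac{x^5}{5} + \cdots\pd .
\end{equation*}
Already three terms give $\ln(51/49) = 0.04000533\ldots$, and the tail is bounded by a geometric series of ratio $x^2 = 1/2500$, so the error is below $10^{-12}$.

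\textbf{Step 3: evaluate $\ln 51$.} Use $\ln 51 = \ln 3 + \ln 17$, or equivalently $\ln 51 = \ln 50 + \ln(1+1/50)$, with standard rigorous bounds on $\ln 2$, $\ln 3$, $\ln 5$. This gives $\ln 51 - 1 \approx 2.931826$ to about $10^{-7}$.

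\textbf{Step 4: compute the values.} For $t=186$ we get $\ln m_{52}(186) \approx 2.931826 - 7.440992 = -4.509166$, hence $m_{52}(186) \approx 1.101\cdot 10^{-2}$. The other values follow either the same way or through the recurrence $m_{52}(t+1) = \frac{49}{51}m_{52}(t)$, which explains the roughly $4\%$ decrease per step noted before the lemma.

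The only point requiring care, and the main obstacle, is the propagation of error. The error on $\ln(51/49)$ is multiplied by $t \leq 304$, so it must be well below $10^{-7}$ to certify the third significant digit. Step 2 delivers this easily, so the argument reduces to careful bookkeeping.

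The delicate items are those whose value sits close to a threshold, namely $m_{52}(246) \approx (1-0.002)\cdot 10^{-3}$ and $m_{52}(188)$, $m_{52}(189)$. For these I will record explicit two-sided bounds on the exponent, for instance $\ln m_{52}(246) \in [\ln(10^{-3}) - 0.0025,\ \ln(10^{-3}) - 0.0015]$, rather than rounded values. This guarantees the sign of $m_{52}(t) - 10^{-j}$ needed later, where the resulting margin is compared against $\varepsilon^{(52)}(t)$.

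Optionally, each value can be double-checked numerically by a computer, consistent with the paper's stated use of computers only for elementary computations.
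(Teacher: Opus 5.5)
Your proposal is correct and is essentially the paper's argument. The paper's proof simply evaluates $m_{52}(t) = \frac{51}{e}(49/51)^{t}$ at the listed times, and your logarithmic evaluation with explicit error control is a more carefully certified version of that same computation. Your intermediate values also check out: $\ln(51/49) \approx 0.0400053346$, $\ln 51 - 1 \approx 2.9318256$, and $\ln m_{52}(246) \approx \ln(10^{-3}) - 0.00173$, which lies inside the window you propose.
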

\begin{proof}
    These numerics are obtained by evaluating $m_{52}(t) = \frac{51}{e}\pg \frac{49}{51} \pd^{t}$ for different values of $t$.
\end{proof}

Let us also provide bounds on the error term $\varepsilon^{(n)}(t)$.
\begin{lemma}\label{lem: numerics for the error term for 52 cards}
    Let $n=52$. We have the following bounds.
\begin{enumerate}
    \item For any $t\geq 186$, we have $\varepsilon^{(n)}(t) \leq 9.6\cdot 10^{-4}$. 
     \item For any $t\geq 245$, we have $\varepsilon^{(n)}(t) \leq 10^{-5}$. 
     \item For any $t\geq 303$, we have $\varepsilon^{(n)}(t) \leq 10^{-7}$.
\end{enumerate}
\end{lemma}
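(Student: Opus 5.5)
Since every summand of $\varepsilon^{(n)}(t)$ is non-increasing in $t$ (each base $1-\frac{2r(n-r+1)}{n(n-1)}$ lies in $[0,1)$ for $2\leq r\leq \lf (n-1)/2\rf$, and $2^{-(t+1)}$ decreases), it suffices to bound $\varepsilon^{(52)}(t)$ at the three values $t=186$, $t=245$ and $t=303$. By Lemma~\ref{lem: numerics using OEIS tables}, the last two terms are negligible. Indeed $\frac{1}{50!}\leq 10^{-64}$. Also $\frac{\sqrt{p(52)52!}}{2^{t+1}}\leq 5\cdot 10^{36}\cdot 2^{-187}\leq 10^{-19}$ for $t\geq 186$, since $2^{187}\geq 10^{56}$. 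So the whole task is to bound
\begin{equation*}
    \Sigma(t) := \sum_{r=2}^{25} 52^r \pg 1 - \frac{2r(53-r)}{2652}\pd^{t}.
\end{equation*}

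First I will handle the two dominant terms $r=2$ and $r=3$ by direct evaluation. For $r=2$ the base is $1-\frac{204}{2652}=\frac{12}{13}$, so the term is $2704\,(12/13)^t$. This gives about $9.2\cdot 10^{-4}$ at $t=186$, about $8.4\cdot 10^{-6}$ at $t=245$, and about $8\cdot 10^{-8}$ at $t=303$. For $r=3$ the base is $1-\frac{300}{2652}=\frac{197}{221}$, and the term $52^3(197/221)^t$ is of order $3\cdot 10^{-5}$ at $t=186$ and far smaller at the other two times.

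The remaining terms $4\leq r\leq 25$ I will control by a convexity argument. Using $1-x\leq e^{-x}$, the logarithm of the $r$-th term is at most
\begin{equation*}
    g(r) = r\ln 52 - \frac{2t\,r(53-r)}{2652}.
\end{equation*}
This is a quadratic in $r$ with positive leading coefficient $2t/2652$, hence convex. Therefore each term with $4\leq r\leq 25$ is at most $\max(e^{g(4)},e^{g(25)})$, and the sum over these 22 terms is at most $22\max(e^{g(4)},e^{g(25)})$.

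At $r=25$ the exponent is hugely negative: $25\ln 52 - t\cdot\frac{1400}{2652}\leq 99-98=1$ would already be too crude at $t=186$, so I will use the exact base $0.4721$ instead. This gives $52^{25}\cdot 0.4721^{186}\approx 10^{42.9-60.7}$, which is negligible. At $r=4$, $e^{g(4)}$ is of order $52^4 e^{-186\cdot 0.1478}\approx 10^{-5.1}$ at $t=186$. To have margin I will instead evaluate the exact base $1-\frac{392}{2652}$, which yields roughly $10^{-6}$ after multiplying by $22$. If that is not small enough, I will evaluate $r=4,5$ exactly and apply convexity only on $[6,25]$.

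Summing all contributions gives $\varepsilon^{(52)}(186)\leq 9.6\cdot 10^{-4}$, and similarly $\leq 10^{-5}$ at $t=245$ and $\leq 10^{-7}$ at $t=303$. The main obstacle is the tightness of part (a). There the $r=2$ term alone is about $9.2\cdot 10^{-4}$, so the $r=3$ term and the convexity tail must be computed with enough precision (exact bases, not $e^{-x}$) to stay within the remaining $\approx 4\cdot 10^{-5}$. I will do those evaluations numerically with explicit rounding upward. Part (c) is similarly tight, since the $r=2$ term is $\approx 8\cdot 10^{-8}$ against the bound $10^{-7}$.
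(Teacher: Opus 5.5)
Your reduction is sound and matches the paper: the summands are non-increasing in $t$, and the last two terms are at most $10^{-19}$. The paper then simply evaluates the 24-term sum numerically at $t=186$, and similarly at $245$ and $303$. Your semi-analytic tail bound works for (b) and (c), but for (a) there is a genuine gap in the convexity step.

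The convexity you prove is for $g$, which comes from $1-x\le e^{-x}$. At $t=186$ this majorant is useless at both ends. You get $g(25)=25\ln 52-186\cdot\frac{1400}{2652}\approx 0.59$, so $e^{g(25)}\approx 1.8$. Also $22e^{g(4)}\approx 1.8\cdot 10^{-4}$. Substituting the exact value $52^{25}\cdot 0.4721^{186}$ at the endpoint is not justified by convexity of $g$. The intermediate values $e^{g(r)}$ are not controlled by that exact value; for instance $e^{g(24)}\approx 0.06$. What you actually need is convexity in $r$ of the exact log-summand $h(r)=r\ln 52+t\ln\bigl(1-\frac{2r(53-r)}{2652}\bigr)$. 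This does hold on $[2,25]$: a short computation gives $h''(r)>0$ if and only if $2r^2-106r+157<0$, and the roots of that quadratic are about $1.53$ and $51.5$.

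Even with this repair, the numbers in your first plan do not close. At $t=186$, the $r=2$ term is $\approx 9.25\cdot 10^{-4}$ and the $r=3$ term is $\approx 2.82\cdot 10^{-5}$. That leaves only $\approx 6.5\cdot 10^{-6}$ of slack. The exact $r=4$ term is $\approx 8.8\cdot 10^{-7}$, so $22$ times it is $\approx 1.9\cdot 10^{-5}$, not roughly $10^{-6}$, and it exceeds the slack.

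Your fallback does work once exact-term convexity is in place. The exact $r=4,5$ terms are $\approx 8.78\cdot 10^{-7}$ and $\approx 2.8\cdot 10^{-8}$. The remaining range is bounded by $20\cdot 52^6(174/221)^{186}\approx 1.9\cdot 10^{-8}$. The total is then $\approx 9.544\cdot 10^{-4}\le 9.6\cdot 10^{-4}$, consistent with the paper's $9.55\cdot 10^{-4}$.

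Finally, the $r=3$ base is $1-\frac{300}{2652}=\frac{196}{221}$, not $\frac{197}{221}$. Your stated magnitude $3\cdot 10^{-5}$ corresponds to $196/221$. Taken literally, $197/221$ gives an $r=3$ term of $\approx 7.3\cdot 10^{-5}$, and then (a) fails. Since this is a finite computation, the cleanest fix is to evaluate all 24 summands directly, as the paper does.
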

\begin{proof}
Since $2^{10} \geq 10^{3}$, for any $t\geq 186$, we have $2^{-(t+1)} \leq 2^{-187} =  8\cdot 2^{-190} \leq 8\cdot 10^{-57}\leq 10^{-56}$. Combining this with Lemma~\ref{lem: numerics using OEIS tables} we get
\begin{equation}
    \frac{\sqrt{p(n)n!}}{2^{t+1}} + \frac{1}{(n-2)!} \leq (5\cdot 10^{36})\cdot 10^{-56} + 10^{-64} \leq 10^{-19},
    \end{equation}
which is so small that we may use it for (a), (b), and (c). For (a) by monotonicity we have that
 for $t\geq 186$,
\begin{equation}
    \sum_{r=2}^{25} n^r \pg 1 - \frac{2r(n-r+1)}{n(n-1)}\pd^{t}\leq \sum_{r=2}^{25} n^r \pg 1 - \frac{2r(n-r+1)}{n(n-1)}\pd^{186}\leq 9.55\cdot 10^{-4},
\end{equation}
where we asked \verb|N[Sum[(52^r)*(1 - 2*r*(52-r+1)/(52*51))^186,{r,2,25}]]| to Wolfram Alpha to obtain the numerical bound. We deduce that for $t\geq 186$ we have $\varepsilon^{(n)}(t) \leq 9.55\cdot 10^{-4} +10^{-19} \leq 9.6\cdot 10^{-4}$, proving (a). We obtain the bounds for (b) and (c) in a similar way.
\end{proof}

\begin{proof}[Proof of Proposition~\ref{prop: numerics for transpositions for n=52}]
    Combining Proposition~\ref{prop: general approximation bound for numerics} with the numerics from Lemmas~\ref{lem: numerics for M 1 for 52 cards} and~\ref{lem: numerics for the error term for 52 cards}, we obtain that $\textup{d}^{(n)}(186)>10^{-2}>\textup{d}^{(n)}(191)$, $\textup{d}^{(n)}(245)>10^{-3}>\textup{d}^{(n)}(247)$, and $\textup{d}^{(n)}(303)>10^{-4}>\textup{d}^{(n)}(304)$. The results follow.
\end{proof}

\section*{Acknowledgements}

We are grateful to Nathanaël Berestycki, Persi Diaconis, and Valentin Féray, for stimulating conversations.
          
\footnotesize{
\bibliographystyle{alpha}
\bibliography{bibliographieLucas}}
\end{document}